\documentclass[11pt,a4paper]{amsart}

\usepackage{amsmath}
\usepackage{amsfonts}
\usepackage{graphicx}
\usepackage{framed}
\usepackage{amssymb,cancel}
\usepackage{xcolor}
\usepackage{esint}
\usepackage{comment}

\usepackage{etex}
\usepackage{latexsym}
\usepackage[english]{babel}
\usepackage[utf8x]{inputenc}

\def \R {\mathbb{R}}

\def \de {\partial}

\usepackage{amsthm}
\theoremstyle{definition}
\newtheorem{definition}{Definition}[section]

\newtheorem{remark}[definition]{Remark}

\theoremstyle{plain}
\newtheorem{theorem}[definition]{Theorem}
\newtheorem{proposition}[definition]{Proposition}
\newtheorem{lemma}[definition]{Lemma}
\newtheorem{corollary}[definition]{Corollary}
\newcommand{\Cemb}{C_{\rm emb}}
\newcommand{\Sr}{S_r}
\newcommand{\Es}{\mathcal E_s}
\newcommand{\norm}[1]{\left\lVert #1\right\rVert}

\numberwithin{equation}{section}
\usepackage[colorlinks=true,urlcolor=blue,
citecolor=red,linkcolor=blue,linktocpage,pdfpagelabels,
bookmarksnumbered,bookmarksopen]{hyperref}
\usepackage[hyperpageref]{backref}

\makeatletter
\@namedef{subjclassname@2020}{\textup{2020} Mathematics Subject Classification}
\makeatother

\begin{document}

\title[Local-nonlocal Lane-Emden equation]{Lane-Emden equation for the superposition of a local and nonlocal operator with the ``wrong sign''}

  \author[S.\,Biagi]{Stefano Biagi}
 %\author[S.\,Dipierro]{Serena Dipierro}
 %\author[E.\,Valdinoci]{Enrico Valdinoci}
 \author[E.\,Vecchi]{Eugenio Vecchi}
 
 \address[S.\,Biagi]{Dipartimento di Matematica
 	\newline\indent Politecnico di Milano \newline\indent
 	Via Bonardi 9, 20133 Milano, Italy}
 \email{stefano.biagi@polimi.it}
 
% \address[S.\,Dipierro]{Department of Mathematics and Statistics
% 	\newline\indent University of Western Australia \newline\indent
% 	35 Stirling Highway, WA 6009 Crawley, Australia}
% \email{serena.dipierro@uwa.edu.au}
% 
% \address[E.\,Valdinoci]{Department of Mathematics and Statistics
% 	\newline\indent University of Western Australia \newline\indent
% 	35 Stirling Highway, WA 6009 Crawley, Australia}
% \email{enrico.valdinoci@uwa.edu.au}
 
 \address[E.\,Vecchi]{Dipartimento di Matematica
 	\newline\indent Università di Bologna \newline\indent
 	Piazza di Porta San Donato 5, 40126 Bologna, Italy}
 \email{eugenio.vecchi2@unibo.it}

\subjclass[2020]
{35B33, 35R11, 35A15, 35A01}

\keywords{Operators of mixed order, Liouville property, Sobolev critical exponent, Serrin exponent, existence theory.}

%\thanks{The Authors are
%	members of the {\em Gruppo Nazionale per
%	l'Analisi Ma\-te\-ma\-ti\-ca, la Probabilit\`a e le loro Applicazioni}
%	(GNAMPA) of the {\em Istituto Nazionale di Alta Matematica} (INdAM). S.B. and E.V. are partially supported by the
%	Indam-GNAMPA project {\em Esistenza, regolarità e studio delle proprietà qualitative per problemi nonlineari}.\\
%	The Authors thank B. Ruffini for the fruitful discussions on the topics of \cite{MMR}.}
	
\date{\today}

\begin{abstract} 
	We consider the Lane-Emden equation associated with the operator $	-\Delta  - \gamma \, (-\Delta)^s$ (for positive $\gamma$). First we consider the equation in the entire space and we prove that 
       no Serrin-type critical exponent arises in the $H^1(\mathbb R^n)$ framework.
	 Secondly we consider the problem settled in an open, bounded and connected set and we prove the existence of a positive solution for sufficiently small $s$.
\end{abstract}

\maketitle

\section{Introduction} 
We consider either the following problem
\begin{align}\label{eq:Problem_Spazio}
	\left\{\begin{array}{rl}
		-\Delta u - \gamma \, (-\Delta)^s u = |u|^{r-2}u & \textrm{ in } \mathbb{R}^n,\\
		u\not\equiv0 & \textrm{ in } \mathbb{R}^n,		
	\end{array}\right.
\end{align}
\noindent or its version settled in $\Omega \subset \mathbb{R}^n$ (with $n \geq 3$) being an open and bounded set with Lipschitz boundary $\partial \Omega$,
\begin{align}\label{eq:Problem}
	\left\{\begin{array}{rl}
		-\Delta u - \gamma \, (-\Delta)^s u = |u|^{r-2}u & \textrm{ in } \Omega,\\
		u\not\equiv0 & \textrm{ in } \Omega,\\
		u= 0 & \textrm{ in } \mathbb{R}^{n} \setminus \Omega.
	\end{array}\right.
\end{align}
Throughout the paper, we strengthen the regularity assumptions on $\partial\Omega$
whenever needed. The operator $(-\Delta)^s$ is the fractional Laplacian,
defined, for $s\in(0,1)$ and $u$ smooth enough, by
\[
 (-\Delta)^s u(x):=
 \int_{\mathbb R^n}
 \frac{2u(x)-u(x+y)-u(x-y)}{|y|^{n+2s}}\,dy.
\]
In the whole-space problem, $\gamma>0$ is arbitrary and no upper restriction
on the power exponent will be imposed. In the bounded-domain variational
problem, i\-n\-stead, we assume
\[
 0<\gamma<C_{\rm emb}(s)\qquad\text{and}\qquad 1<r\le 2^*:=\frac{2n}{n-2},
\]
where $C_{\rm emb}(s)$ is the optimal constant in the inequality
\[
 C_{\rm emb}(s)[u]_s^2\le \|\nabla u\|_{L^2(\Omega)}^2,
 \qquad u\in H_0^1(\Omega),
\]
see, precisely, \eqref{eq:Cemb}. Concerning the constant $C_{\rm emb}(s)$, we anticipate here the terminology
that will be used throughout the paper. As shown in Section~2,
$C_{\rm emb}(s)$ is the smallest value of $\lambda$ for which the generalized
eigenvalue problem
\[
 \begin{cases}
 -\Delta\Phi=\lambda(-\Delta)^s\Phi & \text{in }\Omega,\\
 \Phi=0 & \text{in }\mathbb R^n\setminus\Omega
 \end{cases}
\]
admits a nonzero weak solution
(in the variational sense specified
in Section \ref{sec:Prel}).
Such values of $\lambda$ are called \emph{generalized eigenvalues}, and
the corresponding nonzero weak solutions are called
\emph{generalized eigenfunctions}. For each fixed generalized eigenvalue,
the vector space of all weak solutions, including zero, is its
\emph{associated eigenspace}. 
In particular, we say that $C_{\rm emb}(s)$ is
\emph{simple} if its associated eigenspace is one-dimensional.

This terminology always refers to the above generalized problem;
Dirichlet eigenvalues and eigenfunctions of $-\Delta$ will instead refer
to the classical Dirichlet Laplacian. 
\vspace{0.1cm}

As it is clear, both \eqref{eq:Problem} and \eqref{eq:Problem_Spazio} can be interpreted as Lane-Emden-type equations for the operator
\begin{equation}
	\mathcal L_{\gamma,s}:=-\Delta-\gamma(-\Delta)^s, \qquad \gamma >0.
\end{equation}	

Before entering into the details of the results we will prove, let us mention that  $\mathcal L_{\gamma,s}$ fits in the more general framework of linear superposition of (possibly) fractional operators given by
\begin{equation*}
	L_{\mu} := \int_{[0,1]}(-\Delta)^s \, d\mu(s),
\end{equation*}
\noindent where $\mu = \mu^+ - \mu^-$ is a signed measure. This class of operators has been recently studied in several directions, see e.g. \cite{RE2, RE22, RE5, RE6, RE7, RE8, PESP:2024, RE12}. We also mention \cite{MMV, GMV, CCMV} where the authors refer to $\mathcal L_{\gamma,s}$ as {\em indefinite} or {\em nonpositive} operator.
This recent line of research dealing with the sum of two operators in which the lower-order one possesses the wrong sign (w.r.t. the validity of a maximum principle) is somehow complementary to the more classical study of operators of mixed integer and fractional order (see e.g.~\cite{COURR,Cancelier} in relation with the maximum principle) which is also experiencing an intense revival (see e.g., among the many,\cite{CKSV,BFV,RSPoho,BDVV-CPDE22,AC,AntoCozzi,DeFMin,GarainKinnunen,GarainLindgren,BEMV}). All this interest can be easily understood in view of the possible applications to biology and other real--life situations, see e.g.~\cite{DV-niche,DPLV2}.

\medskip

\noindent -\,\,\emph{Motivations}. 
The classical Lane-Emden equation reads
\begin{equation}\label{eq:LANE-EMDEN}
	-\Delta u = u^p,
\end{equation}
\noindent and it has been the subject of several lines of research. One of the interesting phenomena related 
to this PDE is the existence of a critical exponent in the following sense: the numbers
\begin{equation*}
	\inf \left\{p\in \mathbb{R}: \text{$\exists$ a positive solution to \eqref{eq:LANE-EMDEN} in $\R^n$}\right\},
\end{equation*}
\noindent and, considering a bounded, smooth and star-shaped set $\Omega$,
\begin{equation*}
	\sup \left\{p\in \mathbb{R}: \text{$\exists$ a positive solution $u$ to \eqref{eq:LANE-EMDEN} in $\Omega$ with $u = 0$ on $\de\Omega$}\right\},
\end{equation*}
\noindent do coincide with $2^{\ast}-1$, see \cite{GidasSpruck,CaGiSp}.
We briefly mention that a totally analogous phenomenon appears in the fractional counterpart of \eqref{eq:LANE-EMDEN}, namely replacing $-\Delta$ with $(-\Delta)^s$, the only difference being that the precise value of the critical exponent is $2^{\ast}_s -1 = \tfrac{n+2s}{n-2s}$.

A natural question related to the study of the operator of mixed order given by 
$$\mathcal{L}:= -\Delta + (-\Delta)^s, \qquad s \in (0,1),$$
\noindent has been whether a similar notion of {\it critical exponent} does exist for $\mathcal{L}$. A first answer to this question has been provided in \cite{BaDPQ}, where the authors showed that even if it exists, it cannot coincide with the one of $-\Delta$. One can probably trace the origin of this wrong belief to the paper \cite{BDVV5}. The starting point therein is the following simple observation: let $\Omega \subseteq \mathbb{R}^{n}$ be an open set and $s \in (0,1)$, then there exists a constant $\mathcal{S}_{n,s}(\Omega)>0$ such that
  \begin{equation}\label{eq:mixed_Sobolev}
	  	\mathcal{S}_{n,s}(\Omega)  \|u\|^{2}_{L^{2^{\ast}}(\mathbb{R}^{n})} \leq \|\nabla u\|^{2}_{L^{2}(\mathbb{R}^{n})} + [u]^2_{s},
	  \end{equation}
  for every 
  $u \in C^{\infty}_{0}(\Omega)$. This is clearly a consequence of the classical Sobolev inequality.
  
In principle, the constant $\mathcal{S}_{n,s}(\Omega)$ could depend on~$n$, $s$ and~$\Omega$, but this is actually not the case (see~\cite[Theorem~1.1]{BDVV5}), indeed 
  \begin{equation*}
	  	\mathcal{S}_{n,s}(\Omega) = S_{n}
	  \end{equation*}
   where $S_n$ is the best constant for the classical Sobolev inequality, achieved by the Aubin-Talenti functions.

  Moreover, the energy functional 
  \begin{equation}\label{eq:Mixed_Energy}
	  	E(u) := \dfrac{1}{2} \int_{\R^n}|\nabla u|^2 \, dx + \dfrac{1}{2}[u]^{2}_s
	  \end{equation}
exhibits a lack of scaling invariance, namely
  \begin{equation}\label{eq:Lack_Scaling}
	  	E(u_{t}) = \dfrac{1}{2} \int_{\R^n}|\nabla u|^2 \, dx + \dfrac{t^{2s-2}}{2}[u]^{2}_s, 
	  \end{equation}
where $u_t(x):=t^{(n-2)/2}u(tx)$, with $u$ viewed as a function on
$\mathbb R^n$, is the rescaling which preserves the Dirichlet seminorm
$\|\nabla u\|_{L^2(\mathbb R^n)}$. 
This second simple remark allows to show that the best constant in the local-nonlocal Sobolev inequality \eqref{eq:mixed_Sobolev} is never achieved, see~\cite[Theorem~1.2]{BDVV5}.

Since the first variation of the energy functional $E$ in \eqref{eq:Mixed_Energy} is given by the local-nonlocal operator $\mathcal{L}$,
 it became natural to attack the study of the existence of positive solutions to Sobolev critical problems in bounded domains (in the spirit of the celebrated \cite{BrNir}) and with leading operator $\mathcal{L}$: a not exhaustive list of contributions may include~\cite{BDVV5,BiagiVecchi,BiagiVecchi2}. 

\medskip 

Let us now come back to the topic of interest of the present paper.
The first result is the following 
\begin{theorem}\label{thm:intro-whole-space}
	Let $n\ge3$, $s\in(0,1)$ and $\gamma>0$. Any nonnegative weak supersolution
	$u\in H^1(\R^n)$ of
	\begin{equation*}
		-\Delta u-\gamma(-\Delta)^s u\ge0
		\qquad\text{in }\R^n
	\end{equation*}
	is identically zero. In particular, for every $r>1$, the equation
	\begin{equation*}
		-\Delta u-\gamma(-\Delta)^s u=u^{r-1}
		\qquad\text{in }\R^n
	\end{equation*}
	admits no nontrivial nonnegative weak solution in $H^1(\R^n)$, whenever the
	right-hand side is locally integrable.
\end{theorem}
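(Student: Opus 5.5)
The plan is to work on the Fourier side, testing the inequality against a heat-kernel regularization of $u$ itself. Let $p_t$ denote the Gaussian heat kernel, so $\widehat{p_t}(\xi)=e^{-t|\xi|^2}$. Since $u\ge0$ and $u\in H^1(\R^n)$, the function $\varphi_t:=p_t*u$ is nonnegative and belongs to $H^1(\R^n)$. It is therefore an admissible test function: first use density of nonnegative $C_c^\infty$ functions in the nonnegative cone of $H^1$, noting that both bilinear forms are continuous on $H^1$ because $[v]_s^2\le C\|v\|_{H^1}^2$. Plancherel and the symbols $|\xi|^2$ and $|\xi|^{2s}$ of the two operators then turn the weak inequality into
\begin{equation*}
	\int_{\R^n}\bigl(|\xi|^2-\gamma|\xi|^{2s}\bigr)\,|\hat u(\xi)|^2\,e^{-t|\xi|^2}\,d\xi\ \ge\ 0
	\qquad\text{for every } t>0,
\end{equation*}
up to harmless normalization constants in the symbols. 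The aim is to show this fails for $t$ large unless $u\equiv0$, because the heat weight makes the low frequencies, where the symbol is negative, dominate.

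Fix $\delta>0$ so small that $\gamma|\xi|^{2s}-|\xi|^2\ge\frac{\gamma}{2}|\xi|^{2s}$ for $|\xi|<\delta$. Split the integral at $|\xi|=\delta$.

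On $\{|\xi|\ge\delta\}$, for $t\ge1$ the function $|\xi|^2e^{-(t-1)|\xi|^2}$ is bounded by $e^{-(t-1)\delta^2}\delta^2$ there. Hence the positive part is at most
\begin{equation*}
	e^{-(t-1)\delta^2}\!\int|\xi|^2e^{-|\xi|^2}|\hat u|^2\,d\xi\ \le\ C\,e^{-t\delta^2}\|\nabla u\|_{L^2}^2 .
\end{equation*}

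On $\{|\xi|<\delta/2\}$ we have $e^{-t|\xi|^2}\ge e^{-t\delta^2/4}$. So the negative part has absolute value at least
\begin{equation*}
	\frac{\gamma}{2}\,e^{-t\delta^2/4}\,m_\delta,
	\qquad
	m_\delta:=\int_{|\xi|<\delta/2}|\xi|^{2s}|\hat u|^2\,d\xi .
\end{equation*}
The region $\delta/2\le|\xi|<\delta$ only contributes further negative terms. If $m_\delta>0$, letting $t\to\infty$ makes $e^{-t\delta^2/4}$ beat $e^{-t\delta^2}$. The total integral then becomes negative, a contradiction, and hence $u\equiv0$.

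The main obstacle is showing $m_\delta>0$, that is, that $\hat u$ cannot vanish a.e. on a ball $B_\rho$ around the origin when $u\ge0$ and $u\not\equiv0$. This is exactly where nonnegativity enters; without it, functions whose spectrum avoids a neighbourhood of $0$ are genuine counterexamples. I would argue via the autocorrelation $g:=u*\tilde u$, where $\tilde u(x)=u(-x)$. It is continuous, nonnegative, satisfies $g(0)=\|u\|_{L^2}^2>0$, and $\hat g=|\hat u|^2$. Suppose $\hat u=0$ a.e. on $B_\rho$. Take a real $\chi\in C_c^\infty(B_{\rho/2})$, even and nonzero, and let $\psi:=|\check\chi|^2$. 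Then $\psi\ge0$, $\psi\not\equiv0$, $\psi$ is real analytic (Paley–Wiener), so $\psi>0$ off a null set, and $\hat\psi=\chi*\chi$ is supported in $B_\rho$. Parseval gives $\int g\psi\,dx=c\int|\hat u|^2\hat\psi\,d\xi=0$. This contradicts $g\ge0$ with $g>0$ near the origin and $\psi>0$ a.e. Some care is needed to justify Parseval here: $g\in L^\infty$, $\psi\in\mathcal S$, and the identity holds in the sense of tempered distributions.

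The second assertion of Theorem~\ref{thm:intro-whole-space} then follows immediately. If $u\ge0$ is a weak solution with $u^{r-1}\in L^1_{\rm loc}$, then testing against nonnegative $\varphi$ gives a nonnegative right-hand side $\int u^{r-1}\varphi\ge0$. So $u$ is a nonnegative weak supersolution in $H^1(\R^n)$, and hence vanishes identically. This holds for every $r>1$, with no Serrin-type threshold.
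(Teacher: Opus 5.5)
Your proof is correct, and it follows a genuinely different route from the paper's.

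The paper argues in physical space, in the Mitidieri--Pohozaev spirit. It takes the fractional bubble $\psi(x)=(1+|x|^2)^{-(n-2s)/2}$ and uses the explicit identity $(-\Delta)^s\psi=a_{n,s}(1+|x|^2)^{-(n+2s)/2}$ together with the pointwise bound $-\Delta\psi\le C_0(-\Delta)^s\psi$. The different scalings of the two operators then give $\mathcal L_{\gamma,s}\psi_R<0$ everywhere for $R$ large. It approximates $\psi_R$ by nonnegative cut-offs in both energies, which is needed since $\psi_R\notin L^2$ when $n\le 4s$. Finally it moves the operator onto $\psi_R$ by duality, reaching $0\le\int u\,\mathcal L_{\gamma,s}\psi_R\,dx<0$.

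You instead test with $p_t\ast u$, built from $u$ itself, and reduce everything to the sign of the symbol $|\xi|^2-c\gamma|\xi|^{2s}$ near $\xi=0$. Nonnegativity of $u$ enters only to guarantee that $\hat u$ carries mass in every neighbourhood of the origin. Your autocorrelation argument proves this correctly. The Parseval step is just Fubini, since $|\hat u|^2\in L^1$ and your $|\check\chi|^2\in L^1$. You could also bypass real analyticity by taking $\chi\ge0$, so that $|\check\chi(0)|^2>0$.

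What your route buys is economy and generality. It needs no special-function computation, no cut-off lemma, and no integration by parts against a test function outside $L^2$. The same proof works for any translation-invariant form whose symbol is negative in a punctured neighbourhood of the origin and whose positive part is integrable against $|\hat u|^2$. What the paper's route buys is independence from Plancherel. It only needs a pointwise sign of $\mathcal L_{\gamma,s}$ on one explicit barrier plus duality, which is the form that fits the test-function method for Serrin- and Fujita-type problems.

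One small slip: the bound $|\xi|^2e^{-(t-1)|\xi|^2}\le\delta^2e^{-(t-1)\delta^2}$ on $\{|\xi|\ge\delta\}$ holds only for $t\ge1+\delta^{-2}$. Your displayed estimate follows anyway from $e^{-t|\xi|^2}\le e^{-t\delta^2}$ there, which gives $O(e^{-t\delta^2}\|\nabla u\|_{L^2}^2)$ directly.
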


In this case the wrong sign of the fractional term yields a rather rigid Liouville
property. The argument is based on a positive test function on which the
operator $\mathcal L_{\gamma,s}$ has a strictly negative sign at a sufficiently large scale. In particular, no
restriction related to the Sobolev critical exponent is involved. Our technique here is clearly reminiscent of the one by Mitidieri and Pohozaev (see e.g. \cite{MiPo} and the references therein for an accurate description of the method and the several applications found by the authors) and it has also been recently applied in a local--nonlocal parabolic setting in \cite{BiPuVe} to detect the corresponding Fujita exponent.

As a final comment, let us also compare our Theorem \ref{thm:intro-whole-space} with the recent results appeared in \cite{GuoLiXie} (for the linear operator $-\Delta + (-\Delta)^s$) and \cite{DPPQ} (for the sum of possibly fractional $p$-Laplacians): in both cases, the authors proved that the Serrin exponent coincides with the one related to the operator of the lowest order, e.g. $\tfrac{n}{n-2s}$ in the case of \cite{GuoLiXie}.

\medskip

The second part of the paper is devoted to the study of \eqref{eq:Problem} settled in a bounded set $\Omega$. 
We postpone to Section~\ref{sec:Prel}, and in particular to \eqref{eq:def_X12}, the exhaustive discussion on the function space $\mathcal{X}^{1,2}(\Omega)$ and several equivalent definitions. For the moment, we just point out that
one of the equivalent definitions reads
$$\mathcal{X}^{1,2}(\Omega) = \big\{u\in H^1(\R^n):\,\text{$u|_\Omega\in H_0^1(\Omega)$ and 
	$u\equiv 0$ a.e.\,in $\R^n\setminus\Omega$}\big\}.$$
A first result in the Sobolev critical case (i.e. for $r=2^{\ast}$) has been recently obtained in \cite{BDVV6} and we believe it is worth briefly recalling it.
Let us first define the functional
\begin{equation}\label{eq:Def_Qgamma}
	Q_{\gamma}(u) := \int_{\Omega}|\nabla u|^{2} \, dx - \gamma \, [u]^{2}_{s}, \quad u \in \mathcal{X}^{1,2}(\Omega),
\end{equation}
and let us consider the following constrained minimization problem
\begin{equation}\label{eq:Def_Sgamma}
	S(\gamma) := \inf \big\{ Q_{\gamma}(u) : u \in \mathcal{X}^{1,2}(\Omega), \|u\|_{L^{2^{\ast}}(\Omega)}=1 \big\}.
\end{equation}

The main result obtained in \cite{BDVV6}
reads as follows:

\begin{theorem}[\cite{BDVV6} - Theorem 1.1]\label{thm:Da_BLMS}
	Let $n\geq 3$ and $\Omega \subset \mathbb{R}^{n}$ be an open and bounded set with Lipschitz boundary.
	%Let further be $\gamma \in (0,C_{emb})$.
	Then, the following assertions hold:
	\begin{enumerate}
			\item[(1)] If $n\geq 5$, the infimum in \eqref{eq:Def_Sgamma} is achieved for every $\gamma\in (0,\Cemb(s))$. 
			\vspace{0.1cm}
			
			\item[(2)] If $n = 3,4$, there exists $\gamma^*\in [0,\Cemb(s))$ such that
			the infimum in \eqref{eq:Def_Sgamma} is achieved for every~$\gamma\in(\gamma^*,\Cemb(s))$.
		\end{enumerate}
	In both cases, if $u_0\in \mathcal{X}^{1,2}(\Omega)$ achieves the infimum in \eqref{eq:Def_Sgamma}, then
	\begin{equation} \label{eq:tildeuSolutiondaMinimo}
			\widetilde{u} = S(\gamma)^{1/(2^*-2)}u_0		
		\end{equation}
	is a nontrivial weak solution to problem \eqref{eq:Problem} with $r=2^{\ast}$.
\end{theorem}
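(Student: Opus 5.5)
The plan follows the Brezis--Nirenberg scheme: (a) show that minimizing sequences for \eqref{eq:Def_Sgamma} are compact whenever $S(\gamma)<S_n$; (b) verify the strict inequality $S(\gamma)<S_n$ in the stated ranges of $\gamma$; (c) deduce the equation from the Euler--Lagrange equation.

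\emph{Coercivity and compactness.} First, the definition of $\Cemb(s)$ gives, for $0<\gamma<\Cemb(s)$,
\[
Q_\gamma(u)\ge \Big(1-\tfrac{\gamma}{\Cemb(s)}\Big)\|\nabla u\|_{L^2(\Omega)}^2 .
\]
Hence $Q_\gamma$ is coercive on $\mathcal X^{1,2}(\Omega)$, and the Sobolev inequality gives $S(\gamma)\ge (1-\gamma/\Cemb(s))S_n>0$. Take a minimizing sequence $u_k$ with $\|u_k\|_{L^{2^*}}=1$. It is bounded in $\mathcal X^{1,2}(\Omega)$, so up to a subsequence $u_k\rightharpoonup u$ weakly in $H^1$ and $u_k\to u$ in $L^2$ and a.e.

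The key point is that $[u_k-u]_s\to0$. Since all functions vanish outside the bounded set $\Omega$, the Fourier/interpolation bound $[v]_s^2\le C\|v\|_{L^2}^{2(1-s)}\|\nabla v\|_{L^2}^{2s}$ applies, and $\|u_k-u\|_{L^2}\to0$ while $\|\nabla(u_k-u)\|_{L^2}$ stays bounded. Thus the nonlocal term is a compact perturbation and $[u_k]_s^2\to[u]_s^2$.

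Set $v_k=u_k-u$. The Hilbert splitting of the gradient norm and the Brezis--Lieb lemma give
\[
S(\gamma)+o(1)=Q_\gamma(u)+\|\nabla v_k\|_2^2\ge S(\gamma)a^{2/2^*}+S_n(1-a)^{2/2^*}+o(1),
\]
where $a=\|u\|_{2^*}^{2^*}\in[0,1]$ and $\lim\|v_k\|_{2^*}^{2^*}=1-a$. If $S(\gamma)<S_n$, strict concavity of $t\mapsto t^{2/2^*}$ forces $a=1$. Then $u$ is a minimizer, and so is $|u|$, so we may take it nonnegative.

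\emph{The strict inequality.} Everything therefore reduces to proving $S(\gamma)<S_n$, which I expect to be the main obstacle. For $n\ge5$ I would test with the truncated Aubin--Talenti functions $u_\varepsilon=\varphi U_\varepsilon$, where $\varphi$ is a cutoff supported in $\Omega$. The classical estimates give
\[
\frac{\|\nabla u_\varepsilon\|_2^2}{\|u_\varepsilon\|_{2^*}^2}=S_n+O(\varepsilon^{n-2}),
\]
while scaling gives $[u_\varepsilon]_s^2=\varepsilon^{2-2s}\big([U]_s^2+o(1)\big)$. Here $[U]_s<\infty$ because $\widehat U(\xi)\sim|\xi|^{-2}$ near $0$ and $n>4-2s$. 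Since $n-2\ge3>2-2s$, the negative term $-\gamma[u_\varepsilon]_s^2$ dominates the error for small $\varepsilon$. This yields $S(\gamma)<S_n$ for every $\gamma\in(0,\Cemb(s))$, which is assertion (1).

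For $n=3,4$ the error is $O(\varepsilon)$, respectively $O(\varepsilon^2|\log\varepsilon|)$, which $\varepsilon^{2-2s}$ need not beat; indeed for $n=3$ the seminorm $[U]_s$ may even be infinite. So I would argue differently. The map $\gamma\mapsto S(\gamma)$ is nonincreasing, being an infimum of functions nonincreasing in $\gamma$. Testing with a generalized eigenfunction $\Phi$ for $\Cemb(s)$ (Section~2), normalized in $L^{2^*}$, gives
\[
S(\gamma)\le Q_\gamma(\Phi)=(\Cemb(s)-\gamma)[\Phi]_s^2\to0 \quad\text{as } \gamma\to\Cemb(s).
\]
Thus $\gamma^*:=\inf\{\gamma\in[0,\Cemb(s)):S(\gamma)<S_n\}<\Cemb(s)$, and $S(\gamma)<S_n$ on $(\gamma^*,\Cemb(s))$ by monotonicity. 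This gives assertion (2).

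\emph{The equation.} Finally, a minimizer $u_0$ satisfies the Euler--Lagrange equation with Lagrange multiplier. Testing against $u_0$ identifies the multiplier as $S(\gamma)$, so
\[
-\Delta u_0-\gamma(-\Delta)^su_0=S(\gamma)|u_0|^{2^*-2}u_0
\]
weakly. The homogeneity of degree $2^*-1$ then shows that $\widetilde u=S(\gamma)^{1/(2^*-2)}u_0$ solves \eqref{eq:Problem} with $r=2^*$, as stated in \eqref{eq:tildeuSolutiondaMinimo}.
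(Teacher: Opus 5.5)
Apart from one false side remark (discussed below), your proof of the statement is correct. The paper does not prove this theorem; it quotes it from \cite{BDVV6}. Your ingredients are nonetheless the ones the paper uses for the analogous claims: coercivity via $C_{\rm emb}(s)$ and compactness of the Gagliardo term as in Proposition~\ref{prop:attainment}, the bound $S(\gamma)\le(C_{\rm emb}(s)-\gamma)[\Phi]_s^2\to0$ as in Proposition~\ref{prop:basic-bounds}, and the identification of the Lagrange multiplier. Combined with the Brezis--Lieb splitting under $S(\gamma)<S_n$, the bubble test for $n\ge5$ (where $\varepsilon^{2-2s}\gg\varepsilon^{n-2}$), and monotonicity of $\gamma\mapsto S(\gamma)$ for $n=3,4$, this gives a complete argument.

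The sentence ``$u$ is a minimizer, and so is $|u|$, so we may take it nonnegative'' is false and must be deleted, because with the wrong sign the usual trick runs backwards. Indeed $\|\nabla|u|\|_{L^2(\Omega)}=\|\nabla u\|_{L^2(\Omega)}$ and $\bigl\|\,|u|\,\bigr\|_{L^{2^*}(\Omega)}=1$, while
\[
[u]_s^2-\bigl[\,|u|\,\bigr]_s^2=8\iint_{\R^{2n}}\frac{u^+(x)\,u^-(y)}{|x-y|^{n+2s}}\,dx\,dy\ \ge0 .
\]
Hence $Q_\gamma(|u|)\ge Q_\gamma(u)$, with strict inequality whenever $u$ changes sign, so $|u|$ is a minimizer only if $u$ already has constant sign. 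The paper points out exactly this obstruction right after quoting the theorem. It is also why positivity is obtained only in Theorem~\ref{thm:intro-positivity}, by a perturbative argument requiring connected $C^{1,1}$ domains, small $s$, and $\gamma$ close to $C_{\rm emb}(s)$. Since the statement asserts only a nontrivial solution, dropping the sign claim leaves your proof intact.

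A minor point: your explanation of why the bubble test is abandoned for $n=3,4$ is inaccurate. The gradient error of the truncated bubble is $O(\varepsilon^{n-2})$ in every dimension; the logarithm for $n=4$ belongs to $\|u_\varepsilon\|_{L^2}^2$, not to the gradient term. Moreover, for $n=4$ one has $[U]_s<\infty$ for all $s$, so your test already gives $S(\gamma)<S_n$ for every $\gamma>0$ there, since $\varepsilon^{2-2s}\gg\varepsilon^2$; the same holds for $n=3$ and $s>1/2$. This does not affect the proof, because the monotonicity argument establishes (2) on its own.
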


As it is clear, the sign of the solution has not been investigated, and it can neither be a simple consequence of a maximum principle nor be based on a trivial test function argument (i.e. testing the equation by $u^+$). To address this issue, we follow a path recently used in \cite{RE8} to prove,
for small fractional orders, the sign-definiteness of generalized
eigenfunctions associated with the least eigenvalue; see Section \ref{sec:Prel} for more details. We stress that this approach is kind of transversal and naturally appears in other problems, see e.g. \cite{MMR}, where the authors study a shape optimization problem associated with a functional which exhibits competitive terms like in our case. 

%The strategy adopted can be schematized as follows:
%\begin{itemize}
%	\item 
%\end{itemize}
As the argument in \cite{RE8} suggests, there is actually no reason to restrict our interest only to the Sobolev critical case $r=2^{\ast}$. Therefore, 
for $0<\gamma<\Cemb(s)$ and $1<r\le2^*$, we define
\begin{equation}\label{eq:Sr}
	\Sr(\gamma):=\inf\left\{Q_\gamma(u):u\in \mathcal{X}^{1,2}(\Omega),\
	\norm{u}_{L^r(\Omega)}=1\right\}.
\end{equation}
It is pretty easy to see that the infimum in \eqref{eq:Sr} is achieved in the Sobolev subcritical case $r \in (1,2^{\ast})$ because no lack of compactness occurs (see Proposition \ref{prop:attainment} for the standard details). 

Once this is established, we have the following
\begin{theorem}\label{thm:intro-threshold}
	Let $n\ge3$, let $\Omega\subset\R^n$ be open and bounded with Lipschitz
	boundary, let $s\in(0,1)$ and let $1<r\le2^*$. For $\gamma$ in the range
	where the infimum defining $\Sr(\gamma)$ is achieved, let $u_{\gamma,r}$
	be any normalized minimizer. If $r<2^*$ this range is the whole interval
	$(0,\Cemb(s))$, while for $r=2^*$ the attainment range is the one given by
	\cite[Theorem~1.1]{BDVV6}. Then the following assertions hold.
	\begin{enumerate}
		\item[i)]
		\begin{equation*}
			\Sr(\gamma)\longrightarrow0
			\qquad\text{as }\gamma\to\Cemb(s)^-.
		\end{equation*}
		\item[ii)] If $\gamma_k\to\Cemb(s)^-$ and $u_{\gamma_k,r}$ are normalized
		minimizers, then, up to a subsequence, there exists
		$u_\infty\in \mathcal{X}^{1,2}(\Omega)$ such that
		\begin{equation*}
			u_{\gamma_k,r}\longrightarrow u_\infty
			\qquad\text{strongly in }\mathcal{X}^{1,2}(\Omega),
		\end{equation*}
		and
		\begin{equation*}
			\norm{u_\infty}_{L^r(\Omega)}=1,
			\qquad
			\norm{\nabla u_\infty}_{L^2(\Omega)}^2
			=\Cemb(s)[u_\infty]_s^2.
		\end{equation*}
		In particular, $u_\infty$ is an $L^r$-normalized generalized eigenfunction
associated with $C_{\rm emb}(s)$. If $C_{\rm emb}(s)$ is simple and $\Phi_s$
is any fixed $L^r$-normalized generalized eigenfunction associated with it,
then
		\begin{equation}\label{eq:intro-plus-minus}
			u_\infty=\Phi_s\qquad\text{or}\qquad u_\infty=-\Phi_s.
		\end{equation}
		\item[iii)] For every $R\in[1,+\infty)$ there exist
		$\gamma_R<\Cemb(s)$ and $M_R>0$ such that
		\begin{equation*}
			\norm{u_{\gamma,r}}_{L^R(\Omega)}\le M_R
		\end{equation*}
		for every $\gamma\in(\gamma_R,\Cemb(s))$ and every normalized minimizer
		$u_{\gamma,r}$.
	\end{enumerate}
\end{theorem}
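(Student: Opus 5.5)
For i), I would test $S_r(\gamma)$ with a generalized eigenfunction $\Phi$ associated with $C_{\rm emb}(s)$, normalized so that $\|\Phi\|_{L^r(\Omega)}=1$. Such a $\Phi$ lies in $\mathcal X^{1,2}(\Omega)$, and since $r\le 2^*$, Sobolev gives $\Phi\in L^r$, so the normalization is legitimate. Its existence is the attainment of \eqref{eq:Cemb} from Section~\ref{sec:Prel}, which I take as known. Then
\[
0\le S_r(\gamma)\le Q_\gamma(\Phi)=\bigl(C_{\rm emb}(s)-\gamma\bigr)[\Phi]_s^2\longrightarrow 0 .
\]
Nonnegativity follows from $Q_\gamma(u)\ge (1-\gamma/C_{\rm emb}(s))\|\nabla u\|_2^2\ge 0$.

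For ii), set $u_k=u_{\gamma_k,r}$. The first task is a uniform bound on $\|\nabla u_k\|_{L^2}$; I expect this to be the main obstacle, since the coercivity constant $1-\gamma_k/C_{\rm emb}(s)$ degenerates. I would argue by contradiction. Suppose $t_k:=\|\nabla u_k\|_2\to\infty$ and set $v_k=u_k/t_k$, so that $\|\nabla v_k\|_2=1$, $\|v_k\|_{L^r}=1/t_k\to0$, and $Q_{\gamma_k}(v_k)=S_r(\gamma_k)/t_k^2\to0$. Up to a subsequence $v_k\rightharpoonup v$ weakly in $\mathcal X^{1,2}(\Omega)$. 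Because $s<1$, the embedding $\mathcal X^{1,2}(\Omega)\hookrightarrow H^s(\mathbb R^n)$ is compact on bounded $\Omega$ (Rellich for the fractional seminorm), so $[v_k]_s\to[v]_s$. Moreover $v_k\to v$ in $L^1$, while $\|v_k\|_{L^r}\to 0$, hence $v=0$. But then $1=\|\nabla v_k\|_2^2=Q_{\gamma_k}(v_k)+\gamma_k[v_k]_s^2\to C_{\rm emb}(s)[v]_s^2=0$, a contradiction. So $(u_k)$ is bounded.

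With boundedness in hand, extract a weakly convergent subsequence $u_k\rightharpoonup u_\infty$. Then $[u_k]_s\to[u_\infty]_s$ by the compact fractional embedding. The identity
\[
\|\nabla u_k\|_2^2=S_r(\gamma_k)+\gamma_k[u_k]_s^2\to C_{\rm emb}(s)[u_\infty]_s^2\le\|\nabla u_\infty\|_2^2
\]
combined with weak lower semicontinuity gives convergence of norms, and hence strong convergence, together with the equality $\|\nabla u_\infty\|_2^2=C_{\rm emb}(s)[u_\infty]_s^2$. For $r<2^*$, compactness in $L^r$ yields $\|u_\infty\|_{L^r}=1$ directly. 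For $r=2^*$, the same conclusion follows from strong $\mathcal X^{1,2}$ convergence plus Sobolev. Thus $u_\infty\neq0$ attains \eqref{eq:Cemb}, so it is a generalized eigenfunction; this is the standard Euler--Lagrange argument for a minimizer of the Rayleigh quotient. If $C_{\rm emb}(s)$ is simple, then $u_\infty=c\Phi_s$ for some constant $c$, and the two normalizations force $|c|=1$.

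For iii), I would argue by contradiction as well. Fix $R$ and suppose there are $\gamma_k\to C_{\rm emb}(s)^-$ and minimizers $u_k$ with $\|u_k\|_{L^R}\to\infty$. Each $u_k$ solves the Euler--Lagrange equation
\[
-\Delta u_k-\gamma_k(-\Delta)^s u_k=S_r(\gamma_k)|u_k|^{r-2}u_k .
\]
I would then run a Moser iteration or Brezis--Kato type $L^p$ bootstrap whose constants are uniform in $k$. The lower-order term $\gamma_k(-\Delta)^s u_k$ is treated as a source: testing with powers $|u_k|^{\beta-1}u_k$ (suitably truncated), one estimates the nonlocal contribution through the $H^s$ norm of the powers. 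These are controlled by the interpolation $[w]_s^2\le\varepsilon\|\nabla w\|_2^2+C_\varepsilon\|w\|_2^2$, which absorbs it into the gradient term independently of $k$. The right-hand side has coefficient $S_r(\gamma_k)\to0$ and, for $r<2^*$, is subcritical; in the critical case $r=2^*$, the strong convergence from ii) gives uniform integrability of $|u_k|^{2^*-2}$, which is what Brezis--Kato requires. Iterating finitely many times reaches $L^R$ with bounds uniform along the subsequence, contradicting $\|u_k\|_{L^R}\to\infty$.

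The delicate points are the uniform bound in ii) and making the iteration constants independent of $\gamma$ in iii) despite the sign of the fractional term.
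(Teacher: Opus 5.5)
Your proposal is correct. Part i) and the identification of the limit in part ii) match the paper; the two uniform bounds are obtained differently.

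For the $\mathcal{X}^{1,2}$-bound in ii), which you flag as the main obstacle, the paper needs no blow-up argument. Write $Q_\gamma(u)=Q_{\Cemb}(u)+(\Cemb-\gamma)[u]_s^2$ with $Q_{\Cemb}(u):=\norm{\nabla u}_2^2-\Cemb[u]_s^2\ge0$. Combined with your own estimate from i), this gives $(\Cemb-\gamma)[u_{\gamma,r}]_s^2\le\Sr(\gamma)\le(\Cemb-\gamma)[\Phi_r]_s^2$. Hence $[u_{\gamma,r}]_s^2\le[\Phi_r]_s^2$ and $\norm{\nabla u_{\gamma,r}}_2^2\le\Cemb[\Phi_r]_s^2$, explicitly, for every $\gamma$ and every minimizer. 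Your normalization/compactness argument is valid, but it only gives non-quantitative boundedness along sequences.

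For iii), the paper neither iterates nor uses ii). It tests \eqref{eq:EL} with $H_{\beta,T}(u)=u\min\{|u|,T\}^{2\beta-2}$. The wrong-sign term is controlled by the pointwise inequality $(a-b)\bigl(H_{\beta,T}(a)-H_{\beta,T}(b)\bigr)\le2|G_{\beta,T}(a)-G_{\beta,T}(b)|^2$, where $G_{\beta,T}(t)=t\min\{|t|,T\}^{\beta-1}$. This reverse Moser-type inequality is exactly what your phrase ``estimate the nonlocal contribution through the $H^s$ norm of the powers'' presupposes. It is the only place where the sign of the fractional term matters, so you should state and prove it; the case $ab<0$ needs a short separate check.

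The paper then proceeds in three steps:
\begin{enumerate}
\item It applies the interpolation inequality to absorb the nonlocal term.
\item It absorbs the right-hand side using only H\"older, $\norm{u_{\gamma,r}}_r=1$ and $\Sr(\gamma)\to0$. So even for $r=2^*$ no uniform integrability or strong convergence from ii) is needed.
\item It removes the residual $\norm{G_{\beta,T}(u_{\gamma,r})}_2^2$ by splitting at a level $M$, using Chebyshev, $|\{|u_{\gamma,r}|>M\}|\le M^{-r}$, and H\"older on that small set.
\end{enumerate}
This yields the $L^{\beta 2^*}$ bound in a single step for each fixed $\beta$, directly for all $\gamma$ close to $\Cemb$ rather than by contradiction.

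Your bootstrap also works: start from the $L^{2^*}$ bound and apply the same absorption finitely many times, with $\gamma$ pushed closer to $\Cemb$ at each step. It is, however, longer, and in your version it relies on the compactness from ii).
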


We notice that the first part i) of the above Theorem \ref{thm:intro-threshold} has been already noticed for $r=2^{\ast}$ in \cite{BDVV6}. Regarding assertion~ii), the normalized minimizers converge, along
subsequences, to generalized eigenfunctions associated with
$C_{\rm emb}(s)$. If this eigenvalue is simple, the limit is determined
up to sign, as in \eqref{eq:intro-plus-minus}. Under the additional connectedness and boundary
regularity assumptions of Lemma~\ref{lem:DPLSV-small-s}, the corresponding $L^r$-normalized
generalized eigenfunction $\Phi_s$ can be chosen positive for sufficiently
small $s$. To exploit this property, we need to strengthen the convergence.
This is the content of the next result.

 \medskip
 
\begin{theorem}\label{thm:intro-C1}
	Assume in addition that $0<s<1/2$ and that $\Omega$ is of class $C^{1,1}$.
	Then, for every fixed $q\in[2,+\infty)$, there exist
	$\gamma_q<\Cemb(s)$ and $C_q>0$ such that
	\begin{equation*}
		\norm{u_{\gamma,r}}_{W^{2,q}(\Omega)}\le C_q
	\end{equation*}
	for every $\gamma\in(\gamma_q,\Cemb(s))$ and every normalized minimizer
	$u_{\gamma,r}$. In particular, if $q>n$ and $\alpha=1-n/q$, then
	\begin{equation*}
		\norm{u_{\gamma,r}}_{C^{1,\alpha}(\overline\Omega)}\le C_q
	\end{equation*}
	for every $\gamma\in(\gamma_q,\Cemb(s))$ and every normalized minimizer
	$u_{\gamma,r}$.
	
	Moreover, if $\gamma_k\to\Cemb(s)^-$ and $u_{\gamma_k,r}\to u_\infty$
	in $\mathcal{X}^{1,2}(\Omega)$ as in Theorem~\ref{thm:intro-threshold}, then
	\begin{equation*}
		u_{\gamma_k,r}\longrightarrow u_\infty
		\qquad\text{strongly in }W^{2,q}(\Omega)
	\end{equation*}
	for every fixed $q\in[2,+\infty)$. In particular, choosing $q>n$,
	\begin{equation*}
		u_{\gamma_k,r}\longrightarrow u_\infty
		\qquad\text{strongly in }C^1(\overline\Omega).
	\end{equation*}
	If $C_{\rm emb}(s)$ is simple, then $u_\infty=\pm\Phi_s$ as in~\eqref{eq:intro-plus-minus}.
\end{theorem}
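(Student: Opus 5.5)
The approach is a bootstrap built on the Euler--Lagrange equation. A normalized minimizer $u=u_{\gamma,r}$ satisfies, by Lagrange multipliers,
\[
 -\Delta u-\gamma(-\Delta)^s u=\Sr(\gamma)\,|u|^{r-2}u \quad\text{in }\Omega,\qquad u=0 \text{ in }\R^n\setminus\Omega .
\]
We rewrite this as a Dirichlet problem for the Laplacian,
\[
 -\Delta u = f_\gamma:=\gamma(-\Delta)^s u+\Sr(\gamma)|u|^{r-2}u \quad\text{in }\Omega,\qquad u=0 \text{ on }\de\Omega .
\]
Since $\Omega$ is $C^{1,1}$, the Calder\'on--Zygmund estimate gives $\|u\|_{W^{2,p}(\Omega)}\le C_p\|f_\gamma\|_{L^p(\Omega)}$ for every $p\in(1,\infty)$. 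It therefore suffices to bound $\|f_\gamma\|_{L^q(\Omega)}$ uniformly for $\gamma$ close to $\Cemb(s)$.

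The nonlinear term is harmless. By Theorem~\ref{thm:intro-threshold} i)--iii), $\Sr(\gamma)\to0$, and $\|u\|_{L^R}\le M_R$ for every finite $R$ once $\gamma>\gamma_R$. Hence $\Sr(\gamma)\,\||u|^{r-1}\|_{L^q}$ is uniformly bounded once $\gamma>\gamma_{q(r-1)}$.

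The main obstacle is the nonlocal term $(-\Delta)^s u$. Here $s<1/2$ is used: for $u$ vanishing outside $\Omega$ and in $W^{2,p}(\Omega)\cap W^{1,p}_0(\Omega)$, the zero extension lies in $W^{1,p}(\R^n)$ (though not in $W^{2,p}$ across $\de\Omega$). Since $2s<1$, $(-\Delta)^s$ maps $W^{1,p}(\R^n)$ boundedly into $L^p(\R^n)$; in fact $\|(-\Delta)^s u\|_{L^p}\le C(\|u\|_{L^p}+\|\nabla u\|_{L^p})$, by splitting the kernel at $|y|=1$ and using first differences on the near part. Interpolation then gives
\[
 \|(-\Delta)^s u\|_{L^p}\le \varepsilon\|u\|_{W^{2,p}(\Omega)}+C_\varepsilon\|u\|_{L^p},
\]
or alternatively $\|\nabla u\|_{L^p}\le C\|u\|_{W^{2,p_0}}$ for $p$ up to the Sobolev exponent of $p_0$. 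I would use the second form as a genuine bootstrap, to avoid the absorption needing an a priori finite $W^{2,p}$ norm. Start from $u\in\mathcal X^{1,2}$, which is bounded by i)--ii) (the $H^1$ norms converge). Then $(-\Delta)^s u\in L^2$ with uniform bound, since $\nabla u\in L^2$. So $u$ is bounded in $W^{2,2}$, hence $\nabla u$ is bounded in $L^{2^*}$, hence $(-\Delta)^s u$ is bounded in $L^{2^*}$, hence $u$ is bounded in $W^{2,2^*}$, and so on. After finitely many steps, depending only on $q$ and $n$, we reach the target $q$. At each step $\gamma_q$ is enlarged to accommodate the $L^R$ bound from iii) with $R=q(r-1)$. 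The $C^{1,\alpha}$ bound follows from Morrey's embedding.

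For the convergence, let $u_k=u_{\gamma_k,r}\to u_\infty$ in $\mathcal X^{1,2}$. The uniform $W^{2,q'}$ bound with $q'>q$ and Rellich compactness give $u_k\to u_\infty$ in $W^{1,q}$ along subsequences. Then $(-\Delta)^s u_k\to(-\Delta)^s u_\infty$ in $L^q$ by the same $W^{1,q}\to L^q$ bound. Moreover $\Sr(\gamma_k)|u_k|^{r-2}u_k\to0$ in $L^q$, and $\gamma_k\to\Cemb(s)$. So $f_{\gamma_k}\to\Cemb(s)(-\Delta)^s u_\infty$ in $L^q$, and Calder\'on--Zygmund applied to the differences gives $u_k\to u_\infty$ in $W^{2,q}$. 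The limit is identified with the $\mathcal X^{1,2}$ limit, so the whole sequence converges. The case $q>n$ gives $C^1(\overline\Omega)$ convergence, and the final sign statement is exactly \eqref{eq:intro-plus-minus} from Theorem~\ref{thm:intro-threshold}.
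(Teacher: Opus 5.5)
Your argument is correct, but it obtains the key $W^{2,q}$ bound by a different route from the paper.

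\textbf{The paper's route.} There is no bootstrap. The paper quotes the global estimate of \cite[Theorem~1.8]{RE8} for the full mixed operator, namely $\norm{v}_{W^{2,q}(\Omega)}\le C(\norm{g}_{L^q(\Omega)}+\norm{v}_{L^2(\Omega)})$ for $-\Delta v-\gamma(-\Delta)^s v=g$, with $C$ uniform in $\gamma\in[0,\Cemb]$. It applies this in one step together with Corollary~\ref{cor:rhs}. For the convergence it keeps the mixed operator on the left, writing $-\Delta w_k-\gamma_k(-\Delta)^s w_k=f_{\gamma_k}-(\Cemb-\gamma_k)(-\Delta)^s u_\infty$ for $w_k=u_{\gamma_k,r}-u_\infty$. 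Then only $\norm{w_k}_{L^2}\to0$ and $(-\Delta)^s u_\infty\in L^q(\Omega)$ are needed.

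\textbf{Your route.} You put the whole nonlocal term on the right and use only the classical Calder\'on--Zygmund estimate for the Dirichlet Laplacian on a $C^{1,1}$ domain. You control $(-\Delta)^s u$ through the elementary bound $\norm{(-\Delta)^s u}_{L^p(\R^n)}\le C(\norm{u}_{L^p}+\norm{\nabla u}_{L^p})$ for the zero extension, which holds precisely because $2s<1$. The price is a finite Sobolev bootstrap $p\mapsto p^*$. For the convergence you also need a Rellich compactness step giving $W^{1,q}$ convergence before Calder\'on--Zygmund can be applied to the differences.

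\textbf{What each buys.} Your route is self-contained and avoids the $c_{n,s}$ bookkeeping with \cite{RE8}. It also makes transparent where $s<1/2$ enters: the nonlocal term only sees the gradient of the zero extension, which lies in $W^{1,p}(\R^n)$ although the extension is not $W^{2,p}$ across $\partial\Omega$. The paper's route is shorter, and uses a single estimate for the minimizers, the limit equation and the differences alike.

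\textbf{Two routine points to state in a written version.} First, you need the identity $\Es(u,\varphi)=\int_{\R^n}(-\Delta)^s u\,\varphi\,dx$ for $u\in H^1(\R^n)$ to pass from the weak Euler--Lagrange equation to $-\Delta u=f_\gamma$. Fubini is justified here because $x\mapsto\int_{\R^n}|u(x)-u(y)|\,|x-y|^{-n-2s}\,dy$ belongs to $L^2(\R^n)$ when $s<1/2$. Second, you use the limit equation $-\Delta u_\infty=\Cemb(-\Delta)^s u_\infty$, which follows from $u_\infty$ being a generalized eigenfunction. It is what makes $u_k-u_\infty$ solve a Dirichlet problem whose right-hand side tends to zero in $L^q(\Omega)$.
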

\begin{remark}
The simplicity assumption appearing in the last part of Theorems~\ref{thm:intro-threshold}
and~\ref{thm:intro-C1} is satisfied under suitable additional hypotheses. Indeed,
if $\Omega$ is connected and of class $C^{1,1}$, Lemma~
\ref{lem:DPLSV-small-s} below shows
that there exists $s_0\in(0,1/2)$ such that, for every $s\in(0,s_0)$,
$C_{\rm emb}(s)$ is simple and admits a strictly positive generalized
eigenfunction. Hence, in this small-$s$ regime, the conclusions in
Theorems~\ref{thm:intro-threshold}
and~\ref{thm:intro-C1} involving $\pm\Phi_s$ hold automatically.
\end{remark}

We stress that the $C^{1,1}$-regularity assumed in Theorem
\ref{thm:intro-C1} is used
to apply the global boundary regularity theory from \cite{RE8}. Once the
strong $C^1$ convergence is available, we can combine it with the small-$s$
spectral information above and the Hopf lemma to obtain the desired sign.
\begin{theorem}\label{thm:intro-positivity}
	Assume that $\Omega$ is connected and of class $C^{1,1}$, and let
	$1<r\le2^*$. There exists $s_1\in(0,1/2)$ such that, for every fixed
	$s\in(0,s_1)$, there exists
	\begin{equation*}
		\overline\gamma_{r,s}<\Cemb(s)
	\end{equation*}
	with the following property: if
	$\gamma\in(\overline\gamma_{r,s},\Cemb(s))$, then every normalized
	minimizer $u_{\gamma,r}$ has a constant sign in $\Omega$. After multiplication
	by $-1$ if necessary,
	\begin{equation*}
		u_{\gamma,r}>0\qquad\text{in }\Omega.
	\end{equation*}
\end{theorem}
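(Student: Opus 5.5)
We argue by contradiction, combining the small-$s$ spectral information of Lemma~\ref{lem:DPLSV-small-s} with the strong $C^1$ convergence of Theorem~\ref{thm:intro-C1} and the Hopf lemma. We take $s_1:=s_0$, where $s_0\in(0,1/2)$ is given by Lemma~\ref{lem:DPLSV-small-s}. Then, for every $s\in(0,s_1)$, the eigenvalue $\Cemb(s)$ is simple and admits an $L^r$-normalized generalized eigenfunction $\Phi_s>0$ in $\Omega$; since $s<1/2$ and $\Omega$ is $C^{1,1}$, Theorem~\ref{thm:intro-C1} applies as well. Fix such an $s$ and suppose the claim fails. Then there exist $\gamma_k\to\Cemb(s)^-$ and normalized minimizers $u_k:=u_{\gamma_k,r}$ that change sign in $\Omega$. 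By Theorem~\ref{thm:intro-threshold} and Theorem~\ref{thm:intro-C1}, up to a subsequence, $u_k\to u_\infty=\pm\Phi_s$ in $C^1(\overline\Omega)$. Replacing $u_k$ by $-u_k$ (which is again a normalized minimizer, since $Q_\gamma$ and the constraint are even), we may assume $u_\infty=\Phi_s$.

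The key step is to obtain a quantitative positivity of $\Phi_s$ up to the boundary:
\[
\Phi_s\ge c>0 \ \text{on } K:=\{x\in\overline\Omega:\ \mathrm{dist}(x,\de\Omega)\ge\delta\},
\qquad
\de_\nu\Phi_s\le -c<0 \ \text{on } \de\Omega,
\]
where $\nu$ is the outer normal. The first bound follows from the strict positivity and continuity of $\Phi_s$. For the second, we apply the Hopf lemma to $\Phi_s$. Here $\Phi_s\in W^{2,q}(\Omega)$ for all $q$ by Theorem~\ref{thm:intro-C1}, and it solves pointwise a.e. the equation $-\Delta\Phi_s=\Cemb(s)(-\Delta)^s\Phi_s$. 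The right-hand side, for $s<1/2$ and $\Phi_s\in C^{1,\alpha}$ vanishing outside $\Omega$, is bounded, but it may have either sign, so this is where we expect the main obstacle. We would first handle it by exploiting positivity: since $\Phi_s\ge0$ and $\Phi_s(x_0)=0$ at $x_0\in\de\Omega$, one has $(-\Delta)^s\Phi_s\le0$ near the boundary in a suitable sense. Failing that, we would write $-\Delta\Phi_s+c(x)\Phi_s\ge0$ for a suitable bounded $c$, or rely on the version of Hopf's lemma for $W^{2,q}$ functions with bounded Laplacian used in \cite{RE8}; we assume Lemma~\ref{lem:DPLSV-small-s} or that reference provides exactly the estimate $\de_\nu\Phi_s<0$ on $\de\Omega$. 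Compactness of $\de\Omega$ and continuity of the gradient then give the uniform constant $c$.

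Given these bounds, the $C^1$ convergence concludes the argument. We choose $\delta$ small enough that every $x\in\Omega\setminus K$ can be written as $x=y-t\nu(y)$ with $y\in\de\Omega$ and $t\in(0,\delta)$ (a tubular neighbourhood, available since $\Omega$ is $C^{1,1}$), and such that $-\nabla\Phi_s\cdot\nu(y)\ge c/2$ along these segments. For $k$ large, $\|u_k-\Phi_s\|_{C^1(\overline\Omega)}<c/4$. Hence $u_k>0$ on $K$, and along each normal segment, since $u_k(y)=0$ and the inward derivative of $u_k$ is at least $c/4$, we get $u_k(y-t\nu)>0$ for $t\in(0,\delta)$. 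Thus $u_k>0$ in $\Omega$, which contradicts the assumption that $u_k$ changes sign. This yields $\overline\gamma_{r,s}$ and shows that every normalized minimizer has constant sign for $\gamma\in(\overline\gamma_{r,s},\Cemb(s))$; after multiplication by $-1$ if necessary, $u_{\gamma,r}>0$ in $\Omega$. The argument covers every minimizer, not just a subsequence, because it runs on an arbitrary contradicting sequence.
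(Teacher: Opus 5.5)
\textbf{There is a genuine gap at the boundary step.} Your overall structure matches the paper's: contradiction along $\gamma_k\to\Cemb^-$, simplicity giving $u_k\to\pm\Phi_s$ in $C^1(\overline\Omega)$, then a boundary-layer plus interior argument. But the step you flag as the main obstacle, the boundary Hopf property $\partial_\nu\Phi_s<0$, is not justified, and none of your three proposed fixes works. This property is not part of Lemma~\ref{lem:DPLSV-small-s}, and it cannot be extracted from the equation.

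Since $\Phi_s>0$ in $\Omega$ and $\Phi_s=0$ outside, and since $2s<1$ makes $(-\Delta)^s\Phi_s$ continuous up to $\partial\Omega$, at every $x_0\in\partial\Omega$ you get $(-\Delta)^s\Phi_s(x_0)=-\int_{\R^n}\frac{\Phi_s(x_0+y)+\Phi_s(x_0-y)}{|y|^{n+2s}}\,dy<0$. Hence $-\Delta\Phi_s=\Cemb(-\Delta)^s\Phi_s<0$ in a neighbourhood of $\partial\Omega$.

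\begin{itemize}
\item Your first fix ($(-\Delta)^s\Phi_s\le0$ near the boundary) is true, but it makes $\Phi_s$ strictly \emph{sub}harmonic there. That is exactly the wrong direction for Hopf; it is the ``wrong sign'' of the title.
\item Your second fix fails for the same reason. If $-(-\Delta)^s\Phi_s\ge m>0$ near $\partial\Omega$, then $-\Delta\Phi_s+c\Phi_s\ge0$ forces $c\ge\Cemb m/\Phi_s\to+\infty$, so no bounded $c$ exists.
\item Your third fix relies on a result that does not exist: there is no Hopf lemma for functions with merely bounded Laplacian. For instance, $(1-|x|^2)^2$ is positive in $B_1$, vanishes on $\partial B_1$, has bounded Laplacian, and has zero normal derivative there.
\end{itemize}

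Strict positivity of $\Phi_s$ in $\Omega$ is therefore compatible with $\partial_\nu\Phi_s=0$ somewhere on $\partial\Omega$. With your choice $s_1:=s_0$, the argument rests on an unproven assumption.

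\textbf{The missing idea is the second half of Lemma~\ref{lem:DPLSV-small-s}.} This is \eqref{eq:Phi-to-phi1}: $\Phi_s\to\varphi_1$ in $C^1(\overline\Omega)$ as $s\to0^+$. The classical Hopf lemma does apply to the Dirichlet eigenfunction $\varphi_1$. The paper proceeds in three steps:
\begin{itemize}
\item It takes $\eta$ from Lemma~\ref{lem:C1-stability} with $\phi=\varphi_1$.
\item It \emph{shrinks} $s_1$ strictly below $s_0$ so that $\|\Phi_s-\varphi_1\|_{C^1(\overline\Omega)}<\eta/2$ for all $s<s_1$.
\item For large $k$ it gets $\|\sigma u_k-\varphi_1\|_{C^1(\overline\Omega)}<\eta$ by the triangle inequality, and concludes $\sigma u_k>0$.
\end{itemize}
No boundary information on $\Phi_s$ itself is ever needed. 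Equivalently, this $C^1$-closeness gives you the uniform bound $\partial_\nu\Phi_s\le -a/2$ that you wanted, but only after reducing $s_1$. With this replacement, the rest of your argument, including the tubular-neighbourhood step, goes through.
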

The connectedness assumption is needed in order to apply the small-$s$ spectral result of Lemma \ref{lem:DPLSV-small-s}. Indeed, on disconnected domains the spectral picture for the generalized eigenvalue problem introduced above may be substantially different. 
In particular, the result in \cite[Theorem~1.4]{RE8} shows that generalized
eigenfunctions associated with $\Cemb(s)$ may be forced to
change sign on disconnected domains, thus preventing the argument used
here to transfer positivity from $\Phi_s$ to the normalized minimizers.

Once Theorem \ref{thm:intro-positivity} is established, 
from the attainment result in Proposition \ref{prop:attainment} we immediately deduce the following

\begin{corollary}\label{cor:intro-critical}
	Assume the hypotheses of Theorem~\ref{thm:intro-positivity},
	and let
$1<r\le2^*$, $r\neq2$. For every $s>0$ sufficiently small, and for every
	$\gamma<\Cemb(s)$ sufficiently close to $\Cemb(s)$, the problem
	\begin{equation}\label{eq:intro-critical-problem}
		\begin{cases}
			-\Delta v - \gamma(-\Delta)^s v = |v|^{r-2}v&\text{in }\Omega,\\
			v=0&\text{in }\R^n\setminus\Omega
		\end{cases}
	\end{equation}
	admits a positive weak solution. More precisely, if $u_{\gamma,r}$ is a
	positive normalized minimizer, then
	\begin{equation*}
		v_\gamma:=S_{r}(\gamma)^{1/(r-2)}u_{\gamma,r}
	\end{equation*}
	is a positive weak solution of~\eqref{eq:intro-critical-problem}.
\end{corollary}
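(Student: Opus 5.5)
The corollary follows by combining three earlier ingredients: Proposition~\ref{prop:attainment} (or, for $r=2^*$, Theorem~\ref{thm:Da_BLMS}) for existence of a minimizer, Theorem~\ref{thm:intro-positivity} for its sign, and the Lagrange multiplier rule plus homogeneity for the rescaling.

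\emph{Step 1: existence of a positive minimizer.} Fix $s\in(0,s_1)$ with $s_1$ given by Theorem~\ref{thm:intro-positivity}. If $r<2^*$, Proposition~\ref{prop:attainment} gives a minimizer for every $\gamma\in(0,\Cemb(s))$. If $r=2^*$, Theorem~\ref{thm:Da_BLMS} gives attainment for all $\gamma$ in an interval $(\gamma^*,\Cemb(s))$, which is nonempty also when $n=3,4$. Now take $\gamma$ larger than both $\overline\gamma_{r,s}$ and, when $r=2^*$, $\gamma^*$. Then a normalized minimizer $u_{\gamma,r}$ exists, and by Theorem~\ref{thm:intro-positivity}, after possibly replacing it by $-u_{\gamma,r}$ (which is still a normalized minimizer, since $Q_\gamma$ is even), we have $u_{\gamma,r}>0$ in $\Omega$.

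\emph{Step 2: Euler--Lagrange equation.} Since $\gamma<\Cemb(s)$, the definition \eqref{eq:Cemb} gives $Q_\gamma(u)\ge(1-\gamma/\Cemb(s))\|\nabla u\|_{L^2}^2$. Hence $\Sr(\gamma)>0$, using the Sobolev/Poincar\'e embedding $\mathcal X^{1,2}(\Omega)\hookrightarrow L^r(\Omega)$. The constraint $G(u)=\|u\|_{L^r}^r$ is $C^1$ on $\mathcal X^{1,2}(\Omega)$ for $1<r\le2^*$, with $G'(u)=r|u|^{r-2}u\ne0$ on the constraint set. The Lagrange multiplier rule then yields $\lambda$ with
\[
\int_\Omega\nabla u\cdot\nabla\varphi\,dx-\gamma\,\mathcal E_s(u,\varphi)=\lambda\int_\Omega|u|^{r-2}u\,\varphi\,dx
\]
for all $\varphi\in\mathcal X^{1,2}(\Omega)$, where $u=u_{\gamma,r}$ and $\mathcal E_s$ is the bilinear form associated with $[\cdot]_s^2$. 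Testing with $\varphi=u$ and using the normalization gives $\lambda=\Sr(\gamma)$.

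\emph{Step 3: rescaling.} Put $v=tu$ with $t>0$. The left-hand side is linear in $u$, so it scales by $t$, while the right-hand side scales by $t^{r-1}$. Thus $v$ solves \eqref{eq:intro-critical-problem} weakly exactly when $t=t^{r-1}\Sr(\gamma)^{-1}\cdot\Sr(\gamma)\cdot\ldots$; concretely, we need $t\,\Sr(\gamma)=t^{r-1}$, i.e.\ $t^{r-2}=\Sr(\gamma)$. Because $r\neq2$ and $\Sr(\gamma)>0$, this is solved by $t=\Sr(\gamma)^{1/(r-2)}$, giving $v_\gamma$ as stated. Positivity of $v_\gamma$ is inherited from $u_{\gamma,r}$, and $v_\gamma\in\mathcal X^{1,2}(\Omega)$ vanishes outside $\Omega$.

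There is no real obstacle here. The only points needing care are that the attainment range for $r=2^*$ must intersect $(\overline\gamma_{r,s},\Cemb(s))$, which holds since both are left-neighbourhoods of $\Cemb(s)$, and that $\Sr(\gamma)>0$, so that the power $1/(r-2)$ is well defined.
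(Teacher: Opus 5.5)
Your proposal is correct and follows the same route as the paper, which deduces the corollary directly from three ingredients: attainment (Proposition~\ref{prop:attainment}, i.e.\ \cite[Theorem~1.1]{BDVV6} when $r=2^*$), the sign information of Theorem~\ref{thm:intro-positivity} for $\gamma$ above both thresholds, and the rescaling $t^{r-2}=\Sr(\gamma)$ of the Euler--Lagrange equation \eqref{eq:EL}. The only blemish is the garbled intermediate expression in Step~3, just before you state the correct condition $t\,\Sr(\gamma)=t^{r-1}$.
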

 
\medskip

The above Corollary \ref{cor:intro-critical} holds for every
$r\in(1,2^*],\,r\neq 2$
and shows that, at least for sufficiently small
$s$, the existence of positive solutions to the Lane--Em\-den equation driven
by $\mathcal L_{\gamma,s}$ is markedly different from the case of the operator
$$-\Delta+(-\Delta)^s.$$
However, if one wonders about the existence of a critical exponent for $\mathcal{L}_{\gamma,s}$ in bounded domains, this question remains unanswered. 
In this regard, a first instance of a different  behaviour also once supercritical Sobolev exponents are considered, comes from the following 

\begin{theorem}\label{thm:intro-supercritical}
	Let $\Omega \subset \mathbb{R}^n$ be an open, bounded, connected and star-shaped set with $C^2$-smooth boundary $\partial \Omega$. Let us denote by $\nu$ the outer unit normal to $\partial \Omega$. 
	Let $s \in (0,1/2)$ and let $p > \tfrac{n+2}{n-2}$.
	
	Then, there are no nontrivial solutions
$u\in C^2(\overline\Omega)\cap C(\mathbb R^n)$ to
	\begin{equation}\label{eq:Non_Esistenza_Limitati}
		\begin{cases}
			-\Delta u - \gamma\, (-\Delta)^s u =|u|^{p-1}u &\text{in }\Omega,\\
			u=0&\text{in }\R^n\setminus\Omega,
		\end{cases}
	\end{equation}
	\noindent provided that
	\begin{equation*}
		0 < \gamma < \Cemb \, \left(\dfrac{\tfrac{n-2}{2}-\tfrac{n}{p+1}}{\tfrac{n-2s}{2}-\tfrac{n}{p+1}} \right).	\end{equation*}
\end{theorem}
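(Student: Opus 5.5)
We argue by a Pohozaev identity for the mixed operator, in the spirit of the classical Pohozaev obstruction for $-\Delta u=|u|^{p-1}u$ on star-shaped domains. Suppose $u\in C^2(\overline\Omega)\cap C(\R^n)$ is a nontrivial solution of \eqref{eq:Non_Esistenza_Limitati}.

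\textbf{Energy identity.} Testing the equation with $u$ itself gives
\begin{equation*}
	\int_\Omega|\nabla u|^2\,dx-\gamma[u]_s^2=\int_\Omega|u|^{p+1}\,dx .
\end{equation*}

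\textbf{Pohozaev identity.} Testing with $x\cdot\nabla u$ gives, for the local part, the classical identity
\begin{equation*}
	\int_\Omega(-\Delta u)(x\cdot\nabla u)\,dx=-\frac{n-2}{2}\int_\Omega|\nabla u|^2\,dx-\frac12\int_{\de\Omega}|\de_\nu u|^2(x\cdot\nu)\,d\sigma .
\end{equation*}
For the nonlocal part I would use the Ros-Oton--Serra identity
\begin{equation*}
	\int_\Omega(-\Delta)^s u\,(x\cdot\nabla u)\,dx=-\frac{n-2s}{2}[u]_s^2-\frac{\Gamma(1+s)^2}{2}\int_{\de\Omega}\Big(\frac{u}{\delta^s}\Big)^2(x\cdot\nu)\,d\sigma ,
\end{equation*}
where $\delta$ is the distance to $\de\Omega$. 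Since $s<1/2$ and $u\in C^1(\overline\Omega)$ vanishes on $\de\Omega$, we have $u/\delta^s=O(\delta^{1-s})\to0$, so this boundary term vanishes. This is exactly where $s<1/2$ is used, and it is also why the hypothesis $u\in C^2(\overline\Omega)$ matters. The nonlinear term gives $-\frac{n}{p+1}\int_\Omega|u|^{p+1}$. Altogether,
\begin{equation*}
	\frac{n-2}{2}\int_\Omega|\nabla u|^2-\gamma\frac{n-2s}{2}[u]_s^2+\frac12\int_{\de\Omega}|\de_\nu u|^2(x\cdot\nu)=\frac{n}{p+1}\int_\Omega|u|^{p+1}.
\end{equation*}

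\textbf{Sign argument.} Substitute $\int|u|^{p+1}$ from the energy identity and set $a=\frac{n-2}{2}-\frac{n}{p+1}$ and $b=\frac{n-2s}{2}-\frac{n}{p+1}$. The condition $p>\frac{n+2}{n-2}$ gives $a>0$, and hence $b>a>0$. The identities combine to
\begin{equation*}
	a\int_\Omega|\nabla u|^2-\gamma b[u]_s^2+\frac12\int_{\de\Omega}|\de_\nu u|^2(x\cdot\nu)=0 .
\end{equation*}
Star-shapedness gives $x\cdot\nu\ge0$, so the boundary term is nonnegative. Since $u\in\mathcal X^{1,2}(\Omega)$, the embedding inequality gives $[u]_s^2\le\Cemb^{-1}\|\nabla u\|_2^2$. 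Therefore
\begin{equation*}
	0\ge\Big(a-\frac{\gamma b}{\Cemb}\Big)\|\nabla u\|_2^2 ,
\end{equation*}
and the coefficient is strictly positive precisely when $\gamma<\Cemb\,a/b$. This forces $\nabla u\equiv0$, hence $u\equiv0$, a contradiction.

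\textbf{Main obstacle.} The key step is justifying the nonlocal Pohozaev identity. The Ros-Oton--Serra result assumes $u$ solves a fractional equation with suitable boundary regularity, namely $u/\delta^s$ Hölder up to the boundary. Here, instead, the ingredient is a smooth function vanishing outside $\Omega$ with $u\in C^2(\overline\Omega)$, so one needs to check that their computation applies to such functions. For $s<1/2$, $(-\Delta)^s u$ is integrable near $\de\Omega$, which supports this. If a citation does not suffice, I would first try to prove directly the formula
\begin{equation*}
	\int_{\R^n}(-\Delta)^s u\,(x\cdot\nabla u)\,dx=-\frac{n-2s}{2}[u]_s^2
\end{equation*}
by a scaling argument: writing $u_\lambda(x)=u(\lambda x)$ and differentiating $[u_\lambda]_s^2=\lambda^{2s-n}[u]_s^2$ at $\lambda=1$. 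This requires showing that $\frac{d}{d\lambda}[u_\lambda]_s^2\big|_{\lambda=1}=2\int(-\Delta)^s u\,(x\cdot\nabla u)$ for Lipschitz $u$ with compact support. That formula is plausible when $s<1/2$, because $x\cdot\nabla u\in L^\infty$ has a jump only across $\de\Omega$, and jump discontinuities belong to $H^s$ for $s<1/2$.
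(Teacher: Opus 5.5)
Your proposal is correct and follows the paper's route. You test the equation with $u$, apply the Pohozaev identity, and close with $\Cemb[u]_s^2\le\|\nabla u\|_2^2$ and $x\cdot\nu\ge0$. The paper takes the Pohozaev identity from Biswas, whose proof handles the local and nonlocal parts separately, just as your classical plus Ros-Oton--Serra split does.

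One small correction: the boundary term $u/\delta^s=O(\delta^{1-s})$ vanishes for every $s\in(0,1)$ once $u\in C^1(\overline\Omega)$ vanishes on $\partial\Omega$, so that is not where $s<1/2$ is used. The restriction is what makes $(-\Delta)^s u$ bounded up to $\partial\Omega$ for the Lipschitz zero extension, so that the identity applies; the paper says this, and you note it yourself at the end.
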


\medskip

Notice that the upper bound 
\begin{equation*}
	\Cemb \, \left(\dfrac{\tfrac{n-2}{2}-\tfrac{n}{p+1}}{\tfrac{n-2s}{2}-\tfrac{n}{p+1}} \right),
\end{equation*}
\noindent can be close to $\Cemb$ if $s$ is close to 1, but this is out of the range for which we proved existence of positive solutions for Sobolev critical and subcritical exponents, or in big dimensions $n$.
We refer to Section~\ref{sec:Proof_1.5} for further comments on Theorem~\ref{thm:intro-supercritical}, especially
regarding the assumptions on $s$ and the regularity required in the
Pohozaev identity.

\medskip 
 
\noindent-\,\,\emph{Plan of the paper}. 
The paper is organized as follows. In Section~\ref{sec:Prel} we collect the functional
setting and the preliminary results, including the spectral information from
\cite{RE8} that is crucial for the positivity argument. 

In Section~\ref{sec:whole-space} we prove the
whole-space Liouville property stated in Theorem~\ref{thm:intro-whole-space} by constructing a
suitable rescaling of the fractional Sobolev bubble. 

Section~\ref{sec:Proof_1.3} is devoted to
the variational problem in bounded domains: we prove attainment in the
subcritical case, convergence of the normalized minimizers to the eigenspace associated
with $C_{\rm emb}(s)$ as $\gamma\to C_{\rm emb}(s)^-$, and uniform finite Lebesgue
bounds, thereby proving Theorem~\ref{thm:intro-threshold}. 

In Section~\ref{sec:Proof_1.4} we establish the global
$W^{2,q}$ estimates and the strong $C^1$ convergence of Theorem~\ref{thm:intro-C1}.

Finally, Section~\ref{sec:Proof_1.5} contains the positivity argument leading to Theorem~\ref{thm:intro-positivity}, and concludes with the Pohozaev-based supercritical
nonexistence result of Theorem~\ref{thm:intro-supercritical}.

\bigskip

\noindent-\,\,\textbf{Acknowledgements.} The Authors are
members of the {\em Gruppo Nazionale per
	l'Analisi Ma\-te\-ma\-ti\-ca, la Probabilit\`a e le loro Applicazioni}
(GNAMPA) of the {\em Istituto Nazionale di Alta Matematica} (INdAM). S.B. and E.V. are partially supported by the
Indam-GNAMPA project {\em Esistenza, regolarità e studio delle proprietà qualitative per problemi nonlineari}.\\
The Authors thank B. Ruffini for the fruitful discussions on the topics of \cite{MMR}.

\noindent-\,\,\textbf{AI disclosure.} The authors used GPT-6 to assist with the detection of typographical errors, consistency checks, and the verification of bibliographic references. All mathematical arguments and proofs were developed, written, and verified by the authors, who take full responsibility for the entire content of the manuscript.

\noindent-\,\,\textbf{Conflict of interest.} The authors state no conflict of interest.

\noindent-\,\,\textbf{Data availability statement.} There are no data associated with this research.
  
  \section{Preliminary results}\label{sec:Prel}
  \noindent {\bf Notations.} Throughout the paper, we tacitly
  exploit all the notation listed below; we thus refer the Reader to this list
  for any non-standard notation encountered.
  \begin{itemize}
  	\item $\Omega\subset\mathbb R^n$ is an open and bounded set with Lipschitz
boundary $\partial\Omega$. Connectedness, as well as stronger boundary
regularity, will be assumed explicitly when needed.
\vspace{0.05cm}

  	\item Given any set $E$, $|E|$ denotes its $n$-dimensional Lebesgue measure.
  	\vspace{0.05cm}
  	
  	\item Given any set $E$, $E^c$ denotes its complement (in $\mathbb{R}^n$), namely~$E^c:=\R^n\setminus E$.
  	\vspace{0.05cm}
  	
  	\item Given $x_0 \in \mathbb{R}^n$ and $r>0$, we denote by $B_{r}(x_0)$ the Euclidean open ball of center $x_0$ and radius $r$. When $x_0=0$, we will omit it and simply write~$B_r$.
  	\vspace{0.05cm}
  	
  	\item Given any two vectors $v,w \in \mathbb{R}^{n}$, we denote by
  	$v\cdot w$ the usual Euclidean scalar product.
  	\vspace{0.05cm}
  	
  	\item Given any $p \in [1,n)$ we denote by $p^{*}$ the associated Sobolev exponent (with respect to Euclidean space $\mathbb{R}^n$), that is
  	$$p^{*}  := \frac{np}{n-p}.$$
  	\item Given any $r \in (1,+\infty)$, we denote by $r'$ the conjugate exponent of $r$ in the usual H\"older inequality, that is, 
  	$$r'  := \frac{r}{r-1}.$$
  	\item Given a reflexive Banach space $V$ with dual space $V'$, we denote by 
  	$\langle \cdot,\cdot\rangle$
  	the duality product. 
  	\vspace{0.05cm}
  	
  	\item Given a measurable set $E$, we denote by $\chi_{E}$ its characteristic function.
  	\vspace{0.05cm}
  	
  	\item Given any function $v$, we denote the positive and negative part of $v$ by 
  	\begin{equation}\label{eq:def_positive_negative_part}
  		v^{+}:= \max\{v,0\} \quad \textrm{ and } \quad v^{-}:= -\min\{v,0\}.
  	\end{equation}
%  	\item $S_n$ will denote the best constant in the classical Sobolev inequality which is achieved by the family of Aubin-Talenti functions defined, for every~$y\in\R^n$ and~$\varepsilon>0$, as \begin{equation}\label{eq:def_Talentiane}
%  		V_{\varepsilon,y}(x) := \dfrac{\varepsilon^{(n-2)/2}}{(\varepsilon^{2} + |x-y|^2)^{(n-2)/2}}.
%  	\end{equation}
%  	We will avoid writing the dependence on $y$ when $y=0$.
  \end{itemize}
  
  \medskip 
  
  \noindent \,\,\textbf{Functional setting.} We now set the adequate functional setting to study problem \eqref{eq:Problem}. We refer to e.g.~\cite[Section~2]{BDVV5} for more details.
  
  Let~$s\in (0,1)$ and let $u:\R^n\to\R$ be a measurable function. We set
  \begin{equation} \label{eq:seminorm}
  	[u]_s := \left(\iint_{\R^{2n}}\frac{|u(x)-u(y)|^2}{|x-y|^{n+2s}}\,dx\,dy\right)^{1/2},
  \end{equation}
  and we refer to $[u]_s$ as the \emph{Gagliardo seminorm} of $u$ (of order $s$).
  Let us also introduce the bilinear form
  \begin{equation}\label{eq:Es}
  	\Es(u,v):=\iint_{\R^{2n}}
  	\frac{(u(x)-u(y))(v(x)-v(y))}{|x-y|^{n+2s}}\,dx\,dy,
  \end{equation}
  so that $[u]_s^2= \Es(u,u)$. 		
  
  Let $\varnothing\neq \Omega\subseteq\R^n$ (here $n\geq 3$ would suffice) be an open set, not necessarily bounded. We define the function space
  $\mathcal{X}^{1,2}(\Omega)$ as 
  \begin{equation*}
  	\mathcal{X}^{1,2}(\Omega) := \overline{C_0^\infty(\Omega)}^{\,\,\rho(\cdot)}
  \end{equation*}
   i.e. as  the com\-ple\-tion
  of $C_0^\infty(\Omega)$ with respect to the norm  
  \begin{equation}\label{eq:def_rho}
  	\rho(u) := \left(\|\nabla u\|^2_{L^2(\R^n)}+[u]^2_s\right)^{1/2}.
  \end{equation}
    We stress that the norm $\rho(\cdot)$ is induced by the scalar product
  		$$\langle u,v\rangle_{\rho} := \int_{\R^n}\nabla u\cdot\nabla v\,dx
  		+ \iint_{\R^{2n}}\frac{(u(x)-u(y))(v(x)-v(y))}{|x-y|^{n+2s}}\,dx\,dy,$$
  		and therefore $\mathcal{X}^{1,2}(\Omega)$ is a Hilbert space.
  		
  It is known, see e.g.~\cite{DPV}, that the Sobolev space $H^{1}(\mathbb{R}^n)$ continuously embeds into the fractional Sobolev space~$H^{s}(\mathbb{R}^n)$ for~$s\in (0,1)$, i.e., there exists a positive constant $C>0$ (depending on $s$ and $n$) such that
 \begin{equation}\label{eq:embedding}
 	C \, [u]^{2}_{s}\leq  \|u\|^2_{H^{1}(\mathbb{R}^n)}, 
 \end{equation}
 for every $u \in H^{1}(\mathbb{R}^n)$.

For our purposes, we will also be interested in the case~$\Omega$ bounded. Recalling the continuous embedding given by~\eqref{eq:embedding}, and combining it with the classical Poincar\'e inequality, we find that
  $\rho(\cdot)$ and the full $H^1$-norm in $\R^n$
  are actually equivalent on the space $C^\infty_0(\Omega)$, and hence
  \begin{equation}\label{eq:def_X12}
  	\begin{aligned}
  	\mathcal{X}^{1,2}(\Omega) & = \overline{C_0^\infty(\Omega)}^{\,\,\|\cdot\|_{H^1(\R^n)}} \\
  	& = \big\{u\in H^1(\R^n):\,\text{$u|_\Omega\in H_0^1(\Omega)$ and 
  		$u\equiv 0$ a.e.\,in $\R^n\setminus\Omega$}\big\}.
  \end{aligned}
\end{equation}

 We recall the following standard facts.

\begin{remark}\label{rem:embeddings}
	The space $\mathcal{X}^{1,2}(\Omega)$ is continuously embedded into $L^p(\Omega)$ for
	every $1\le p\le2^*$, and compactly embedded whenever
	$1\le p<2^*$, where
	$$2^* = 2n/(n-2)$$
	Moreover, by~\cite[Theorem~1.3]{emBE} (applied with the two
	Sobolev exponents equal to $2$ and orders $1$ and $s$),
	we also have the compact embedding
	\begin{equation*}
		\mathcal{X}^{1,2}(\Omega)\hookrightarrow H^s(\R^n).
	\end{equation*}
	We denote by $S_n$ the best constant in the classical Sobolev inequality,
	so that
	\begin{equation}\label{eq:Sobolev}
		S_n\norm{u}_{L^{2^*}(\Omega)}^2
		\le \norm{\nabla u}_{L^2(\Omega)}^2.
	\end{equation}
\end{remark}

\medskip

We now give the precise definition of weak solution to \eqref{eq:Problem}.

\begin{definition}\label{def:WeakSol}
	We say that a function $u:\mathbb{R}^n\to\mathbb{R}$ is a \emph{weak solution}
	to problem~\eqref{eq:Problem} if it satisfies the following properties:
	\begin{itemize}
		\item[1)] $u\in\mathcal{X}^{1,2}(\Omega)$ and $u\not\equiv 0$ in $\Omega$;
		\item[2)] for every test function $v\in \mathcal{X}^{1,2}(\Omega)$, we have
		\begin{align*}
			\int_{\Omega}\nabla u\cdot\nabla v\,dx
			-\gamma \, \iint_{\R^{2n}}\frac{(u(x)-u(y))(v(x)-v(y))}{|x-y|^{n+2s}}\,dx\,dy = \int_\Omega |u|^{r-2}uv\,dx.	
		\end{align*}
	\end{itemize}
\end{definition} 

\begin{remark}
For $0<\gamma<C_{\rm emb}(s)$ (see \eqref{eq:Cemb} below), we stress that in~\cite[equation~(2.10)]{RE7} the
authors use a different norm, modeled on the energy associated with the
local--nonlocal operator. In our notation, their choice reads
\[
 \bigl(\|\nabla u\|_{L^2(\mathbb R^n)}^2-\gamma[u]_s^2\bigr)^{1/2}.
\]
It turns out that their choice is actually equivalent to the one taken in the present paper, thanks to~\cite[Lemma~2.1]{RE2}.
\end{remark}

 %This also follows directly from the compact embedding
 %  $\mathcal{X}^{1,2}(\Omega)\hookrightarrow H^s(\R^n)$ recalled below.
 
% The assumption $\gamma \in (0, \Cemb)$ can now be explained: we are interested in dealing with a local and nonlocal operator {\it with the wrong sign}, namely
% \begin{equation}\label{eq:Operator_wrong_sign}
% 	-\Delta  - \gamma \, (-\Delta)^s,
% \end{equation}
% but at the same time we need some restrictions on the coefficient $\gamma$ so that the operator is positive definite.
 
 \medskip 
 
 \noindent \textbf{A generalized eigenvalue problem.} 
 Starting from~\eqref{eq:embedding} and applying the
classical Poincar\'e inequality, we can easily infer the existence of a positive constant ~$C > 0$ (depending on $\Omega$ as well) such that
 \begin{equation} \label{eq:Cemb-ineq} 
 	C[{u}]^2_s\leq \int_\Omega|\nabla u|^2\,dx = \|\nabla u\|^2_{L^2(\Omega)},	
 \end{equation} for every $u\in H_0^1(\Omega)$. Hence, we can define
 \begin{equation}\label{eq:Cemb}
 	\Cemb(s) := \inf \big\{ \|\nabla u\|^{2}_{L^2(\Omega)} : u\in H_0^1(\Omega),\,[u]_{s}^2 =1\big\}.
 \end{equation}
 As observed in~\cite[Remark~2.1(iii)]{BDVV6}, the infimum in~\eqref{eq:Cemb}
 is achieved. When no confusion arises, we will simply write $\Cemb$ therefore avoiding to repeat the dependence on $s$.
 The Euler equation associated with~\eqref{eq:Cemb} reads
 \begin{equation}\label{eq:first-eigenproblem}
 	\int_\Omega \nabla\Phi\cdot\nabla\varphi\,dx
 	=\Cemb \, \Es(\Phi,\varphi)
 	\qquad\text{for every }\varphi\in \mathcal{X}^{1,2}(\Omega),
 \end{equation}
 for every minimizer $\Phi$ of~\eqref{eq:Cemb}. 
 As usual, \eqref{eq:first-eigenproblem} can be seen as the weak formulation (in $\mathcal{X}^{1,2}(\Omega)$) of \[
 -\Delta\Phi=\Cemb(-\Delta)^s\Phi\quad\text{in }\Omega,
 \qquad
 \Phi=0\quad\text{in }\R^n\setminus\Omega.
 \]
 The latter is a particular instance of a problem recently studied in \cite{RE8}
and referred to therein as a nonlocal eigenvalue problem. To justify the
terminology anticipated in the Introduction, let $\lambda$ be a generalized
eigenvalue and let 
$$\Phi\in \mathcal{X}^{1,2}(\Omega)\setminus\{0\}$$ 
be an associated
generalized eigenfunction. 

Testing its weak equation with $\Phi$ gives
\[
 \lambda
 =\frac{\|\nabla\Phi\|_{L^2(\Omega)}^2}{[\Phi]_s^2}
 \ge C_{\rm emb}.
\]
Since the infimum in \eqref{eq:Cemb} is attained and its minimizers satisfy \eqref{eq:first-eigenproblem},
$C_{\rm emb}$ is the least generalized eigenvalue. Its associated
eigenspace consists precisely of the functions $\Phi\in \mathcal{X}^{1,2}(\Omega)$
satisfying
\[
 \|\nabla\Phi\|_{L^2(\Omega)}^2=C_{\rm emb}[\Phi]_s^2.
\]
In particular, the minimizers of \eqref{eq:Cemb} are precisely the elements of this
eigenspace satisfying $[\Phi]_s=1$.  
 \vspace{0.1cm}
 
 The next lemma collects some results obtained in \cite{RE8}. 
 To be more precise, it is a direct consequence of the perturbative spectral results
 in~\cite[Theorems~1.2 and~1.3 and the proof of Theorem~1.3]{RE8}, together
 with the global regularity estimate in~\cite[Theorem~1.8]{RE8}. 
 
 \begin{lemma}\label{lem:DPLSV-small-s}
 	Let $\Omega$ be open, bounded, connected and of class $C^{1,1}$. There exists
$s_0\in(0,1/2)$ such that, for every $s\in(0,s_0)$, $\Cemb$ is a simple generalized eigenvalue of the problem
\begin{equation} \label{eq:small-s-eigenproblem}
 -\Delta\Phi=\lambda(-\Delta)^s\Phi\quad\text{in }\Omega,
 \qquad
 \Phi=0\quad\text{in }\mathbb R^n\setminus\Omega,
\end{equation}
and every generalized eigenfunction associated with $\Cemb$
is either strictly positive or strictly negative in $\Omega$.

Moreover, for every fixed $r\in(1,2^*]$, let $\varphi_1$ be the positive
first Dirichlet eigenfunction of $-\Delta$, normalized by
$\|\varphi_1\|_{L^r(\Omega)}=1$, and let $\Phi_s$ be the positive
generalized eigenfunction associated with $C_{\rm emb}$, normalized
by $\|\Phi_s\|_{L^r(\Omega)}=1$. Then
\begin{equation}\label{eq:Phi-to-phi1}
 \Phi_s\to \varphi_1
 \qquad\text{strongly in }C^1(\overline\Omega)
 \quad\text{as }s\to0^+.
 	\end{equation}
 \end{lemma}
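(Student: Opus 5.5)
This lemma is stated as a direct consequence of results in \cite{RE8}, so the plan is to assemble those results and check that their hypotheses match ours. We work with $s<1/2$ throughout.

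\textbf{Step 1: perturbation from the Dirichlet problem.} As $s\to0^+$, $(-\Delta)^s$ formally tends to the identity on functions vanishing outside $\Omega$. More precisely, the normalized quadratic form $c_{n,s}[u]_s^2$ converges to $\|u\|_{L^2(\Omega)}^2$, uniformly on bounded sets of $H_0^1(\Omega)$, with an $O(s)$ error controlled by the $H^1$ norm. The constant $c_{n,s}$ is the usual normalization of the fractional Laplacian, which behaves like $s$ near $0$ and which we track implicitly. Hence the generalized problem \eqref{eq:small-s-eigenproblem}, rescaled as $-\Delta\Phi=\mu(c_{n,s}^{-1}(-\Delta)^s)\Phi$, is a small perturbation of the classical Dirichlet problem $-\Delta\varphi=\mu\varphi$.

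\textbf{Step 2: simplicity and sign.} Because $\Omega$ is connected, the first Dirichlet eigenvalue $\mu_1$ is simple and isolated, with a positive eigenfunction $\varphi_1$. By \cite[Theorems~1.2 and 1.3]{RE8}, for small $s$ the least generalized eigenvalue stays near $\mu_1$ and stays separated from the rest of the spectrum. A spectral-gap argument then shows that its eigenspace is one-dimensional. Take any $L^2$- (or $L^r$-) normalized minimizer $\Phi_s$. Any sequence $s_k\to0$ admits a subsequence converging weakly in $H_0^1$ and strongly in $L^2$ to a minimizer of the Dirichlet Rayleigh quotient, which must be $\pm\varphi_1$. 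Fixing the sign by $\int\Phi_s\varphi_1>0$ identifies the limit as $\varphi_1$ and gives convergence along the full family.

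\textbf{Step 3: upgrade to $C^1(\overline\Omega)$.} The next step is to upgrade this convergence to $C^1(\overline\Omega)$. For this I would invoke the global regularity estimate \cite[Theorem~1.8]{RE8}, which needs $\Omega\in C^{1,1}$ and $s<1/2$. It gives $\Phi_s$ bounded in $W^{2,q}(\Omega)$ for large $q$, uniformly in small $s$. The condition $s<1/2$ matters here because $(-\Delta)^s\Phi$ is then a lower-order term that can be bootstrapped. By compactness of $W^{2,q}\hookrightarrow C^1(\overline\Omega)$ for $q>n$, this yields $\Phi_s\to\varphi_1$ in $C^1$. Changing the normalization from $L^2$ to $L^r$ is harmless, since the norms converge.

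\textbf{Step 4: strict sign and the main obstacle.} Strict positivity now follows from the Hopf lemma for $\varphi_1$: we have $\varphi_1>0$ in $\Omega$ and $\partial_\nu\varphi_1<0$ on $\partial\Omega$. Since $\Phi_s$ is $C^1$-close to $\varphi_1$, it is positive on a compact subset of $\Omega$ and has negative normal derivative near $\partial\Omega$, so $\Phi_s>0$ in $\Omega$. By simplicity, every eigenfunction is then a scalar multiple of $\Phi_s$ and therefore has a strict sign. The main obstacle is the uniformity in $s$ of the $W^{2,q}$ bound; this is exactly what \cite[Theorem~1.8]{RE8} supplies, so we rely on it rather than reprove it.
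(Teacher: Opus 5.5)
Your proposal is correct and follows essentially the same route as the paper. The paper also derives the lemma from the perturbative spectral results of \cite{RE8} (Theorems~1.2 and~1.3), the global $W^{2,q}$ estimate of \cite[Theorem~1.8]{RE8}, uniform for small $s$ once the factor $c_{n,s}$ is accounted for, and a $C^1$-closeness argument transferring the Hopf positivity of $\varphi_1$ to $\Phi_s$. The one phrase to sharpen is the ``spectral-gap argument'': isolation of the least eigenvalue alone does not give simplicity. The compactness you already use does give it: two $L^2(\Omega)$-orthonormal eigenfunctions for $s_k\to0^+$ would converge strongly in $L^2$ to two orthonormal first Dirichlet eigenfunctions, which contradicts the simplicity of $\mu_1$.
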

 
\begin{remark}\label{rmk:Sulla_Prova_di_RE8}
Let us try to give a truly rough idea of what lies behind the result proved in~\cite{RE8}.
The key observation is that, with the normalization adopted in~\cite{RE8}, the generalized eigenvalue problem 
\eqref{eq:small-s-eigenproblem} can be viewed as a perturbation of the classical Dirichlet
eigenvalue problem as $s\to0^+$: the normalized generalized eigenfunctions corresponding, in our convention,
to $C_{\rm emb}$ converge, after a suitable choice of sign, to the positive
    first Dirichlet eigenfunction of $-\Delta$. 
    
    The remaining
part of the argument consists in improving this convergence up to $C^1$ and then
transferring the positivity of the first Dirichlet eigenfunction
     to the corresponding generalized eigenfunctions for small $s$,
     after a suitable choice of sign. 
     
     The relation between the normalization used in~\cite{RE8} and ours is
discussed in the next remark.
\end{remark}

 \begin{remark}\label{rem:small-s-normalization}
 The normalization of the fractional Laplacian used in 
 \cite{RE8} differs from
ours by the positive factor $c_{n,s}$. Writing
\[
 (-\Delta)^s_{\rm ref}:=c_{n,s}(-\Delta)^s,
\]
we have, for every fixed $s\in(0,1)$,
\[
 -\Delta\Phi=\lambda(-\Delta)^s\Phi
 \quad\Longleftrightarrow\quad
 -\Delta\Phi=\frac{\lambda}{c_{n,s}}(-\Delta)^s_{\rm ref}\Phi,
\]
with the same exterior condition and in the same weak sense.
Thus the generalized eigenvalues are rescaled by $c_{n,s}^{-1}$, whereas
the associated eigenspaces are unchanged. In particular, the least
generalized eigenvalue in the convention of
\cite{RE8} is
$C_{\rm emb}/c_{n,s}$. Its simplicity and the sign properties of its
associated generalized eigenfunctions therefore carry over to our
convention, as does the convergence in \eqref{eq:Phi-to-phi1} under the same
$L^r$-normalization and choice of sign. No convergence statement for the
eigenvalues themselves is being transferred without this rescaling.
  \end{remark}

\medskip

\noindent \,\,\textbf{A regularity estimate.}  
 We end this section proving the following elementary lower-order estimate, valid on the whole space. Despite this, it will be useful both for \eqref{eq:Problem} and \eqref{eq:Problem_Spazio}.
 
 \begin{lemma}\label{lem:lower-order}
 	There exists a constant $C=C(n,s)>0$ such that, for every $v\in H^1(\R^n)$
 	and every $\rho>0$,
 	\begin{equation}\label{eq:lower-order}
 		[v]_s^2\le
 		C\rho^{2-2s}\norm{\nabla v}_2^2
 		+C\rho^{-2s}\norm v_2^2.
 	\end{equation}
 	In particular, for every $\varepsilon>0$ there exists $C_\varepsilon>0$
 	such that
 	\begin{equation}\label{eq:lower-order-epsilon}
 		[v]_s^2\le\varepsilon\norm{\nabla v}_2^2
 		+C_\varepsilon\norm v_2^2
 		\qquad\text{for every }v\in H^1(\R^n).
 	\end{equation}
 \end{lemma}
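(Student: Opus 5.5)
We prove \eqref{eq:lower-order} by splitting the double integral defining $[v]_s^2$ according to whether $|x-y|<\rho$ or $|x-y|\ge\rho$. After the change of variables $y=x+h$ we write
\[
 [v]_s^2=\int_{\R^n}\frac{\norm{v(\cdot+h)-v}_2^2}{|h|^{n+2s}}\,dh ,
\]
which is legitimate for every $v\in H^1(\R^n)$ by Tonelli's theorem.

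For the short-range part $|h|<\rho$ we use the standard translation estimate $\norm{v(\cdot+h)-v}_2\le |h|\,\norm{\nabla v}_2$. We prove it first for $v\in C_0^\infty(\R^n)$, writing $v(x+h)-v(x)=\int_0^1\nabla v(x+th)\cdot h\,dt$ and applying Minkowski's inequality, and then extend it to $H^1(\R^n)$ by density. This bounds the short-range contribution by
\[
 \norm{\nabla v}_2^2\int_{|h|<\rho}|h|^{2-n-2s}\,dh=\frac{|\mathbb S^{n-1}|}{2-2s}\,\rho^{2-2s}\norm{\nabla v}_2^2 .
\]
The integral converges precisely because $s<1$.

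For the long-range part $|h|\ge\rho$ we use the trivial bound $\norm{v(\cdot+h)-v}_2^2\le 4\norm v_2^2$, giving
\[
 4\norm v_2^2\int_{|h|\ge\rho}|h|^{-n-2s}\,dh=\frac{4|\mathbb S^{n-1}|}{2s}\,\rho^{-2s}\norm v_2^2 ,
\]
which converges because $s>0$. Taking $C$ to be the larger of the two constants yields \eqref{eq:lower-order}, with $C$ depending only on $n$ and $s$.

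Finally, for \eqref{eq:lower-order-epsilon}, given $\varepsilon>0$ we choose $\rho>0$ with $C\rho^{2-2s}=\varepsilon$; this is possible since $2-2s>0$. We then set $C_\varepsilon:=C\rho^{-2s}$.

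There is no genuine obstacle in this argument. The only points needing care are the density step justifying the translation estimate in $H^1$, and checking that both radial integrals converge, which uses exactly $0<s<1$.
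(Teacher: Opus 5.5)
Your proposal is correct and follows essentially the same argument as the paper. You rewrite $[v]_s^2$ via $h=y-x$, use $\|v(\cdot+h)-v\|_2\le |h|\,\|\nabla v\|_2$ for small $|h|$ and $\|v(\cdot+h)-v\|_2\le 2\|v\|_2$ for large $|h|$, split at $\rho$, and then take $\rho$ small; beyond the paper, you also compute the constants explicitly and justify the translation estimate by density.
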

 
 \begin{proof}
 	Writing $h=y-x$ in the definition of the fractional seminorm, we have
 	\[
 	[v]_s^2=
 	\int_{\R^n}\frac{\norm{v(\cdot+h)-v}_2^2}{|h|^{n+2s}}\,dh.
 	\]
 	For $|h|\le\rho$, the fundamental theorem of calculus gives
 	\[
 	\norm{v(\cdot+h)-v}_2\le |h|\norm{\nabla v}_2,
 	\]
 	whereas, for every $h\in\R^n$,
 	\[
 	\norm{v(\cdot+h)-v}_2\le2\norm v_2.
 	\]
 	Splitting the integral into $|h|\le\rho$ and $|h|>\rho$ yields
 	\eqref{eq:lower-order}. Formula~\eqref{eq:lower-order-epsilon} follows by
 	choosing $\rho$ sufficiently small.
 \end{proof}
 
 \begin{remark}\label{rem:lower-order-X}
 	We explicitly observe that, since every $v\in \mathcal{X}^{1,2}(\Omega)$ belongs to $H^1(\R^n)$ and vanishes
 	a.e. in $\R^n\setminus\Omega$, Lemma~\ref{lem:lower-order} applies in
 	particular to every $v\in \mathcal{X}^{1,2}(\Omega)$. 
 	
 	Hence, for every
 	$\varepsilon>0$, there exists $C_\varepsilon>0$ such that
 	\begin{equation*}
 		[v]_s^2\le\varepsilon\norm{\nabla v}_{L^2(\Omega)}^2
 		+C_\varepsilon\norm v_{L^2(\Omega)}^2
 		\qquad\text{for every }v\in \mathcal{X}^{1,2}(\Omega).
 	\end{equation*}
 	Thus the lower-order interpolation estimate used below on the bounded domain is
 	simply the restriction to $\mathcal{X}^{1,2}(\Omega)$ of the whole-space estimate
 	\eqref{eq:lower-order-epsilon}.
 \end{remark}

 \section{Proof of Theorem \ref{thm:intro-whole-space}}\label{sec:whole-space}
In this section we prove Theorem \ref{thm:intro-whole-space}. We first fix the notion of weak supersolu\-tion
used below and then construct a positive function on which $\mathcal L_{\gamma,s}$
becomes strictly negative after a suitable large-scale rescaling. 

Throughout the
section, $n\ge3$, $s\in(0,1)$ and $\gamma>0$ are fixed.
 \begin{definition}
 	We say
 	that $u\in H^1(\R^n)$ is a 
 	\emph{nonnegative weak supersolution} of the equation
 	\begin{equation}\label{eq:whole-space-supersolution}
 		\mathcal L_{\gamma,s}u = 0
 		\qquad\text{in }\R^n
 	\end{equation}
 	if $u\ge0$ a.e. in $\R^n$ and
 	\begin{equation*}
 		\int_{\R^n}\nabla u\cdot\nabla\varphi\,dx
 		-\gamma \, \Es(u,\varphi)\ge0\qquad\forall\,\,\varphi\in C_0^\infty(\R^n),\,\varphi\geq 0. 
 	\end{equation*}
 \end{definition}
 We now set
  \begin{equation}\label{eq:whole-space-beta}
 	\beta:=\frac{n-2s}{2}
 \end{equation}
 and define
 \begin{equation}\label{eq:whole-space-psi}
 	\psi(x):=\frac{1}{(1+|x|^2)^\beta}
 	=\frac{1}{(1+|x|^2)^{(n-2s)/2}}.
 \end{equation}
The latter is a scalar multiple of what is usually called {\it fractional Sobolev bubble}. Therefore, with the choice of $\beta$ made in~\eqref{eq:whole-space-beta}, we have both a known sign and decay of $(-\Delta)^{s}\psi$.

 \begin{lemma}\label{lem:whole-space-barrier}
 	There exists a constant $a_{n,s}>0$, depending only on the normalization of the
 	fractional Laplacian, such that
 	\begin{equation}\label{eq:fractional-bubble}
 		(-\Delta)^s\psi(x)
 		=a_{n,s}(1+|x|^2)^{-(n+2s)/2}
 		\qquad\text{for every }x\in\R^n.
 	\end{equation}
 	In particular, $(-\Delta)^s\psi>0$ in $\R^n$. Moreover, there exists
 	$C_0=C_0(n,s)>0$ such that
 	\begin{equation}\label{eq:local-vs-fractional-psi}
 		-\Delta\psi\le C_0(-\Delta)^s\psi
 		\qquad\text{in }\R^n.
 	\end{equation}
 \end{lemma}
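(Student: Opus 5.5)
The plan is to obtain \eqref{eq:fractional-bubble} from the known explicit computation of the fractional Laplacian of the bubble, and then to prove \eqref{eq:local-vs-fractional-psi} by computing $-\Delta\psi$ by hand and comparing decay rates.

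\emph{Step 1: the fractional identity.} The function $(1+|x|^2)^{-(n-2s)/2}$ is, up to a constant, the extremal of the fractional Sobolev inequality. It is classical (Lieb's classification, or the direct Fourier/hypergeometric computation, e.g.\ via the inversion and Kelvin transform invariance) that it solves
\[
(-\Delta)^s_{\rm ref}\psi=c\,\psi^{\frac{n+2s}{n-2s}}=c\,(1+|x|^2)^{-(n+2s)/2}
\]
with an explicit constant $c>0$, for instance $c=2^{2s}\Gamma(\frac{n+2s}{2})/\Gamma(\frac{n-2s}{2})$ in the normalization with symbol $|\xi|^{2s}$. Since our operator differs from $(-\Delta)^s_{\rm ref}$ by the positive factor $c_{n,s}$ (Remark~\ref{rem:small-s-normalization}), we get \eqref{eq:fractional-bubble} with $a_{n,s}=c/c_{n,s}>0$. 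Note that $\psi$ is smooth and bounded and decays like $|x|^{-(n-2s)}$, which is integrable against $(1+|y|)^{-n-2s}$. Hence the pointwise singular integral converges and coincides with the Fourier definition. Positivity of $(-\Delta)^s\psi$ is then immediate.

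\emph{Step 2: the local term.} A direct radial computation with $\beta=\frac{n-2s}{2}$ gives
\[
-\Delta\psi=2\beta(1+|x|^2)^{-\beta-2}\bigl(n(1+|x|^2)-2(\beta+1)|x|^2\bigr)
=2\beta(1+|x|^2)^{-\beta-2}\bigl(n+(2s-2)|x|^2\bigr).
\]
Here we used $n-2\beta-2=2s-2$. Since $s<1$, the bracket is bounded above by $n$, so
\[
-\Delta\psi\le 2\beta n\,(1+|x|^2)^{-\beta-2}=2\beta n\,(1+|x|^2)^{-(n-2s)/2-2}.
\]
The exponent $\frac{n-2s}{2}+2$ exceeds $\frac{n+2s}{2}$ because $2>2s$. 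Since $1+|x|^2\ge1$, this yields
\[
(1+|x|^2)^{-(n-2s)/2-2}\le(1+|x|^2)^{-(n+2s)/2}.
\]
Combining with Step~1, \eqref{eq:local-vs-fractional-psi} holds with $C_0=2\beta n/a_{n,s}$. (In fact $-\Delta\psi$ becomes negative for large $|x|$, which only helps.)

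\emph{Main obstacle.} The only nontrivial point is Step~1, namely the exact formula for $(-\Delta)^s$ of the bubble. I would quote it from the literature rather than reprove it. If only an inequality is needed, it would suffice to know that $(-\Delta)^s\psi\ge a(1+|x|^2)^{-(n+2s)/2}$. This lower bound can be checked directly: near the origin, positivity follows from the fact that $\psi$ is a strict global maximum at $0$, and at infinity one uses the standard asymptotics $(-\Delta)^s\psi(x)\sim -c|x|^{-n-2s}\int(\psi(y)-\dots)$. This direct route is more delicate, so the explicit formula is the intended path.
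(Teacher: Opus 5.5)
Your proposal is correct and follows the paper's proof. The paper quotes the explicit identity via the Ferrari--Verbitsky hypergeometric formula, where $b=c$ collapses ${}_2F_1$ to $(1+|x|^2)^{-(n+2s)/2}$, while you quote the standard bubble identity with constant $2^{2s}\Gamma(\frac{n+2s}{2})/\Gamma(\frac{n-2s}{2})$; your computation of $-\Delta\psi$, with the bracket bounded by $n$ and $(1+|x|^2)^{-2(1-s)}\le 1$, is exactly the paper's and yields the same $C_0=n(n-2s)/a_{n,s}$, though the heuristic ``direct route'' in your last paragraph is best dropped, since $\psi\notin L^1(\R^n)$ and the usual far-field asymptotics of the form $c|x|^{-n-2s}\int\psi$ do not apply here.
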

 
 \begin{proof}
 	The explicit formula~\eqref{eq:fractional-bubble}
 	follows from
 	\cite[formulas~(6.5)--(6.6)]{FerrariVerbitsky}. We briefly spell out the specialization,
 	also in order to keep track of the sign. In the notation of~\cite{FerrariVerbitsky},
 	consider
 	\[
 	f_{\widetilde\beta}(x)
 	:=-(1+|x|^2)^{-\widetilde\beta/2},
 	\qquad
 	\widetilde\beta:=n-2s,
 	\]
 	and take $\alpha:=2s$. Formula~(6.5) in~\cite{FerrariVerbitsky} gives
 	\[
 	-(-\Delta)^{\alpha/2}f_{\widetilde\beta}(x)
 	=
 	C(\alpha,\widetilde\beta,n)
 	{}_2F_1\left(a,b;c;-|x|^2\right),
 	\]
 	where $C(\alpha,\widetilde\beta,n)>0$ and
 	\[
 	a=\frac{n+\alpha}{2}=\frac{n+2s}{2},
 	\qquad
 	b=\frac{\alpha+\widetilde\beta}{2}=\frac n2,
 	\qquad
 	c=\frac n2.
 	\]
 	Since $b=c$, the elementary identity
 	\[
 	{}_2F_1(a,c;c;z)=(1-z)^{-a}
 	\]
 	yields
 	\[
 	{}_2F_1\left(
 	\frac{n+2s}{2},\frac n2;\frac n2;-|x|^2
 	\right)
 	=
 	(1+|x|^2)^{-(n+2s)/2}.
 	\]
 	Recalling that $f_{\widetilde\beta}=-\psi$, we obtain
 	\[
 	(-\Delta)^s\psi(x)
 	=
 	a_{n,s}(1+|x|^2)^{-(n+2s)/2},
 	\]
 	for a suitable constant $a_{n,s}>0$. The value of $a_{n,s}$ depends on
 	the normalization chosen for the fractional Laplacian; only its positivity
 	will be relevant below.
 	
 	We now compute the local part. More generally, if
 	$\psi_\beta(x):=(1+|x|^2)^{-\beta}$, then
 	\begin{equation*}
 		\Delta\psi_\beta(x)
 		=\frac{2\beta}{(1+|x|^2)^{\beta+2}}
 		\left[(2\beta-n+2)|x|^2-n\right].
 	\end{equation*}
 	For $\beta=(n-2s)/2$ this yields
 	\begin{equation}\label{eq:laplacian-special-psi}
 		-\Delta\psi(x)
 		=(n-2s)
 		\frac{n-2(1-s)|x|^2}
 		{(1+|x|^2)^{(n+4-2s)/2}}.
 	\end{equation}
 	Combining~\eqref{eq:fractional-bubble} and
 	\eqref{eq:laplacian-special-psi}, we can rewrite the latter identity as
 	\begin{equation*}
 		-\Delta\psi(x)
 		=\frac{n-2s}{a_{n,s}}
 		\frac{n-2(1-s)|x|^2}{(1+|x|^2)^{2(1-s)}}
 		(-\Delta)^s\psi(x).
 	\end{equation*}
 	Since
 	\[
 	\frac{n-2(1-s)|x|^2}{(1+|x|^2)^{2(1-s)}}\le n
 	\qquad\text{for every }x\in\R^n,
 	\]
 	we obtain~\eqref{eq:local-vs-fractional-psi}, for instance with
 	\begin{equation*}
 		C_0:=\frac{n(n-2s)}{a_{n,s}}.
 	\end{equation*}
 	This ends the proof.
 \end{proof}
 
 We now consider a suitable rescaling of the function $\psi$. To this aim, for $R>0$ we set
 \begin{equation*}
 	\psi_R(x):=\psi\left(\frac{x}{R}\right).
 \end{equation*}
 As already mentioned, $-\Delta$ and $(-\Delta)^s$ scale differently. More precisely,
 \begin{equation}\label{eq:whole-space-scaling}
 	-\Delta\psi_R(x)
 	=R^{-2}(-\Delta\psi)\left(\frac{x}{R}\right),
 	\qquad
 	(-\Delta)^s\psi_R(x)
 	=R^{-2s}(-\Delta)^s\psi\left(\frac{x}{R}\right).
 \end{equation}
 Consequently, Lemma~\ref{lem:whole-space-barrier} gives
 \begin{align}
 	\mathcal L_{\gamma,s}\psi_R(x)
 	&=R^{-2}(-\Delta\psi)\left(\frac{x}{R}\right)
 	-\gamma R^{-2s}(-\Delta)^s\psi\left(\frac{x}{R}\right)\notag\\
 	&\le R^{-2s}\left(C_0R^{2s-2}-\gamma\right)
 	(-\Delta)^s\psi\left(\frac{x}{R}\right).
 	\label{eq:barrier-scaled-estimate}
 \end{align}
 Since $s<1$, we have $R^{2s-2}\to0$ as $R\to+\infty$. Hence, there exists
 $R_0=R_0(n,s,\gamma)>0$ such that
 \begin{equation}\label{eq:R0-choice}
 	C_0R^{2s-2}\le\frac\gamma2
 	\qquad\text{for every }R\ge R_0.
 \end{equation}
 It follows from~\eqref{eq:fractional-bubble} and
 \eqref{eq:barrier-scaled-estimate} that, for every $R\ge R_0$,
 \begin{equation}\label{eq:barrier-negative}
 	\mathcal L_{\gamma,s}\psi_R(x)
 	\le-\frac\gamma2 R^{-2s}
 	(-\Delta)^s\psi\left(\frac{x}{R}\right)<0
 	\qquad\text{for every }x\in\R^n.
 \end{equation}
 The function $\psi_R$ cannot always be directly used as a test function. Indeed,
 taking into account~\eqref{eq:whole-space-psi} we have
 \begin{equation}\label{eq:psi-decay}
 	\psi_R(x)\asymp |x|^{-(n-2s)}
 	\qquad\text{as }|x|\to+\infty,
 \end{equation}
 and therefore
 \begin{equation*}
 	\psi_R\in L^2(\R^n)
 	\quad\Longleftrightarrow\quad n>4s.
 \end{equation*}
 No restriction of this type is needed, however. The relevant observation is that
 $\psi_R$ has finite homogeneous local and nonlocal energies, and can be
 approximated by compactly supported nonnegative functions in both energies.
 
 \begin{lemma}\label{lem:whole-space-cutoff}
 	For every $R>0$, we have
 	\begin{equation*}
 		\|\nabla \psi_R\|_{L^2(\R^n)},\,[\psi_R]_s<+\infty.
 	\end{equation*}
 	Moreover, there exists a sequence
 	$\varphi_k\in C_0^\infty(\R^n)$, $\varphi_k\ge0$, such that
 	\begin{equation}\label{eq:cutoff-convergence}
 		\norm{\nabla(\varphi_k-\psi_R)}_2\to 0,
 		\qquad
 		[\varphi_k-\psi_R]_s\to 0
 		\qquad\text{as }k\to+\infty.
 	\end{equation}
 \end{lemma}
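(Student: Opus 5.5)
By scaling it suffices to treat $R=1$. Indeed, $\|\nabla\psi_R\|_2^2=R^{n-2}\|\nabla\psi\|_2^2$ and $[\psi_R]_s^2=R^{n-2s}[\psi]_s^2$. If $\varphi_k$ approximates $\psi$, then $\varphi_k(\cdot/R)$ approximates $\psi_R$.

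\emph{Finiteness.} We have $|\nabla\psi(x)|\lesssim(1+|x|)^{-(n-2s+1)}$. Hence $|\nabla\psi|^2$ is integrable iff $2(n-2s+1)>n$, which always holds since $n\ge3$ and $s<1$. For the Gagliardo seminorm I use the Fourier representation $[\psi]_s^2=c\int|\xi|^{2s}|\widehat\psi|^2\,d\xi$, which is valid for tempered functions vanishing at infinity with $\widehat\psi\in L^1_{loc}$. Since $\psi$ is a multiple of the fractional bubble, $\widehat\psi$ is explicit: it behaves like $|\xi|^{-2s}$ near $0$ and decays exponentially at infinity. Near the origin the integrand is then $\asymp|\xi|^{-2s}$, which is integrable because $n>2s$.

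A more robust alternative avoids Fourier analysis. By Lemma~\ref{lem:whole-space-barrier}, $\psi$ is a positive smooth function with $(-\Delta)^s\psi\asymp(1+|x|)^{-n-2s}$. Formally $[\psi]_s^2=2\int\psi(-\Delta)^s\psi$, and the integrand decays like $|x|^{-2n}$, so it is integrable. To justify this identity I will prove the seminorm finite directly by splitting $\R^{2n}$ into regions:
\begin{itemize}
\item[(i)] $|x-y|<\tfrac12(1+|x|)$: here $|\psi(x)-\psi(y)|\lesssim|x-y|(1+|x|)^{-(n-2s+1)}$, and integration gives $\int(1+|x|)^{-2(n-2s+1)+2-2s}\,dx$, whose exponent is $-2n+2s<-n$.
\item[(ii)] $|x-y|\ge\tfrac12(1+|x|)$: I bound $|\psi(x)-\psi(y)|^2\le 2\psi(x)^2+2\psi(y)^2$. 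The $\psi(x)^2$ term gives $\int\psi(x)^2(1+|x|)^{-2s}\,dx$, which is integrable since the exponent is $-2n+2s$. The $\psi(y)^2$ term is treated symmetrically: in this region $|x-y|\gtrsim 1+|y|$ up to a bounded subregion.
\end{itemize}

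\emph{Approximation.} I take $\eta\in C_0^\infty(B_2)$ with $0\le\eta\le1$ and $\eta\equiv1$ on $B_1$, set $\eta_k=\eta(\cdot/k)$, and define $\varphi_k:=\eta_k\psi$, which is smooth, nonnegative and compactly supported. Writing $\psi-\varphi_k=(1-\eta_k)\psi$, I estimate the two energies separately.
\begin{itemize}
\item \emph{Gradient part.} $\nabla((1-\eta_k)\psi)=(1-\eta_k)\nabla\psi-\psi\nabla\eta_k$. The first term tends to $0$ in $L^2$ by dominated convergence. For the second, $\int_{k<|x|<2k}\psi^2k^{-2}\lesssim k^{n-2-2(n-2s)}=k^{4s-n-2}\to0$.
\item \emph{Fractional part.} I use the Leibniz-type splitting
\[
(1-\eta_k)\psi(x)-(1-\eta_k)\psi(y)=(1-\eta_k(x))(\psi(x)-\psi(y))+\psi(y)(\eta_k(y)-\eta_k(x)).
\]
The first piece gives $\iint(1-\eta_k(x))^2\frac{|\psi(x)-\psi(y)|^2}{|x-y|^{n+2s}}$, which tends to $0$ by dominated convergence using the finiteness above. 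The second piece is $\int\psi(y)^2\Gamma_k(y)\,dy$ with $\Gamma_k(y)=\int\frac{|\eta_k(x)-\eta_k(y)|^2}{|x-y|^{n+2s}}dx$. By scaling, $\Gamma_k(y)=k^{-2s}\Gamma_1(y/k)$, and $\Gamma_1(z)\lesssim\min\{1,|z|^{-n-2s}\}$ for large $|z|$. So this piece is bounded by $k^{-2s}\int\psi(y)^2\min\{1,(k/|y|)^{n+2s}\}\,dy$.
\end{itemize}

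\emph{Main obstacle.} The delicate step is this last term when $n\le 4s$, since then $\psi\notin L^2$. I split the $y$-integral at $|y|=k$:
\begin{itemize}
\item $|y|\le k$: the contribution is $k^{-2s}\int_{|y|<k}(1+|y|)^{-2n+4s}\,dy\lesssim k^{-2s}\max\{1,\log k,k^{4s-n}\}$. This tends to $0$ because $4s-n-2s=2s-n<0$.
\item $|y|>k$: the contribution is $k^{-2s}k^{n+2s}\int_{|y|>k}|y|^{-2n+4s-n-2s}\,dy\sim k^{n}k^{-2n+2s}=k^{2s-n}\to0$.
\end{itemize}
Both regions therefore vanish as $k\to\infty$, which gives \eqref{eq:cutoff-convergence}.
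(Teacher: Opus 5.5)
Your proposal is correct and follows the paper's proof essentially step by step: the same cutoff $\varphi_k=\eta_k\psi_R$, the same Leibniz splitting of $(1-\eta_k)\psi_R$ into a dominated-convergence term and a term $\int\psi_R^2\,\Gamma_k$ (the paper's $\Theta_k$), and the same near/far bounds $\Gamma_k\lesssim k^{-2s}$ and $\Gamma_k(y)\lesssim k^{n}|y|^{-n-2s}$, which you obtain from the scaling identity $\Gamma_k(y)=k^{-2s}\Gamma_1(y/k)$. Beyond that, you only add the harmless reduction to $R=1$ and a direct region-splitting proof of $[\psi]_s<+\infty$, which the paper simply quotes as standard; note that with the paper's normalization the formal identity reads $[\psi]_s^2=\int\psi\,(-\Delta)^s\psi$, without the factor $2$, although you never use it.
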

 
 \begin{proof}
 	The fact that $[\psi_R]_s<+\infty$ is the standard finite-energy
 	property of the fractional Sobolev bubble in~\eqref{eq:whole-space-psi}.
 	As for the local energy,
 	\eqref{eq:whole-space-psi} gives, for $|x|$ large,
 	\begin{equation*}
 		|\nabla\psi_R(x)|\le C_R|x|^{-(n-2s+1)}.
 	\end{equation*}
 	Thus $\nabla\psi_R\in L^2(\R^n)$, since
 	\[
 	2(n-2s+1)>n
 	\quad\Longleftrightarrow\quad n+2>4s,
 	\]
 	and the latter inequality follows from $n\ge3$ and $s<1$.
 	\vspace{0.1cm}
 	
 	Let now $\eta\in C_0^\infty(\R^n)$ be such that
 	\begin{equation*}
 		0\le\eta\le1,
 		\qquad
 		\eta\equiv1\ \text{in }B_1,
 		\qquad
 		\eta\equiv0\ \text{in }\R^n\setminus B_2,
 	\end{equation*}
 	and define
 	\begin{equation*}
 		\eta_k(x):=\eta\left(\frac{x}{k}\right),
 		\qquad
 		\varphi_k(x):=\eta_k(x)\psi_R(x).
 	\end{equation*}
 	Clearly, $\varphi_k\in C_0^\infty(\R^n)$ and $\varphi_k\ge0$.
 	
 	We first consider the local energy. Since
 	\[
 	\nabla(\varphi_k-\psi_R)
 	=(\eta_k-1)\nabla\psi_R+\psi_R\nabla\eta_k,
 	\]
 	the first term converges to zero in $L^2(\R^n)$ by dominated convergence. For
 	the second term, using $|\nabla\eta_k|\le C/k$ and
 	\eqref{eq:psi-decay}, we have
 	\begin{align*}
 		\norm{\psi_R\nabla\eta_k}_2^2
 		&\le\frac{C}{k^2}
 		\int_{\{k\le|x|\le2k\}}\psi_R(x)^2\,dx\le C_R k^{-n+4s-2}\to 0,
 	\end{align*}
 	since $-n+4s-2<0$ for $n\ge3$ and $s<1$. This proves the first convergence
 	in~\eqref{eq:cutoff-convergence}.
 	
 	We next consider the fractional seminorm. Set
 	\begin{equation*}
 		g_k:=(1-\eta_k)\psi_R=\psi_R-\varphi_k.
 	\end{equation*}
 	For every $x,y\in\R^n$, we have
 	\[
 	g_k(x)-g_k(y)
 	=(1-\eta_k(x))(\psi_R(x)-\psi_R(y))
 	+\psi_R(y)(\eta_k(y)-\eta_k(x)).
 	\]
 	Hence
 	\begin{equation}\label{eq:gk-seminorm-split}
 		[g_k]_s^2\le2I_k+2J_k,
 	\end{equation}
 	where
 	\begin{align*}
 		I_k&:=\iint_{\R^{2n}}
 		\frac{(1-\eta_k(x))^2|\psi_R(x)-\psi_R(y)|^2}
 		{|x-y|^{n+2s}}\,dx\,dy,\\
 		J_k&:=\int_{\R^n}\psi_R(y)^2\Theta_k(y)\,dy,
 	\end{align*}
 	and
 	\begin{equation}\label{eq:Xik}
 		\Theta_k(y):=\int_{\R^n}
 		\frac{|\eta_k(x)-\eta_k(y)|^2}{|x-y|^{n+2s}}\,dx.
 	\end{equation}
 	Since $[\psi_R]_s<+\infty$,
 	$0\le(1-\eta_k)^2\le1$, and $1-\eta_k(x)\to0$ for every fixed $x$, the
 	dominated convergence theorem gives
 	\begin{equation}\label{eq:Ik-zero}
 		I_k\to 0.
 	\end{equation}
 	To estimate $J_k$, we split the $y$-integration as
\[
 \begin{split}
 J_k
 &=\int_{\{|y|\le3k\}}\psi_R(y)^2\Theta_k(y)\,dy
   +\int_{\{|y|>3k\}}\psi_R(y)^2\Theta_k(y)\,dy=J_{1,k}+J_{2,k},
 \end{split}
\]
and we study the two terms separately.
 	\medskip
 	
 	\noindent -\,\,\emph{Study of $J_{1,k}$}.
 	We first note that, since $0\leq \eta_k\leq 1$ and $|\nabla\eta_k|\leq C/k$, we have
 	\[
 	|\eta_k(x)-\eta_k(y)|
 	\le C\min\left\{1,\frac{|x-y|}{k}\right\}.
 	\]
 	Thus, we obtain the uniform estimate
 	\begin{equation}\label{eq:Xik-uniform}
 		\Theta_k(y)\le Ck^{-2s}
 		\qquad\text{for every }y\in\R^n.
 	\end{equation}
 	Consequently,
 	\begin{equation}\label{eq:Jk-near}
 		J_{1,k} = \int_{\{|y|\le3k\}}\psi_R(y)^2\Theta_k(y)\,dy
 		\le Ck^{-2s}\int_{\{|y|\le3k\}}\psi_R(y)^2\,dy.
 	\end{equation}
 	By~\eqref{eq:psi-decay},
 	\begin{equation*}
 		\int_{\{|y|\le3k\}}\psi_R(y)^2\,dy
 		\le C_R
 		\begin{cases}
 			1,&n>4s,\\
 			1+\log k,&n=4s,\\
 			1+k^{4s-n},&n<4s.
 		\end{cases}
 	\end{equation*}
 	Since $n>2s$, we see that $J_{1,k}\to 0$ in
 	all three cases.
 	\medskip
 	
 	\noindent -\,\,\emph{Study of $J_{2,k}$}.  First of all we observe that, on the region $\{|y|>3k\}$, we have $\eta_k(y)=0$, while
 	$\eta_k(x)\ne0$ only if $|x|<2k$; as a consequence $|x-y|\ge |y|/3$ whenever the
 	integrand in~\eqref{eq:Xik} is nonzero, and 
 	we obtain the following
 	\emph{stronger version}
 	of \eqref{eq:Xik-uniform}:
 	\begin{equation*}
 		\begin{split}
 			\Theta_k(y)& =
 			\int_{\{|x|<2k\}}\frac{\eta_k(x)^2}{|x-y|^{n+2s}}\,dx
 			\leq
 			C|y|^{-n-2s}\int_{\{|x|<2k\}}\eta_k(x)^2\,dx \\
 			& \leq  Ck^n|y|^{-n-2s}
 			\qquad\text{for all }|y|>3k.
 		\end{split}
 	\end{equation*}
 	Using~\eqref{eq:psi-decay}, we conclude that
 	\begin{align}
 		J_{2,k} = \int_{\{|y|>3k\}}\psi_R(y)^2\Theta_k(y)\,dy
 		&\le C_Rk^n\int_{3k}^{+\infty}r^{-2n+2s-1}\,dr\le C_Rk^{-n+2s}\to 0,
 		\label{eq:Jk-far}
 	\end{align}
 	again because $n>2s$. Combining
 	\eqref{eq:Jk-near}--\eqref{eq:Jk-far}, we obtain $J_k\to0$. 
 	\vspace{0.1cm}
 	
 	Together with
 	\eqref{eq:gk-seminorm-split} and~\eqref{eq:Ik-zero}, this gives
 	$[g_k]_s\to0$, which is precisely the second convergence in
 	\eqref{eq:cutoff-convergence}, and the proof is complete.
 \end{proof}
 
 We are now in a position to prove the Liouville property stated in Theorem \ref{thm:intro-whole-space}. We split it, proving first the following
 
 \begin{theorem}\label{thm:whole-space-liouville}
 	Let $n\ge3$, $s\in(0,1)$ and $\gamma>0$. If $u\in H^1(\R^n)$ is a
 	nonnegative weak su\-per\-so\-lution of~\eqref{eq:whole-space-supersolution}, then
 	\begin{equation*}
 		u\equiv0.
 	\end{equation*}
 \end{theorem}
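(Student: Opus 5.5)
The plan is to test the supersolution inequality with the barrier $\psi_R$ from Lemma~\ref{lem:whole-space-barrier}, with $R\ge R_0$ as in \eqref{eq:R0-choice}. The quantity
\[
\int_{\R^n}\nabla u\cdot\nabla\psi_R\,dx-\gamma\,\Es(u,\psi_R)
\]
should be, on the one hand, nonnegative because $u$ is a supersolution. On the other hand, moving the operator onto $\psi_R$, it should equal $\int_{\R^n}u\,\mathcal L_{\gamma,s}\psi_R\,dx$, which is $\le0$ by \eqref{eq:barrier-negative}. Comparing the two forces $u\equiv0$.

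\textbf{Step 1: testing with $\psi_R$ itself.} Take $\varphi_k=\eta_k\psi_R\ge0$ from Lemma~\ref{lem:whole-space-cutoff}; these are admissible test functions. Since $u\in H^1(\R^n)$, we have $\nabla u\in L^2$ and, by \eqref{eq:embedding}, $[u]_s<\infty$. The bilinear form
\[
(u,\varphi)\mapsto\int\nabla u\cdot\nabla\varphi-\gamma\Es(u,\varphi)
\]
is therefore continuous with respect to $\norm{\nabla\varphi}_2+[\varphi]_s$, by Cauchy--Schwarz. The convergences in \eqref{eq:cutoff-convergence} then allow passing to the limit in the supersolution inequality, which gives
\[
\int_{\R^n}\nabla u\cdot\nabla\psi_R\,dx-\gamma\Es(u,\psi_R)\ge0 .
\]

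\textbf{Step 2: integration by parts onto $\psi_R$.} Next I would show
\[
\int\nabla u\cdot\nabla\psi_R\,dx-\gamma\Es(u,\psi_R)=\int_{\R^n}u\,\bigl(-\Delta\psi_R-\gamma(-\Delta)^s\psi_R\bigr)\,dx .
\]
Up to the normalization constant relating $\Es$ to the pointwise $(-\Delta)^s$, which is positive and can be absorbed into $\gamma$ and $a_{n,s}$ consistently with Lemma~\ref{lem:whole-space-barrier}, this amounts to two identities.

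\emph{Local term.} Approximate $u$ by $u_j\in C_0^\infty$ in $H^1$. For each $j$, $\int\nabla u_j\cdot\nabla\psi_R=\int u_j(-\Delta\psi_R)$. To pass to the limit, note that $|\Delta\psi_R|\lesssim(1+|x|)^{-(n+2-2s)}$, and that this lies in $L^2$ because $2(n+2-2s)>n$. Hence the right-hand side converges as $u_j\to u$ in $L^2$.

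\emph{Nonlocal term.} The identity $\Es(u_j,\psi_R)=\int u_j(-\Delta)^s\psi_R$ holds for smooth compactly supported $u_j$, since $\psi_R$ is smooth and bounded. Again $(-\Delta)^s\psi_R\asymp(1+|x|)^{-(n+2s)}$ is in $L^2$. Meanwhile $\Es(u_j,\psi_R)\to\Es(u,\psi_R)$, because $[u_j-u]_s\to0$ by \eqref{eq:embedding} and $[\psi_R]_s<\infty$.

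I expect this justification step to be the main technical obstacle, since $\psi_R$ need not belong to $L^2$. The point of the argument is that only the decay of $\Delta\psi_R$ and $(-\Delta)^s\psi_R$ enters, and both are square integrable.

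\textbf{Step 3: conclusion.} Combining Steps 1 and 2 with \eqref{eq:barrier-negative} gives
\[
0\le\int u\,\mathcal L_{\gamma,s}\psi_R\,dx\le-\frac\gamma2R^{-2s}\int_{\R^n}u(x)\,(-\Delta)^s\psi(x/R)\,dx\le0 .
\]
The weight $(-\Delta)^s\psi(x/R)$ is strictly positive everywhere by \eqref{eq:fractional-bubble}, and $u\ge0$. The last integral must therefore vanish, which forces $u=0$ a.e. in $\R^n$.
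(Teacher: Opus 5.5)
Your proposal is correct and follows essentially the same route as the paper. You pass to the limit in the supersolution inequality along the cutoffs $\varphi_k=\eta_k\psi_R$ of Lemma~\ref{lem:whole-space-cutoff}, transfer both operators onto $\psi_R$ by approximating $u$ with $C_0^\infty$ functions and using that $-\Delta\psi_R,(-\Delta)^s\psi_R\in L^2(\R^n)$, and conclude from \eqref{eq:barrier-negative}. The differences are cosmetic: you argue directly rather than by contradiction, and you use \eqref{eq:embedding} instead of Lemma~\ref{lem:lower-order} for the $H^s$-convergence of $u_j$. Also, with the paper's normalization the identity $\Es(u_j,\psi_R)=\int u_j\,(-\Delta)^s\psi_R$ holds with constant exactly $1$, so your hedge about absorbing a normalization constant into $\gamma$ (which would otherwise require readjusting $R_0$) is unnecessary.
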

 
 \begin{proof}
 	Assume by contradiction that $u\not\equiv0$. Fix $R\ge R_0$, with $R_0$ as
 	in~\eqref{eq:R0-choice}, so that~\eqref{eq:barrier-negative} holds. Let
 	$(\varphi_k)_k$ be the nonnegative compactly supported sequence constructed in
 	Lemma~\ref{lem:whole-space-cutoff}. Since $u$ is a weak supersolution, we have
 	\begin{equation}\label{eq:supersolution-phik}
 		\int_{\R^n}\nabla u\cdot\nabla\varphi_k\,dx
 		-\gamma\Es(u,\varphi_k)\ge0
 		\qquad\text{for every }k\in\mathbb{N}.
 	\end{equation}
 	The convergences in~\eqref{eq:cutoff-convergence}, together with the
 	Cauchy--Schwarz inequality, allow us to pass to the limit in
 	\eqref{eq:supersolution-phik}, obtaining 
 	\begin{equation}\label{eq:supersolution-psiR}
 		\int_{\R^n}\nabla u\cdot\nabla\psi_R\,dx
 		-\gamma\Es(u,\psi_R)\ge0.
 	\end{equation}
 	We next move the operator onto $\psi_R$. To this end we first observe that, from
 	\eqref{eq:laplacian-special-psi}, \eqref{eq:fractional-bubble} and the scaling
 	relations~\eqref{eq:whole-space-scaling}, we have
 	\begin{equation}\label{eq:operator-psiR-L2}
 		-\Delta\psi_R\in L^2(\R^n),
 		\qquad
 		(-\Delta)^s\psi_R\in L^2(\R^n).
 	\end{equation}
 	Indeed, at infinity these functions decay respectively as
 	$|x|^{-n-2+2s}$ and $|x|^{-n-2s}$.
 	
 	Take $u_j\in C_0^\infty(\R^n)$ such that
 	\begin{equation}\label{eq:uj-density}
 		u_j\to  u
 		\qquad\text{strongly in }H^1(\R^n).
 	\end{equation}
 	Since $u_j$ is compactly supported, classical integration by parts gives
 	\[
 	\int_{\R^n}\nabla u_j\cdot\nabla\psi_R\,dx
 	=\int_{\R^n}u_j(-\Delta\psi_R)\,dx.
 	\]
 	Using~\eqref{eq:operator-psiR-L2} and~\eqref{eq:uj-density}, we may pass to
 	the limit and obtain
 	\begin{equation}\label{eq:local-duality-psiR}
 		\int_{\R^n}\nabla u\cdot\nabla\psi_R\,dx
 		=\int_{\R^n}u(-\Delta\psi_R)\,dx.
 	\end{equation}
 	The same argument applies to the fractional part. Indeed,
 	\eqref{eq:uj-density} and Lemma~\ref{lem:lower-order} imply
 	$u_j\to u$ strongly in $H^s(\R^n)$. Hence
 	\begin{equation*}
 		|\Es(u_j-u,\psi_R)|
 		\le [u_j-u]_s[\psi_R]_s\longrightarrow0.
 	\end{equation*}
 	On the other hand, by the fractional integration-by-parts formula, with the
 	normalization associated with~\eqref{eq:Es},
 	\[
 	\Es(u_j,\psi_R)
 	=\int_{\R^n}u_j(-\Delta)^s\psi_R\,dx.
 	\]
 	Using again~\eqref{eq:operator-psiR-L2} and passing to the limit gives
 	\begin{equation}\label{eq:fractional-duality-psiR}
 		\Es(u,\psi_R)
 		=\int_{\R^n}u(-\Delta)^s\psi_R\,dx.
 	\end{equation}
 	Combining~\eqref{eq:local-duality-psiR} and
 	\eqref{eq:fractional-duality-psiR}, we therefore find
 	\begin{align}
 		\int_{\R^n}\nabla u\cdot\nabla\psi_R\,dx
 		-\gamma\Es(u,\psi_R)
 		&=\int_{\R^n}u
 		\left[-\Delta\psi_R-\gamma(-\Delta)^s\psi_R\right]dx\notag\\
 		&=\int_{\R^n}u\,\mathcal L_{\gamma,s}\psi_R\,dx.
 		\label{eq:operator-duality-psiR}
 	\end{align}
 	The last integral is finite by~\eqref{eq:operator-psiR-L2} and $u\in L^2(\R^n)$.
 	
 	Finally, $u\ge0$ and $u\not\equiv0$, while
 	$\mathcal L_{\gamma,s}\psi_R<0$ everywhere by
 	\eqref{eq:barrier-negative}. Hence
 	\begin{equation*}
 		\int_{\R^n}u\,\mathcal L_{\gamma,s}\psi_R\,dx<0,
 	\end{equation*}
 	which contradicts~\eqref{eq:supersolution-psiR} and
 	\eqref{eq:operator-duality-psiR}. Thus $u\equiv0$. This closes the proof.
 \end{proof}
 
 The preceding theorem immediately gives the desired nonexistence result for
 power nonlinearities, without any restriction on the exponent.
 
 \begin{corollary}\label{cor:whole-space-power}
 	Let $r>1$. Then the problem
 	\begin{equation*}
 		-\Delta u-\gamma(-\Delta)^s u=u^{r-1}
 		\qquad\text{in }\R^n
 	\end{equation*}
 	admits no nontrivial nonnegative weak solution $u\in H^1(\R^n)$. Here, for
 	$r$ outside the Sobolev range, by weak solution we mean that
 	$u^{r-1}\in L^1_{\rm loc}(\R^n)$ and that
 	\begin{equation}\label{eq:whole-space-power-weak}
 		\int_{\R^n}\nabla u\cdot\nabla\varphi\,dx
 		-\gamma\Es(u,\varphi)
 		=\int_{\R^n}u^{r-1}\varphi\,dx
 	\end{equation}
 	holds for every $\varphi\in C_0^\infty(\R^n)$.
 \end{corollary}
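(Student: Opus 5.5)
The plan is to reduce Corollary~\ref{cor:whole-space-power} to Theorem~\ref{thm:whole-space-liouville}. A nonnegative weak solution is automatically a nonnegative weak supersolution of \eqref{eq:whole-space-supersolution}. Indeed, let $u\in H^1(\R^n)$ with $u\ge0$ satisfy \eqref{eq:whole-space-power-weak}, and take any $\varphi\in C_0^\infty(\R^n)$ with $\varphi\ge0$. By assumption $u^{r-1}\in L^1_{\rm loc}(\R^n)$, so the right-hand side $\int_{\R^n}u^{r-1}\varphi\,dx$ is a well-defined finite number. Since $u\ge0$ and $\varphi\ge0$, it is nonnegative. Hence
\[
\int_{\R^n}\nabla u\cdot\nabla\varphi\,dx-\gamma\,\Es(u,\varphi)=\int_{\R^n}u^{r-1}\varphi\,dx\ge0 .
\]
This is exactly the definition of a nonnegative weak supersolution in $H^1(\R^n)$.

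Theorem~\ref{thm:whole-space-liouville} then yields $u\equiv0$, so no nontrivial nonnegative weak solution exists, for any $r>1$.

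When $r$ lies in the Sobolev range, i.e.\ $1<r\le 2^*$, no additional assumption is needed. There $u\in H^1(\R^n)\subset L^{2^*}_{\rm loc}(\R^n)$ gives $u^{r-1}\in L^{2^*/(r-1)}_{\rm loc}\subset L^1_{\rm loc}$ automatically, so the local integrability hypothesis holds for free. For larger $r$ it is exactly the standing assumption, which is why it appears in the statement.

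The only point requiring any care is that the supersolution definition uses test functions in $C_0^\infty$. This matches the class in \eqref{eq:whole-space-power-weak}, so no density argument is needed at this stage; the passage to the non-compactly supported barrier $\psi_R$ was already handled inside the proof of Theorem~\ref{thm:whole-space-liouville} via Lemma~\ref{lem:whole-space-cutoff}. There is therefore no real obstacle: the corollary is immediate once one observes that nonnegativity of the right-hand side is all that is used, and the exponent $r$ plays no role.
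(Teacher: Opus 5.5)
Your proposal is correct and is essentially the paper's own proof. You observe that nonnegativity of $u^{r-1}\varphi$ makes any nonnegative weak solution a nonnegative weak supersolution, and then you invoke Theorem~\ref{thm:whole-space-liouville}; your side remark that local integrability is automatic in the Sobolev range is also accurate.
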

 
 \begin{proof}
 	If $u\ge0$ satisfies~\eqref{eq:whole-space-power-weak}, then for every
 	nonnegative $\varphi\in C_0^\infty(\R^n)$,
 	\[
 	\int_{\R^n}\nabla u\cdot\nabla\varphi\,dx
 	-\gamma\Es(u,\varphi)
 	=\int_{\R^n}u^{r-1}\varphi\,dx\ge0.
 	\]
 	Thus $u$ is a nonnegative weak supersolution of
 	\eqref{eq:whole-space-supersolution}, and Theorem~\ref{thm:whole-space-liouville}
 	gives $u\equiv0$.
 \end{proof}
 
 \begin{remark}\label{rem:whole-space-general-f}
 	The power structure of the right-hand side is not used in the proof. The same
 	argument applies to any equation
 	\[
 	-\Delta u-\gamma(-\Delta)^s u=f(x,u)
 	\qquad\text{in }\R^n
 	\]
 	for which a nonnegative weak solution satisfies $f(x,u)\ge0$ a.e. Hence the
 	Liouville property is a consequence of the wrong-sign operator itself, rather
 	than of the particular growth of the nonlinearity.
 \end{remark}

  \section{Proof of Theorem \ref{thm:intro-threshold}}\label{sec:Proof_1.3}
In this section we prove Theorem \ref{thm:intro-threshold}. We first establish the attainment of the constant
$S_r(\gamma)$ in the subcritical range. We then study the normalized minimizers as
$\gamma\to C_{\rm emb}^-$, identify their limits as generalized eigenfunctions associated with
$C_{\rm emb}$, and
finally derive the uniform finite $L^R$-estimates needed in the sequel. 

Recall that
  \begin{equation*}
  	Q_{\gamma}(u) := \int_{\Omega}|\nabla u|^{2} \, dx - \gamma \, [u]^{2}_{s}, \quad u \in \mathcal{X}^{1,2}(\Omega),
  \end{equation*}
  \noindent and notice that, by the definition of $C_{\rm emb}$ in \eqref{eq:Cemb},
  	\begin{equation}\label{eq:coercivity}
  		Q_\gamma(u)\ge
  		\left(1-\frac\gamma{\Cemb}\right)
  		\norm{\nabla u}_{L^2(\Omega)}^2.
  \end{equation}

  \begin{proposition}\label{prop:attainment}
  	Let $1<r\le2^*$ and $0<\gamma<\Cemb$.
  	
  	If $r<2^*$, the infimum in~\eqref{eq:Sr} is achieved for every
  	$\gamma\in(0,\Cemb)$.
  	
  	If $r=2^*$, the attainment result is the one established in~\cite[Theorem~1.1]{BDVV6}:
  	if $n\ge5$, the infimum is achieved for every $\gamma\in(0,\Cemb)$, while if
  	$n=3,4$, there exists $\gamma^*\in[0,\Cemb)$ such that it is achieved for
  	every $\gamma\in(\gamma^*,\Cemb)$.
  	
  	In either case, every normalized minimizer $u_{\gamma,r}$ satisfies
  	\begin{equation}\label{eq:EL}
  		\int_\Omega\nabla u_{\gamma,r}\cdot\nabla\varphi\,dx
  		-\gamma\Es(u_{\gamma,r},\varphi)
  		=\Sr(\gamma)\int_\Omega
  		|u_{\gamma,r}|^{r-2}u_{\gamma,r}\varphi\,dx
  	\end{equation}
  	for every $\varphi\in \mathcal{X}^{1,2}(\Omega)$.
  \end{proposition}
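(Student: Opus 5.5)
For $1<r<2^*$ and $0<\gamma<\Cemb$ we use the direct method on the constraint set $M_r:=\{u\in\mathcal X^{1,2}(\Omega):\norm{u}_{L^r(\Omega)}=1\}$, which is nonempty and on which $\Sr(\gamma)\ge0$ by \eqref{eq:coercivity}. Take a minimizing sequence $(u_k)\subset M_r$, so that $Q_\gamma(u_k)\to\Sr(\gamma)$. By \eqref{eq:coercivity}, $\norm{\nabla u_k}_{L^2(\Omega)}^2\le (1-\gamma/\Cemb)^{-1}Q_\gamma(u_k)$ is bounded. By Poincaré and \eqref{eq:Cemb-ineq}, $(u_k)$ is then bounded in $\rho$-norm. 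Up to a subsequence, $u_k\rightharpoonup u$ weakly in $\mathcal X^{1,2}(\Omega)$. By Remark~\ref{rem:embeddings} we also get $u_k\to u$ strongly in $L^r(\Omega)$, since $r<2^*$, and strongly in $H^s(\R^n)$, so that $[u_k]_s\to[u]_s$. Hence $\norm{u}_{L^r(\Omega)}=1$, and in particular $u\neq0$. Weak lower semicontinuity of $\norm{\nabla\cdot}_{L^2}^2$, together with the convergence of the nonlocal term, gives $Q_\gamma(u)\le\liminf_k Q_\gamma(u_k)=\Sr(\gamma)$. Thus $u$ is a minimizer.

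The key point is that the negative term $-\gamma[u]_s^2$ passes to the limit because of the \emph{compact} embedding into $H^s$; mere lower semicontinuity would point the wrong way. Since $Q_\gamma$ is even, $|u|$ would also work, but this is not needed here. For $r=2^*$ nothing has to be proved beyond quoting \cite[Theorem~1.1]{BDVV6}, i.e.\ Theorem~\ref{thm:Da_BLMS}.

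For the Euler--Lagrange equation \eqref{eq:EL}, fix a normalized minimizer $u=u_{\gamma,r}$ and $\varphi\in\mathcal X^{1,2}(\Omega)$. For $|t|$ small, $\norm{u+t\varphi}_{L^r}>0$, so consider
\[
g(t):=\frac{Q_\gamma(u+t\varphi)}{\norm{u+t\varphi}_{L^r(\Omega)}^2},
\]
which has a minimum at $t=0$ by the homogeneity of $Q_\gamma$. The numerator is a quadratic polynomial in $t$ with derivative $2\bigl(\int_\Omega\nabla u\cdot\nabla\varphi-\gamma\Es(u,\varphi)\bigr)$ at $t=0$. The map $t\mapsto\norm{u+t\varphi}_r^r$ is $C^1$ with derivative $r\int|u|^{r-2}u\varphi$ at $0$, which is finite by Hölder because $u,\varphi\in L^r$ for $r\le2^*$ and $r>1$. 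Therefore
\[
\frac{d}{dt}\norm{u+t\varphi}_{L^r}^2\Big|_{t=0}=2\int_\Omega|u|^{r-2}u\varphi\,dx,
\]
using $\norm{u}_r=1$. Setting $g'(0)=0$ and $Q_\gamma(u)=\Sr(\gamma)$ gives exactly \eqref{eq:EL}.

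The only mildly delicate step is the differentiability of the $L^r$-norm when $1<r<2$. It follows from the pointwise bound $\bigl||a+tb|^r-|a|^r\bigr|/|t|\le r(|a|+|b|)^{r-1}|b|$, which is integrable, and dominated convergence.
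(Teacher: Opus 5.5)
Your proposal is correct and follows the paper's proof. The ingredients are the same: the direct method with coercivity from \eqref{eq:coercivity}, compactness in $L^r(\Omega)$ and, crucially, in $H^s(\R^n)$ so that the negative term $-\gamma[u]_s^2$ passes to the limit, and a citation of \cite[Theorem~1.1]{BDVV6} for $r=2^*$. The only difference is that you derive \eqref{eq:EL} by differentiating the $0$-homogeneous quotient $Q_\gamma(u+t\varphi)/\norm{u+t\varphi}_{L^r(\Omega)}^2$ at $t=0$, which yields the multiplier $\Sr(\gamma)$ directly; the paper instead invokes the Lagrange multiplier rule and identifies the multiplier by testing with $u_{\gamma,r}$.
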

  
  \begin{proof}
  	In view of Theorem \ref{thm:Da_BLMS}, only the case $r<2^*$ needs to be discussed. Let $(u_j)$ be a minimizing
  	sequence normalized by $\norm{u_j}_r=1$. By~\eqref{eq:coercivity}, $(u_j)$ is
  	bounded in $\mathcal{X}^{1,2}(\Omega)$. Hence, up to a subsequence,
  	\[
  	u_j\rightharpoonup u\quad\text{in }\mathcal{X}^{1,2}(\Omega),
  	\qquad
  	u_j\to u\quad\text{in }L^r(\Omega)\cap H^s(\R^n).
  	\]
  	Therefore $\norm{u}_r=1$, and weak lower semicontinuity of the Dirichlet
  	integral, together with the strong convergence in $H^s(\R^n)$, gives
  	\[
  	Q_\gamma(u)\le\liminf_{j\to\infty}Q_\gamma(u_j)=\Sr(\gamma).
  	\]
  	Thus, setting $u_{\gamma,r}:=u$, we obtain a normalized minimizer of
  	\eqref{eq:Sr}. The Euler equation follows from the Lagrange multiplier rule.
  	Testing it with $u_{\gamma,r}$ and using
  	$\norm{u_{\gamma,r}}_r=1$ shows that the multiplier is precisely
  	$\Sr(\gamma)$. This closes the proof.
  \end{proof}

  \medskip
  
  For the rest of the paper, whenever $r=2^*$ we restrict $\gamma$ to the range
  where the minimum in~\eqref{eq:Sr} is achieved, according to
  Proposition~\ref{prop:attainment}. This is no restriction when studying the
  limit 
  $$\gamma\to\Cemb^-.$$
  Let $\Phi_r$ be any nonzero element of the eigenspace associated with
  $\Cemb$, normalized so that
  \[
  \norm{\Phi_r}_{L^r(\Omega)}=1.
  \]
  In particular,
  \[
  \norm{\nabla\Phi_r}_{L^2(\Omega)}^2
  =\Cemb[\Phi_r]_s^2.
  \]
  \begin{proposition}\label{prop:basic-bounds}
  	For every $1<r\le2^*$ one has
  	\begin{equation}\label{eq:Sr-zero}
  		\Sr(\gamma)\longrightarrow0
  		\qquad\text{as }\gamma\to\Cemb^-.
  	\end{equation}
  	More precisely,
  	\begin{equation}\label{eq:Sr-upper}
  		0<\Sr(\gamma)\le(\Cemb-\gamma)[\Phi_r]_s^2.
  	\end{equation}
  	Moreover, every normalized minimizer $u_{\gamma,r}$ satisfies
  	\begin{equation}\label{eq:uniform-Hs}
  		[u_{\gamma,r}]_s^2\le[\Phi_r]_s^2
  	\end{equation}
  	and
  	\begin{equation*}
  		\norm{\nabla u_{\gamma,r}}_{L^2(\Omega)}^2
  		\le\Cemb[\Phi_r]_s^2.
  	\end{equation*}
  \end{proposition}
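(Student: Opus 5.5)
The plan is to use $\Phi_r$ as a competitor in the definition \eqref{eq:Sr} of $\Sr(\gamma)$, and then to combine the minimality of $u_{\gamma,r}$ with the defining inequality \eqref{eq:Cemb} of $\Cemb$.

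\emph{Upper bound.} Since $\norm{\Phi_r}_{L^r(\Omega)}=1$, the function $\Phi_r$ is admissible in \eqref{eq:Sr}. Using $\norm{\nabla\Phi_r}_{L^2(\Omega)}^2=\Cemb[\Phi_r]_s^2$ we get
\[
\Sr(\gamma)\le Q_\gamma(\Phi_r)=(\Cemb-\gamma)[\Phi_r]_s^2,
\]
which is the right-hand inequality in \eqref{eq:Sr-upper}.

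\emph{Positivity.} In the range under consideration the infimum is attained at some $u_{\gamma,r}$ with $\norm{u_{\gamma,r}}_r=1$. By \eqref{eq:coercivity},
\[
\Sr(\gamma)=Q_\gamma(u_{\gamma,r})\ge\Bigl(1-\frac{\gamma}{\Cemb}\Bigr)\norm{\nabla u_{\gamma,r}}_{L^2(\Omega)}^2 .
\]
The right-hand side is strictly positive, because $u_{\gamma,r}\ne0$ and the Poincar\'e inequality holds on $\mathcal{X}^{1,2}(\Omega)$. Letting $\gamma\to\Cemb^-$ in \eqref{eq:Sr-upper}, where $[\Phi_r]_s$ is a fixed finite number independent of $\gamma$, yields \eqref{eq:Sr-zero}.

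\emph{Uniform bounds.} We combine $Q_\gamma(u_{\gamma,r})\le(\Cemb-\gamma)[\Phi_r]_s^2$ with $\norm{\nabla u_{\gamma,r}}_{L^2(\Omega)}^2\ge\Cemb[u_{\gamma,r}]_s^2$, which is \eqref{eq:Cemb} applied by homogeneity. This gives
\[
(\Cemb-\gamma)[u_{\gamma,r}]_s^2\le \norm{\nabla u_{\gamma,r}}_2^2-\gamma[u_{\gamma,r}]_s^2\le(\Cemb-\gamma)[\Phi_r]_s^2 .
\]
Dividing by $\Cemb-\gamma>0$ gives \eqref{eq:uniform-Hs}. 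For the gradient bound we write
\[
\norm{\nabla u_{\gamma,r}}_2^2=Q_\gamma(u_{\gamma,r})+\gamma[u_{\gamma,r}]_s^2\le(\Cemb-\gamma)[\Phi_r]_s^2+\gamma[\Phi_r]_s^2=\Cemb[\Phi_r]_s^2 .
\]

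The argument has no real obstacle. The only point needing care is that the lower bound $\Sr(\gamma)>0$ is a statement about the attained minimum. If one wanted it without attainment, it would follow from \eqref{eq:coercivity} together with the Sobolev/Poincar\'e bound $\norm{\nabla u}_2\ge c\norm{u}_r$, since $r\le 2^*$ and $\Omega$ is bounded.
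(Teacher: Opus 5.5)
Your proposal is correct and follows the paper's proof essentially step by step. The competitor $\Phi_r$ gives the upper bound and the limit. The inequality $\norm{\nabla u}_{L^2(\Omega)}^2\ge\Cemb[u]_s^2$ gives \eqref{eq:uniform-Hs}; the paper phrases this as $Q_{\Cemb}\ge0$ in the splitting $Q_\gamma=Q_{\Cemb}+(\Cemb-\gamma)[\cdot]_s^2$. The gradient bound is the same one-line computation. The only cosmetic difference is that the paper gets $\Sr(\gamma)>0$ directly from \eqref{eq:coercivity} and the Sobolev--Poincar\'e inequality. That is the attainment-free route you mention at the end, rather than the argument through the minimizer.
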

  
  \begin{proof}
  	Strict positivity of $\Sr(\gamma)$ follows from~\eqref{eq:coercivity} and the
  	Sobolev--Poincar\'e inequality. Since
  	$\norm{\nabla\Phi_r}_2^2=\Cemb[\Phi_r]_s^2$, the definition of
  	$\Sr(\gamma)$ gives
  	\[
  	\Sr(\gamma)\le Q_\gamma(\Phi_r)
  	=(\Cemb-\gamma)[\Phi_r]_s^2,
  	\]
  	which proves~\eqref{eq:Sr-upper} and~\eqref{eq:Sr-zero}.
  	
  	Set
  	\[
  	Q_{\Cemb}(u):=\norm{\nabla u}_2^2-\Cemb[u]_s^2\ge0.
  	\]
  	Since
  	\[
  	Q_\gamma(u)=Q_{\Cemb}(u)+(\Cemb-\gamma)[u]_s^2,
  	\]
  	we obtain
  	\[
  	(\Cemb-\gamma)[u_{\gamma,r}]_s^2
  	\le Q_\gamma(u_{\gamma,r})=\Sr(\gamma)
  	\le(\Cemb-\gamma)[\Phi_r]_s^2,
  	\]
  	which gives~\eqref{eq:uniform-Hs}. Finally,
  	\[
  	\begin{split}
  		\norm{\nabla u_{\gamma,r}}_2^2
  		&=\Sr(\gamma)+\gamma[u_{\gamma,r}]_s^2\\
  		&\le(\Cemb-\gamma)[\Phi_r]_s^2+\gamma[\Phi_r]_s^2
  		=\Cemb[\Phi_r]_s^2.
  	\end{split}
  	\]
  	This closes the proof.
  \end{proof}
  
  \begin{theorem}\label{thm:H1-convergence}
  	Let $(\gamma_k)_k\subset(0,\Cemb)$ satisfy $\gamma_k\to\Cemb^-$, and let
  	$u_{\gamma_k,r}$ be normalized minimizers. Then there exists
  	$0\not\equiv u_\infty\in \mathcal{X}^{1,2}(\Omega)$ such that, up to a subsequence,
  	\begin{equation*}
  		u_{\gamma_k,r}\to u_\infty\qquad\text{strongly in }\mathcal{X}^{1,2}(\Omega),
  	\end{equation*}
  	and
  	\begin{equation*}
  		\norm{u_\infty}_{L^r(\Omega)}=1,
  		\qquad
  		\norm{\nabla u_\infty}_2^2=\Cemb[u_\infty]_s^2.
  	\end{equation*}
  	In particular, $u_\infty$ is an $L^r$-normalized generalized eigenfunction
associated with $C_{\rm emb}$. As a consequence, if $C_{\rm emb}$ is simple and $\Phi_s$
is any fixed $L^r$-normalized generalized eigenfunction associated with it,
then necessarily
  	\begin{equation}\label{eq:plus-minus}
  		u_\infty=\Phi_s\qquad\text{or}\qquad u_\infty=-\Phi_s.
  	\end{equation}
  \end{theorem}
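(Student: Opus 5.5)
The plan is a compactness argument built on the uniform bounds of Proposition~\ref{prop:basic-bounds}, followed by passing to the limit in the constraint and in the quadratic form. By Proposition~\ref{prop:basic-bounds}, $\norm{\nabla u_{\gamma_k,r}}_2^2\le\Cemb[\Phi_r]_s^2$. Hence $(u_{\gamma_k,r})_k$ is bounded in $\mathcal{X}^{1,2}(\Omega)$; by \eqref{eq:def_X12}, $\rho$ is equivalent to the Dirichlet norm there. Up to a subsequence, $u_{\gamma_k,r}\rightharpoonup u_\infty$ weakly in $\mathcal{X}^{1,2}(\Omega)$. By Remark~\ref{rem:embeddings}, we also get strong convergence in $H^s(\R^n)$. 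If $r<2^*$, compactness gives strong convergence in $L^r(\Omega)$, so $\norm{u_\infty}_r=1$ and in particular $u_\infty\not\equiv0$.

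Next I would identify the limit. Writing
\[
\norm{\nabla u_{\gamma_k,r}}_2^2=\Sr(\gamma_k)+\gamma_k[u_{\gamma_k,r}]_s^2,
\]
I use that $\Sr(\gamma_k)\to0$ by \eqref{eq:Sr-zero} and that $[u_{\gamma_k,r}]_s\to[u_\infty]_s$. This gives $\norm{\nabla u_{\gamma_k,r}}_2^2\to\Cemb[u_\infty]_s^2$. Weak lower semicontinuity then yields $\norm{\nabla u_\infty}_2^2\le\Cemb[u_\infty]_s^2$. The reverse inequality is the definition \eqref{eq:Cemb}, so equality holds. Therefore the norms converge, $\norm{\nabla u_{\gamma_k,r}}_2\to\norm{\nabla u_\infty}_2$. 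Combined with weak convergence in the Hilbert space, this gives strong convergence of the gradients in $L^2$. Together with the $H^s$ convergence, this is strong convergence in $\mathcal{X}^{1,2}(\Omega)$.

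The main obstacle is the critical case $r=2^*$, where $L^{2^*}$ compactness fails a priori. The fix is to obtain strong $\mathcal{X}^{1,2}$ convergence first, without using the constraint. The argument above already does this: it used only the $H^s$ compactness and the energy identity, and required only $u_\infty\ne0$ to be meaningful. Nontriviality I would secure as follows. Since $\norm{u_{\gamma_k,r}}_{r}=1$, the Sobolev inequality \eqref{eq:Sobolev} (or Sobolev--Poincaré for $r<2^*$) gives a uniform lower bound $\norm{\nabla u_{\gamma_k,r}}_2^2\ge c>0$. Hence $\Cemb[u_\infty]_s^2=\lim\norm{\nabla u_{\gamma_k,r}}_2^2\ge c$, so $u_\infty\not\equiv0$. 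Once strong $\mathcal{X}^{1,2}$ convergence holds, the continuous embedding into $L^{2^*}$ gives $\norm{u_\infty}_{2^*}=1$.

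Finally, the identity $\norm{\nabla u_\infty}_2^2=\Cemb[u_\infty]_s^2$ says that $u_\infty$ attains the infimum in \eqref{eq:Cemb} after normalization. By the discussion in Section~\ref{sec:Prel}, $u_\infty$ therefore lies in the eigenspace associated with $\Cemb$, i.e.\ it satisfies \eqref{eq:first-eigenproblem}. If $\Cemb$ is simple, then $u_\infty=t\Phi_s$ for some $t\in\R$. The normalization $\norm{u_\infty}_r=\norm{\Phi_s}_r=1$ forces $|t|=1$, which gives \eqref{eq:plus-minus}.
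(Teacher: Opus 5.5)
Your proposal is correct and follows essentially the same route as the paper: boundedness from Proposition~\ref{prop:basic-bounds}, compactness into $H^s(\R^n)$, the energy identity with $\Sr(\gamma_k)\to0$ to get $\norm{\nabla u_{\gamma_k,r}}_2^2\to\Cemb[u_\infty]_s^2$, weak lower semicontinuity together with the definition of $\Cemb$ for equality, and then norm convergence to upgrade to strong $\mathcal{X}^{1,2}(\Omega)$ convergence, which also handles $r=2^*$. The paper phrases the middle step as $Q_{\Cemb}(u_{\gamma_k,r})\to0$, and your separate Sobolev-based nontriviality step is harmless but redundant, since $\norm{u_\infty}_r=1$ already follows from the strong convergence.
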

  
  \begin{proof}
  	By Proposition~\ref{prop:basic-bounds}, $(u_{\gamma_k,r})_k$ is uniformly bounded in
  	$\mathcal{X}^{1,2}(\Omega)$. Hence, up to a subsequence,
  	\[
  	u_{\gamma_k,r}\rightharpoonup u_\infty\quad\text{in }\mathcal{X}^{1,2}(\Omega),
  	\qquad
  	u_{\gamma_k,r}\to u_\infty\quad\text{in }H^s(\R^n).
  	\]
  	In particular,
  	\begin{equation}\label{eq:seminorm-conv}
  		[u_{\gamma_k,r}]_s^2\to[u_\infty]_s^2.
  	\end{equation}
  	Moreover,
  	\begin{equation}\label{eq:QC-zero}
  		\begin{split}
  			Q_{\Cemb}(u_{\gamma_k,r})
  			&=Q_{\gamma_k}(u_{\gamma_k,r})-(\Cemb-\gamma_k)[u_{\gamma_k,r}]_s^2\\
  			&=\Sr(\gamma_k)-(\Cemb-\gamma_k)[u_{\gamma_k,r}]_s^2\longrightarrow0,
  		\end{split}
  	\end{equation}
  	where we used~\eqref{eq:Sr-zero} and~\eqref{eq:uniform-Hs}. By weak lower
  	semicontinuity and~\eqref{eq:seminorm-conv},
  	\[
  	Q_{\Cemb}(u_\infty)
  	\le\liminf_{k\to\infty}Q_{\Cemb}(u_{\gamma_k,r})=0.
  	\]
  	Since $Q_{\Cemb}\ge0$, it follows that
  	\begin{equation}\label{eq:limit-QC-zero}
  		Q_{\Cemb}(u_\infty)=0.
  	\end{equation}
  	Combining~\eqref{eq:QC-zero}, \eqref{eq:seminorm-conv} and
  	\eqref{eq:limit-QC-zero}, we obtain
  	\[
  	\norm{\nabla u_{\gamma_k,r}}_2^2
  	=Q_{\Cemb}(u_{\gamma_k,r})+\Cemb[u_{\gamma_k,r}]_s^2
  	\longrightarrow\Cemb[u_\infty]_s^2
  	=\norm{\nabla u_\infty}_2^2.
  	\]
  	Weak convergence together with convergence of the norms gives strong
  	convergence in $H^1_0(\Omega)$ and hence, together with
  	\eqref{eq:seminorm-conv}, in $\mathcal{X}^{1,2}(\Omega)$. In particular,
  	$$u_{\gamma_k,r}\to u_\infty$$ 
  	strongly in $L^r(\Omega)$, including when $r=2^*$.
  	Therefore $\|u_\infty\|_r=1$, and \eqref{eq:limit-QC-zero} identifies $u_\infty$ as a generalized
eigenfunction associated with $C_{\rm emb}$. 
The last assertion follows
under the additional assumption that $C_{\rm emb}$ is simple.
 
 This closes the proof.
  \end{proof}
  
  We now prove estimates which are uniform as $\gamma\to C_{\rm emb}^-$.
Our use of truncated-power test functions is inspired by the
Moser-type arguments for mixed local--nonlocal equations in
\cite{SVWZ, SVWZ2}; here, the main difficulty is the unfavorable sign
of the fractional term in \eqref{eq:EL}.
\vspace{0.1cm}

  For $\beta>1$ and $T\ge1$, define
  \begin{equation}\label{eq:GTHT}
  	G_{\beta,T}(t):=t\min\{|t|,T\}^{\beta-1},
  	\qquad
  	H_{\beta,T}(t):=t\min\{|t|,T\}^{2\beta-2}.
  \end{equation}
  
  \begin{lemma}\label{lem:algebra}
  	For every $\beta>1$ and $T\ge1$, the functions in~\eqref{eq:GTHT} satisfy
  	\begin{equation}\label{eq:derivative-ineq}
  		H_{\beta,T}'(t)\ge c_\beta|G_{\beta,T}'(t)|^2
  		\quad\text{for a.e. }t\in\R,
  		\qquad
  		c_\beta:=\frac{2\beta-1}{\beta^2},
  	\end{equation}
  	\begin{equation}\label{eq:nonlocal-algebra}
  		(a-b)\bigl(H_{\beta,T}(a)-H_{\beta,T}(b)\bigr)
  		\le2|G_{\beta,T}(a)-G_{\beta,T}(b)|^2
  		\qquad\text{for every }a,b\in\R,
  	\end{equation}
  	and
  	\begin{equation}\label{eq:uH=G2}
  		tH_{\beta,T}(t)=G_{\beta,T}(t)^2
  		\qquad\text{for every }t\in\R.
  	\end{equation}
  \end{lemma}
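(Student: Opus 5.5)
The three assertions are elementary; we organize the proof around the factorization
\[
G_{\beta,T}(t)=t\,g(t),\qquad H_{\beta,T}(t)=t\,g(t)^2,\qquad g(t):=\min\{|t|,T\}^{\beta-1},
\]
where $g\ge0$ is even and nondecreasing in $|t|$ (because $\beta>1$). Identity \eqref{eq:uH=G2} is then immediate: $tH_{\beta,T}(t)=t^2g(t)^2=G_{\beta,T}(t)^2$.

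For \eqref{eq:derivative-ineq} we compute the derivatives away from the null set $\{0,\pm T\}$, splitting into two regimes.
\begin{itemize}
\item If $|t|<T$, then $G_{\beta,T}(t)=t|t|^{\beta-1}$ and $H_{\beta,T}(t)=t|t|^{2\beta-2}$. Hence $G_{\beta,T}'(t)=\beta|t|^{\beta-1}$ and $H_{\beta,T}'(t)=(2\beta-1)|t|^{2\beta-2}$, so that
\[
H_{\beta,T}'(t)=\frac{2\beta-1}{\beta^2}\,|G_{\beta,T}'(t)|^2 ,
\]
with equality.
\item If $|t|>T$, then $G_{\beta,T}'(t)=T^{\beta-1}$ and $H_{\beta,T}'(t)=T^{2\beta-2}=|G_{\beta,T}'(t)|^2$. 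Since $\beta^2-(2\beta-1)=(\beta-1)^2\ge0$, we have $c_\beta\le1$, and the inequality follows.
\end{itemize}

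The only step requiring a small idea is \eqref{eq:nonlocal-algebra}. One cannot simply integrate \eqref{eq:derivative-ineq}, since Cauchy--Schwarz goes the wrong way. We argue algebraically instead. Both sides are symmetric in $(a,b)$, and $G_{\beta,T},H_{\beta,T}$ are odd, so the inequality is invariant under $(a,b)\mapsto(-a,-b)$. We then split into two cases according to the signs of $a$ and $b$.

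\emph{Case $ab>0$.} We may assume $a\ge b>0$, so that $g(a)\ge g(b)$. Set $X:=G_{\beta,T}(a)-G_{\beta,T}(b)=ag(a)-bg(b)$. Then
\[
X\ge(a-b)g(a)\ge0 .
\]
Expanding the product yields the identity
\[
\bigl(ag(a)-bg(b)\bigr)\bigl(g(a)+g(b)\bigr)
=H_{\beta,T}(a)-H_{\beta,T}(b)+(a-b)g(a)g(b).
\]
This gives $0\le H_{\beta,T}(a)-H_{\beta,T}(b)\le X\bigl(g(a)+g(b)\bigr)\le 2Xg(a)$. Therefore
\[
(a-b)\bigl(H_{\beta,T}(a)-H_{\beta,T}(b)\bigr)\le 2X\,(a-b)g(a)\le 2X^2 .
\]

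\emph{Case $ab\le0$.} We may assume $|a|\ge|b|$, so that $g(a)\ge g(b)$. Here $ag(a)$ and $-bg(b)$ have the same sign, and writing everything in absolute values,
\[
\text{LHS}=(|a|+|b|)\bigl(|a|g(a)^2+|b|g(b)^2\bigr),\qquad
\text{RHS}=2\bigl(|a|g(a)+|b|g(b)\bigr)^2 .
\]
Expanding the left-hand side gives
\[
\text{LHS}=a^2g(a)^2+|ab|g(a)^2+|ab|g(b)^2+b^2g(b)^2 .
\]
Using $|ab|g(a)^2\le a^2g(a)^2$ and $|ab|g(b)^2\le 4|ab|g(a)g(b)$, the left-hand side is bounded by $2a^2g(a)^2+4|ab|g(a)g(b)+2b^2g(b)^2\le\text{RHS}$.

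The two cases together prove \eqref{eq:nonlocal-algebra}.
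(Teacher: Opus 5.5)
Your proof is correct. The derivative computation and \eqref{eq:uH=G2} coincide with the paper's. For \eqref{eq:nonlocal-algebra}, however, the paper uses a single identity instead of two separate case computations. Writing $G=tg$ and $H=tg^2$ (the paper calls your $g$ by the name $h$), one has
\[
(a-b)\bigl(H(a)-H(b)\bigr)=\bigl(G(a)-G(b)\bigr)^2-ab\,\bigl(g(a)-g(b)\bigr)^2 .
\]
For $ab\ge0$ the inequality follows at once from this identity, even with constant $1$. For $ab<0$ and $|a|\ge|b|$, the correction term satisfies $|ab|(g(a)-g(b))^2\le a^2g(a)^2=G(a)^2\le|G(a)-G(b)|^2$, because $G(a)$ and $G(b)$ have opposite signs. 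Your same-sign argument, through $X$ and the identity $(ag(a)-bg(b))(g(a)+g(b))=H(a)-H(b)+(a-b)g(a)g(b)$, is valid but longer, and it only gives the constant $2$. Your opposite-sign expansion is essentially the paper's estimate written out term by term. What the paper's identity buys is transparency: it isolates the exact defect $-ab\,(g(a)-g(b))^2$ relative to the constant-$1$ inequality. This shows the factor $2$ is needed only when $a$ and $b$ have opposite signs. Your version is self-contained and costs nothing beyond length.
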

  
  \begin{proof}
  	Set $h(t):=\min\{|t|,T\}^{\beta-1}$, so that $G(t)=th(t)$ and
  	$H(t)=th(t)^2$. If $|t|<T$, then
  	\[
  	H'(t)=(2\beta-1)|t|^{2\beta-2},
  	\qquad
  	|G'(t)|^2=\beta^2|t|^{2\beta-2},
  	\]
  	while, if $|t|>T$,
  	\[
  	H'(t)=T^{2\beta-2},
  	\qquad
  	|G'(t)|^2=T^{2\beta-2}.
  	\]
  	Since $c_\beta\le1$, this proves~\eqref{eq:derivative-ineq};
  	\eqref{eq:uH=G2} is immediate.
  	
  	For~\eqref{eq:nonlocal-algebra}, a direct computation gives
  	\begin{equation}\label{eq:algebra-identity}
  		(a-b)(H(a)-H(b))
  		=(G(a)-G(b))^2-ab\,(h(a)-h(b))^2.
  	\end{equation}
  	If $ab\ge0$, the conclusion follows immediately. If $ab<0$, assume without
  	loss of generality that $|a|\ge|b|$. Since $h$ is nondecreasing as a function
  	of $|t|$,
  	\[
  	|ab|\,(h(a)-h(b))^2
  	\le |a||b|h(a)^2
  	\le |a|^2h(a)^2
  	\le |G(a)-G(b)|^2,
  	\]
  	because $G(a)$ and $G(b)$ have opposite signs. Formula~\eqref{eq:algebra-identity}
  	then yields~\eqref{eq:nonlocal-algebra}.
  \end{proof}

  \begin{proposition}\label{prop:finite-LR}
  	Let $R\in[1,+\infty)$. There exist $\gamma_R<\Cemb$ and $M_R>0$ such that
  	\begin{equation}\label{eq:uniform-LR}
  		\norm{u_{\gamma,r}}_{L^R(\Omega)}\le M_R
  	\end{equation}
  	for every $\gamma\in(\gamma_R,\Cemb)$ and every normalized minimizer
  	$u_{\gamma,r}$.
  \end{proposition}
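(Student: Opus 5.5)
We will run a Moser iteration on the Euler--Lagrange equation \eqref{eq:EL}. The truncated power test functions of Lemma~\ref{lem:algebra} handle the nonlocal term despite its wrong sign, and Remark~\ref{rem:lower-order-X} absorbs it into the Dirichlet term. Write $u=u_{\gamma,r}$ and $G=G_{\beta,T}(u)$, $H=H_{\beta,T}(u)$. Since $G_{\beta,T}$ and $H_{\beta,T}$ are Lipschitz and vanish at $0$, both $G$ and $H$ belong to $\mathcal{X}^{1,2}(\Omega)$, and $H$ is an admissible test function in \eqref{eq:EL}.

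\emph{Basic inequality.} Testing \eqref{eq:EL} with $\varphi=H$ and using \eqref{eq:derivative-ineq} on the local term gives $\int_\Omega\nabla u\cdot\nabla H=\int_\Omega H'(u)|\nabla u|^2\ge c_\beta\|\nabla G\|_2^2$. By \eqref{eq:nonlocal-algebra}, $\gamma\Es(u,H)\le 2\gamma[G]_s^2\le 2\Cemb[G]_s^2$. On the right-hand side, \eqref{eq:uH=G2} gives $|u|^{r-2}uH=|u|^{r-2}G^2$, and Proposition~\ref{prop:basic-bounds} gives $\Sr(\gamma)\le \Cemb[\Phi_r]_s^2=:K$. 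Together these yield
\[
c_\beta\|\nabla G\|_2^2\le 2\Cemb[G]_s^2+K\int_\Omega|u|^{r-2}G^2\,dx .
\]
Apply Remark~\ref{rem:lower-order-X} with $\varepsilon=c_\beta/(4\Cemb)$ to get $2\Cemb[G]_s^2\le \tfrac{c_\beta}{2}\|\nabla G\|_2^2+C_\beta\|G\|_2^2$. This leaves
\[
\|\nabla G\|_2^2\le C_\beta\Big(\|G\|_2^2+\int_\Omega|u|^{r-2}G^2\Big),
\]
with constants independent of $\gamma$. Note that, unlike the standard mixed local--nonlocal setting, the nonlocal term is not dropped as nonnegative: it is absorbed at the price of an $L^2$ term.

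\emph{Iteration.} The potential term is handled by splitting $\{|u|\le M\}$ from $\{|u|>M\}$. On the second set, H\"older and Sobolev \eqref{eq:Sobolev} give
\[
\int_{\{|u|>M\}}|u|^{r-2}G^2\le \| |u|^{r-2}\chi_{\{|u|>M\}}\|_{n/2}\|G\|_{2^*}^2 .
\]
If $r<2^*$, then $(r-2)n/2<2^*$, and the uniform bound on $\|u\|_{2^*}$ from Proposition~\ref{prop:basic-bounds} makes this norm small uniformly in $\gamma$ once $M$ is large.

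The case $r=2^*$ is the main obstacle, since then $\||u|^{2^*-2}\chi_{\{|u|>M\}}\|_{n/2}$ is controlled only by $\|u\|_{L^{2^*}(\{|u|>M\})}$, which a bare bound does not make small. Here we use Theorem~\ref{thm:H1-convergence}: normalized minimizers are precompact in $L^{2^*}$ as $\gamma\to\Cemb^-$. Arguing by contradiction along a sequence $\gamma_k\to\Cemb^-$ shows the family is $2^*$-equi-integrable for $\gamma$ close to $\Cemb$, so the smallness holds uniformly. This is exactly why the statement only provides some $\gamma_R<\Cemb$ rather than the full interval.

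With the smallness in hand, the potential term is absorbed into the left-hand side, leaving
\[
\|G\|_{2^*}^2\le C(\beta,M)\|G\|_2^2\le C\|u\|_{2\beta}^{2\beta}.
\]
Letting $T\to\infty$ gives $\|u\|_{2^*\beta}\le C\|u\|_{2\beta}$. Starting from $2\beta=2^*$ and iterating finitely many times with $\beta_{j+1}=\beta_j\,2^*/2$ reaches any prescribed $R$. Each step requires shrinking to a smaller $\gamma$-interval and enlarging the constant, which is harmless since only finitely many steps are needed.
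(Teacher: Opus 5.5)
Your argument is correct, but it takes a different route from the paper's. The paper never treats $|u|^{r-2}$ as a potential in $L^{n/2}$. For $2\le r\le 2^*$ it bounds the right-hand side by H\"older and $\norm{u_{\gamma,r}}_r=1$, obtaining $\Sr(\gamma)\norm{G_{\gamma,T}}_r^2\le C\,\Sr(\gamma)\norm{G_{\gamma,T}}_{2^*}^2$. It then absorbs this into the Dirichlet term using $\Sr(\gamma)\to0$. That smallness of the multiplier, which you discard when you bound $\Sr(\gamma)\le\Cemb[\Phi_r]_s^2$, is what produces $\gamma_R$ in the paper. For $1<r<2$ the paper simply uses $|u|^{r-2}G^2\le 1+G^2$.

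From $\norm{G_{\gamma,T}}_{2^*}^2\le K_\beta(1+\norm{G_{\gamma,T}}_2^2)$, the paper removes the $L^2$ term in one step. It splits at a level $M$ with $|E_M|\le M^{-r}$ (Chebyshev), so $\norm{G}_2^2\le|\Omega|M^{2\beta}+M^{-2r/n}\norm{G}_{2^*}^2$. Each $L^{\beta 2^*}$ bound is thus obtained directly for arbitrary $\beta$, with no iteration and no compactness.

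Your Brezis--Kato scheme instead needs uniform smallness of the tail of $|u|^{r-2}$ in $L^{n/2}$. For $r<2^*$ this comes for free from the uniform $L^{2^*}$ bound, and your estimate then holds on the whole interval $(0,\Cemb)$, which is slightly stronger. For $r=2^*$, however, it forces you to invoke Theorem~\ref{thm:H1-convergence} and a contradiction argument. Your threshold is therefore non-constructive, whereas the paper's is quantitative via \eqref{eq:Sr-upper}. For the same reason, the restriction to $\gamma$ near $\Cemb$ is not "exactly" due to equi-integrability, as you suggest; in the paper it comes from needing $\Sr(\gamma)$ small.

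One small slip: for $1<r<2$ the weight $|u|^{r-2}$ is unbounded on $\{|u|\le M\}$. The bound there should read $\norm{G}_{2^*}^2\le C(\beta,M)\bigl(1+\norm{G}_2^2\bigr)$, using $|u|^{r-2}G^2\le|u|^{r-2+2\beta}\le M^{r-2+2\beta}$ (since $|G|\le|u|^\beta$). This additive constant is harmless for your iteration.
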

  
  \begin{proof}
  	Fix $\beta>1$, to be chosen below, and, for $T\ge1$, set
  	\[
  	G_{\gamma,T}:=G_{\beta,T}(u_{\gamma,r}),\qquad
  	H_{\gamma,T}:=H_{\beta,T}(u_{\gamma,r}).
  	\]
  	Since $H_{\beta,T}$ is Lipschitz and vanishes at the origin,
  	$H_{\gamma,T}\in \mathcal{X}^{1,2}(\Omega)$ and can be used as a test function in~\eqref{eq:EL}.
  	By~\eqref{eq:derivative-ineq},
  	\begin{equation}\label{eq:local-test}
  		\int_\Omega\nabla u_{\gamma,r}\cdot\nabla H_{\gamma,T}\,dx
  		\ge c_\beta\norm{\nabla G_{\gamma,T}}_2^2.
  	\end{equation}
  	On the other hand,~\eqref{eq:nonlocal-algebra} gives
  	\begin{equation}\label{eq:nonlocal-test}
  		\Es(u_{\gamma,r},H_{\gamma,T})\le2[G_{\gamma,T}]_s^2.
  	\end{equation}
  	By Remark~\ref{rem:lower-order-X}, choosing
  	$\varepsilon=c_\beta/(4\Cemb)$ and using $\gamma<\Cemb$, we infer from
  	\eqref{eq:local-test}--\eqref{eq:nonlocal-test} that
  	\begin{equation}\label{eq:test-lower}
  		\int_\Omega\nabla u_{\gamma,r}\cdot\nabla H_{\gamma,T}\,dx-\gamma\Es(u_{\gamma,r},H_{\gamma,T})
  		\ge\frac{c_\beta}{2}\norm{\nabla G_{\gamma,T}}_2^2-C_\beta\norm{G_{\gamma,T}}_2^2,
  	\end{equation}
  	where $C_\beta>0$ is independent of $T$, $\gamma$ and the chosen minimizer $u_{\gamma,r}$.
  	Moreover, by~\eqref{eq:uH=G2}, the right-hand side of~\eqref{eq:EL} becomes
  	\begin{equation*}
  		\Sr(\gamma)\int_\Omega|u_{\gamma,r}|^{r-2}G_{\gamma,T}^2\,dx.
  	\end{equation*}
  	
  	We distinguish two cases. If $2\le r\le2^*$, H\"older's inequality and
  	$\norm{u_{\gamma,r}}_r=1$ give
  	\[
  	\int_\Omega|u_{\gamma,r}|^{r-2}G_{\gamma,T}^2\,dx\le\norm{G_{\gamma,T}}_r^2
  	\le C\norm{G_{\gamma,T}}_{2^*}^2.
  	\]
  	Using~\eqref{eq:Sobolev} in~\eqref{eq:test-lower} and recalling that
  	$\Sr(\gamma)\to0$, we can absorb the last term for $\gamma$ sufficiently
  	close to $\Cemb$. Hence
  	\begin{equation}\label{eq:key-G}
  		\norm{G_{\gamma,T}}_{2^*}^2\le K_\beta\bigl(1+\norm{G_{\gamma,T}}_2^2\bigr),
  	\end{equation}
  	with $K_\beta$ independent of $T$, $\gamma$ and the minimizer $u_{\gamma,r}$.
  	
  	If $1<r<2$, then pointwise
  	\begin{equation*}
  		|u_{\gamma,r}|^{r-2}G_{\gamma,T}^2\le1+G_{\gamma,T}^2.
  	\end{equation*}
  	Indeed, on $\{|u_{\gamma,r}|\le1\}$ the left-hand side is at most $1$, whereas on
  	$\{|u_{\gamma,r}|>1\}$ we have $|u_{\gamma,r}|^r\le|u_{\gamma,r}|^2$. Since $\Sr(\gamma)$ is bounded for
  	$\gamma$ close to $\Cemb$,~\eqref{eq:key-G} follows again from
  	\eqref{eq:test-lower} and~\eqref{eq:Sobolev}.
  	
  	We now remove the $L^2$-term in~\eqref{eq:key-G} without any iteration. Let
  	$P:=2^*$ and, for $M\ge1$, set
  	\[
  	E_M:=\{x\in\Omega:|u_{\gamma,r}(x)|>M\}.
  	\]
  	Since $\norm{u_{\gamma,r}}_r=1$, Chebyshev's inequality gives
  	\begin{equation*}
  		|E_M|\le M^{-r}.
  	\end{equation*}
  	On $\Omega\setminus E_M$ one has $|G_{\gamma,T}|\le|u_{\gamma,r}|^\beta\le M^\beta$, while on
  	$E_M$, H\"older's inequality gives
  	\[
  	\int_{E_M}|G_{\gamma,T}|^2\,dx
  	\le |E_M|^{1-2/P}\norm{G_{\gamma,T}}_P^2
  	\le M^{-2r/n}\norm{G_{\gamma,T}}_P^2.
  	\]
  	Consequently,
  	\begin{equation}\label{eq:G-L2-level}
  		\norm{G_{\gamma,T}}_2^2\le|\Omega|M^{2\beta}
  		+M^{-2r/n}\norm{G_{\gamma,T}}_P^2.
  	\end{equation}
  	Inserting~\eqref{eq:G-L2-level} into~\eqref{eq:key-G}, and choosing $M$ so
  	large that $K_\beta M^{-2r/n}\le1/2$, we obtain
  	\begin{equation*}
  		\norm{G_{\gamma,T}}_P^2\le A_\beta,
  	\end{equation*}
  	where $A_\beta$ is independent of $T$, $\gamma$ and the minimizer $u_{\gamma,r}$.
  	Letting $T\to+\infty$ and using monotone convergence,
  	\begin{equation}\label{eq:beta-P}
  		\norm{u_{\gamma,r}}_{L^{\beta P}(\Omega)}^{2\beta}\le A_\beta.
  	\end{equation}
  	Given $R<+\infty$, choose a fixed $\beta>1$ such that $R\le\beta P$.
  	Since $\Omega$ is bounded,~\eqref{eq:uniform-LR} follows from
  	\eqref{eq:beta-P}.
  \end{proof}
  
  \begin{corollary}\label{cor:rhs}
  	For every fixed $q\in[1,+\infty)$,
  	\begin{equation*}
  		\Sr(\gamma)|u_{\gamma,r}|^{r-2}u_{\gamma,r}
  		\longrightarrow0
  		\qquad\text{strongly in }L^q(\Omega)
  	\end{equation*}
  	as $\gamma\to\Cemb^-$, uniformly with respect to the normalized minimizers $u_{\gamma,r}$.
  \end{corollary}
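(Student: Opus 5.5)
The claim reduces to the uniform $L^R$ bounds of Proposition~\ref{prop:finite-LR} combined with the decay $\Sr(\gamma)\to0$ from Proposition~\ref{prop:basic-bounds}. Set $f_\gamma:=\Sr(\gamma)|u_{\gamma,r}|^{r-2}u_{\gamma,r}$. Pointwise we have $|f_\gamma|=\Sr(\gamma)|u_{\gamma,r}|^{r-1}$. It therefore suffices to show that
\[
\Sr(\gamma)\,\norm{u_{\gamma,r}}_{L^{q(r-1)}(\Omega)}^{r-1}\longrightarrow0
\]
uniformly over normalized minimizers.

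Fix $q\in[1,+\infty)$ and set $R:=\max\{1,q(r-1)\}$, which lies in $[1,+\infty)$ because $r>1$. Proposition~\ref{prop:finite-LR} provides $\gamma_R<\Cemb$ and $M_R>0$ such that $\norm{u_{\gamma,r}}_{L^R(\Omega)}\le M_R$ for every $\gamma\in(\gamma_R,\Cemb)$ and every normalized minimizer. If $q(r-1)<1$, we pass from $L^R=L^1$ down to $L^{q(r-1)}$ using the boundedness of $\Omega$ and H\"older's inequality. This costs only a factor $|\Omega|^{\theta}$ for a suitable exponent $\theta$ depending on $q$ and $r$. Alternatively, we can avoid quasi-norms by estimating
\[
\int_\Omega|u|^{q(r-1)}\,dx\le|\Omega|+\int_\Omega|u|\,dx .
\]
In either case we obtain a bound $\norm{f_\gamma}_{L^q(\Omega)}\le \Sr(\gamma)\,C(q,r,\Omega,M_R)$, and the constant does not depend on $\gamma$ or on the minimizer.

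Finally, \eqref{eq:Sr-upper} gives $\Sr(\gamma)\le(\Cemb-\gamma)[\Phi_r]_s^2$, where $\Phi_r$ is one fixed normalized eigenfunction, so the bound involves no minimizer. Hence $\norm{f_\gamma}_{L^q}\le C(\Cemb-\gamma)\to0$, uniformly in the minimizers. There is no real obstacle here. The only point to handle carefully is the case $q(r-1)<1$, which arises when $r<2$ and $q$ is small, where the integrability exponent drops below $1$; boundedness of $\Omega$ takes care of it as described.
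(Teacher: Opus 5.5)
Your proposal is correct and follows the paper's own proof: the same choice $R=\max\{1,q(r-1)\}$, the uniform $L^R$ bound from Proposition~\ref{prop:finite-LR}, H\"older's inequality on the bounded domain $\Omega$ (giving the factor $|\Omega|^{1/q-(r-1)/R}$), and finally $\Sr(\gamma)\to0$. Quoting the explicit bound $\Sr(\gamma)\le(\Cemb-\gamma)[\Phi_r]_s^2$ is harmless but not needed, since $\Sr(\gamma)$ does not depend on the choice of minimizer.
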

  
  \begin{proof}
  	Set $R:=\max\{1,q(r-1)\}$. Proposition~\ref{prop:finite-LR} and H\"older's
  	inequality give
  	\[
  	\norm{\Sr(\gamma)|u_{\gamma,r}|^{r-2}u_{\gamma,r}}_q
  	\le \Sr(\gamma)|\Omega|^{1/q-(r-1)/R}\norm{u_{\gamma,r}}_R^{r-1}
  	\le C_R\Sr(\gamma),
  	\]
  	and the conclusion follows from~\eqref{eq:Sr-zero}.
  \end{proof}
  
  \begin{proof}[Proof of Theorem \ref{thm:intro-threshold}]
  	This is now a simple combination of  Proposition \ref{prop:basic-bounds}, Theorem \ref{thm:H1-convergence} and Proposition \ref{prop:finite-LR}. This closes the proof.
  \end{proof}

  \section{Proof of Theorem \ref{thm:intro-C1}}\label{sec:Proof_1.4}
 In this section we prove Theorem 1.4. Throughout the section, we assume that
\[
0<s<\frac12
\]
and that $\Omega$ is of class $C^{1,1}$.
  
  We shall use directly the global regularity estimate in~\cite[Theorem~1.8]{RE8}.
  There is only one normalization issue to keep track of. In~\cite{RE8}, the
  fractional Laplacian contains the standard positive factor $c_{n,s}$, whereas
  our definition associated with~\eqref{eq:seminorm} corresponds to
  $c_{n,s}=1$. Hence our equation
  \[
  -\Delta v-\gamma(-\Delta)^s v=g
  \]
  corresponds in the notation of~\cite{RE8} to the signed measure
  \begin{equation*}
  	\mu_\gamma=-\frac{\gamma}{c_{n,s}}\delta_s.
  \end{equation*}
  Since $s$ is fixed and $0\le\gamma\le\Cemb$, the total variations of these
  measures are uniformly bounded. Therefore, for every fixed
  $q\in[2,+\infty)$,~\cite[Theorem~1.8, formula~(1.26)]{RE8} yields
  \begin{equation}\label{eq:DPLSV-estimate}
  	\norm{v}_{W^{2,q}(\Omega)}
  	\le C\bigl(\norm{g}_{L^q(\Omega)}+\norm{v}_{L^2(\Omega)}\bigr),
  \end{equation}
  where $C$ is independent of $\gamma\in[0,\Cemb]$.
  
  \begin{proposition}\label{prop:uniform-W2q}
  	For every fixed $q\in[2,+\infty)$ there exist $\gamma_q<\Cemb$ and $C_q>0$
  	such that
  	\begin{equation}\label{eq:uniform-W2q}
  		\norm{u_{\gamma,r}}_{W^{2,q}(\Omega)}\le C_q
  	\end{equation}
  	for every $\gamma\in(\gamma_q,\Cemb)$ and every normalized minimizer $u_{\gamma,r}$.
  	In particular, if $q>n$ and $\alpha=1-n/q$, then
  	\begin{equation*}
  		\norm{u_{\gamma,r}}_{C^{1,\alpha}(\overline\Omega)}\le C_q.
  	\end{equation*}
  \end{proposition}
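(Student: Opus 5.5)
The plan is to apply the global estimate \eqref{eq:DPLSV-estimate} directly to the minimizer. By \eqref{eq:EL}, $v=u_{\gamma,r}$ is a weak solution in $\mathcal{X}^{1,2}(\Omega)$ of
\[
-\Delta v-\gamma(-\Delta)^s v=g_\gamma:=\Sr(\gamma)|u_{\gamma,r}|^{r-2}u_{\gamma,r}\quad\text{in }\Omega,\qquad v=0\ \text{in }\R^n\setminus\Omega.
\]
For $\gamma\in(0,\Cemb)$ this is exactly the setting of \cite[Theorem~1.8]{RE8}, with the signed measure $\mu_\gamma=-\frac{\gamma}{c_{n,s}}\delta_s$ and $0<s<1/2$. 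I will first check that the notion of weak solution in \cite{RE8} agrees with \eqref{eq:EL}; the normalization is already handled above. I will also check that the hypotheses of that theorem (an $L^q$ datum, $\mathcal{X}^{1,2}$ regularity of the solution, a $C^{1,1}$ boundary) are met. Then \eqref{eq:DPLSV-estimate} gives
\[
\norm{u_{\gamma,r}}_{W^{2,q}(\Omega)}\le C\bigl(\norm{g_\gamma}_{L^q(\Omega)}+\norm{u_{\gamma,r}}_{L^2(\Omega)}\bigr),
\]
where $C$ depends only on $q$, $n$, $s$ and $\Omega$, and not on $\gamma$.

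It remains to bound the right-hand side uniformly in $\gamma$ and in the choice of minimizer.

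\emph{The $L^2$ term.} Proposition~\ref{prop:basic-bounds} gives $\norm{\nabla u_{\gamma,r}}_2^2\le\Cemb[\Phi_r]_s^2$. The Poincar\'e inequality on $\Omega$ then bounds $\norm{u_{\gamma,r}}_{L^2(\Omega)}$ by a constant independent of $\gamma$. Alternatively, one can use Proposition~\ref{prop:finite-LR} with $R=2$.

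\emph{The datum.} Corollary~\ref{cor:rhs} gives $\norm{g_\gamma}_{L^q(\Omega)}\to0$ as $\gamma\to\Cemb^-$, uniformly in the minimizers. In particular, there is $\gamma_q<\Cemb$ such that $\norm{g_\gamma}_q\le1$ for $\gamma\in(\gamma_q,\Cemb)$. We take $\gamma_q$ to be at least the threshold $\gamma_R$ from Proposition~\ref{prop:finite-LR} with $R=\max\{1,q(r-1)\}$. This yields \eqref{eq:uniform-W2q}.

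The $C^{1,\alpha}$ bound for $q>n$ then follows from Morrey's embedding $W^{2,q}(\Omega)\hookrightarrow C^{1,1-n/q}(\overline\Omega)$, which is valid on bounded $C^{1,1}$ (indeed Lipschitz) domains; this only changes the constant $C_q$.

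The main obstacle I expect is justifying that \cite[Theorem~1.8]{RE8} applies to a merely weak solution with constants uniform in $\gamma$. That estimate is presumably stated for solutions already known to be in $W^{2,q}$, or proved through an a priori bound plus an approximation argument. If it is an a priori estimate, I would first establish $u_{\gamma,r}\in W^{2,q}(\Omega)$ for each fixed $\gamma$, by the following bootstrap. Since $s<1/2$ and $\partial\Omega\in C^{1,1}$, the term $(-\Delta)^s u_{\gamma,r}$ is a lower-order perturbation: it maps $W^{2,p}\cap\mathcal{X}^{1,2}(\Omega)$ into $L^p(\Omega)$, because $2s<1$ and the function vanishes outside $\Omega$. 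Classical Calder\'on--Zygmund theory for $-\Delta$ then upgrades the regularity step by step, starting from $H^1_0$ and using Proposition~\ref{prop:finite-LR} for the nonlinearity. After that, \eqref{eq:DPLSV-estimate} supplies the uniform constant. The uniformity in $\gamma$ itself is guaranteed by the bounded total variation of $\mu_\gamma$, as already remarked in the text preceding the statement.
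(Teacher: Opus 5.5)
Your proposal is correct and follows the same route as the paper. You apply the $\gamma$-uniform estimate \eqref{eq:DPLSV-estimate} to \eqref{eq:EL}, bound $\|g_\gamma\|_{L^q(\Omega)}$ by Corollary~\ref{cor:rhs} and $\|u_{\gamma,r}\|_{L^2(\Omega)}$ by Proposition~\ref{prop:basic-bounds} with Poincar\'e, and get the $C^{1,\alpha}$ bound from Morrey's embedding; your additional bootstrap checking that \cite[Theorem~1.8]{RE8} applies to weak solutions is only an extra safeguard, since the paper applies that estimate directly.
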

  
  \begin{proof}
  	Apply~\eqref{eq:DPLSV-estimate} to~\eqref{eq:EL}, with
  	\[
  	g=\Sr(\gamma)|u_{\gamma,r}|^{r-2}u_{\gamma,r}.
  	\]
  	The $L^q$-norm of $g$ is uniformly bounded, and actually tends to zero, by
  	Corollary~\ref{cor:rhs}, while the $L^2$-norm of $u_{\gamma,r}$ is uniformly
  	bounded by Proposition~\ref{prop:basic-bounds}. This proves
  	\eqref{eq:uniform-W2q}. The second assertion follows from the
  	Sobolev--Morrey embedding.
  \end{proof}
  
  \begin{proposition}\label{prop:strong-W2q}
  	Let $\gamma_k\to C_{\rm emb}^{-}$ and let $u_{\gamma_k,r}$ be normalized
minimizers. After passing to a subsequence and relabelling, let
$u_\infty$ be the limit supplied by Theorem~\ref{thm:H1-convergence}, so that
\[
 u_{\gamma_k,r}\longrightarrow u_\infty
 \qquad\text{strongly in }\mathcal{X}^{1,2}(\Omega).
\]
Then, for every fixed $q\in[2,+\infty)$,  	\begin{equation}\label{eq:strong-W2q}
  		u_{\gamma_k,r}\to u_\infty\qquad\text{strongly in }W^{2,q}(\Omega).
  	\end{equation}
  	In particular, if $q>n$,
  	\begin{equation}\label{eq:strong-C1}
  		u_{\gamma_k,r}\to u_\infty\qquad\text{strongly in }C^1(\overline\Omega).
  	\end{equation}
  	If $C_{\rm emb}$ is simple, then there exists $\sigma\in\{-1,1\}$ such that  	\begin{equation*}
  		u_{\gamma_k,r}\to\sigma\Phi_s
  		\qquad\text{strongly in }W^{2,q}(\Omega)\text{ and }C^1(\overline\Omega),
  	\end{equation*}
  	where $\Phi_s$ is any fixed
$L^r$-normalized generalized eigenfunction associated with $C_{\rm emb}$.
  \end{proposition}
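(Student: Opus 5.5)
The plan is to deduce the $W^{2,q}$ convergence from the uniform estimate \eqref{eq:DPLSV-estimate}, applied to differences of the equations. The strong $\mathcal{X}^{1,2}$ convergence from Theorem~\ref{thm:H1-convergence} supplies the $L^2$ control, and Corollary~\ref{cor:rhs} shows that the right-hand side vanishes.

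First we identify the equation satisfied by $u_\infty$. By Theorem~\ref{thm:H1-convergence}, $Q_{\Cemb}(u_\infty)=0$, so $u_\infty$ minimizes the quotient in \eqref{eq:Cemb}. It therefore satisfies \eqref{eq:first-eigenproblem}, that is, $-\Delta u_\infty-\Cemb(-\Delta)^s u_\infty=0$ weakly. Alternatively, one can pass to the limit in \eqref{eq:EL}, using strong convergence in $\mathcal{X}^{1,2}(\Omega)$ and Corollary~\ref{cor:rhs}. Next we set $w_k:=u_{\gamma_k,r}-u_\infty\in\mathcal{X}^{1,2}(\Omega)$ and rewrite its equation with the fixed operator $-\Delta-\gamma_k(-\Delta)^s$ on the left:
\[
-\Delta w_k-\gamma_k(-\Delta)^s w_k = g_k + (\gamma_k-\Cemb)(-\Delta)^s u_\infty,\qquad g_k:=\Sr(\gamma_k)|u_{\gamma_k,r}|^{r-2}u_{\gamma_k,r}.
\]
Here the last term is understood weakly, as $(\gamma_k-\Cemb)\Es(u_\infty,\varphi)$. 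Applying \eqref{eq:DPLSV-estimate}, whose constant is uniform in $\gamma\in[0,\Cemb]$, we get
\[
\norm{w_k}_{W^{2,q}(\Omega)}\le C\bigl(\norm{g_k}_q+|\Cemb-\gamma_k|\,\norm{(-\Delta)^s u_\infty}_{L^q(\Omega)}+\norm{w_k}_2\bigr).
\]
The first term tends to zero by Corollary~\ref{cor:rhs}. The last tends to zero by the $\mathcal{X}^{1,2}$ convergence and Poincaré's inequality.

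The main obstacle is to justify that the middle term is finite, namely that $(-\Delta)^s u_\infty$, restricted to $\Omega$, belongs to $L^q(\Omega)$, and that the regularity theorem of \cite{RE8} accepts it as a right-hand side. Here $s<1/2$ is used. By Proposition~\ref{prop:uniform-W2q} and weak compactness, $u_\infty\in W^{2,q}(\Omega)$ for every $q$, so $u_\infty\in C^{1,\alpha}(\overline\Omega)$, vanishes outside $\Omega$, and is globally Lipschitz on $\R^n$. For $s<1/2$, the singular integral defining $(-\Delta)^s u_\infty$ converges absolutely for Lipschitz, compactly supported functions, and gives a bounded function. Hence it lies in $L^\infty\subset L^q(\Omega)$, and its weak form coincides with $\Es(u_\infty,\cdot)$ on test functions.

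If one wants to avoid this computation, a fallback is a compactness argument. The sequence is bounded in $W^{2,q'}$ for every $q'>q$ by Proposition~\ref{prop:uniform-W2q}, and $W^{2,q'}(\Omega)\hookrightarrow W^{2,q}(\Omega)$ is not compact. So compactness alone only yields convergence in $W^{1,q}$ and in $C^{1}(\overline\Omega)$, through Arzelà--Ascoli applied to the uniform $C^{1,\alpha}$ bounds. This fallback already gives \eqref{eq:strong-C1} directly, with the limit identified as $u_\infty$ by the $\mathcal{X}^{1,2}$ convergence. For the full $W^{2,q}$ statement the difference argument above is needed.

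Finally, if $\Cemb$ is simple, Theorem~\ref{thm:H1-convergence} gives $u_\infty=\sigma\Phi_s$ with $\sigma\in\{-1,1\}$, which concludes the proof.
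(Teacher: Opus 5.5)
Your proposal is correct and follows the paper's argument: subtract the limit equation, apply the uniform estimate \eqref{eq:DPLSV-estimate} to $w_k=u_{\gamma_k,r}-u_\infty$, and conclude with Corollary~\ref{cor:rhs} and the strong $\mathcal{X}^{1,2}$ convergence. The only difference is how you show $(-\Delta)^s u_\infty\in L^q(\Omega)$: the paper first gets $u_\infty\in W^{2,q}(\Omega)$ by applying \eqref{eq:DPLSV-estimate} at $\gamma=\Cemb$, then reads the bound off the limit equation, $(-\Delta)^s u_\infty=-\Delta u_\infty/\Cemb$; your direct $L^\infty$ bound, via the Lipschitz property and $s<1/2$, is also valid.
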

  
  \begin{proof}
  	The limit $u_\infty$ satisfies
  	\[
  	-\Delta u_\infty-\Cemb(-\Delta)^s u_\infty=0.
  	\]
  	Applying~\eqref{eq:DPLSV-estimate} at $\gamma=\Cemb$ gives
  	$u_\infty\in W^{2,q}(\Omega)$. Hence, by the above equation,
  	\begin{equation}\label{eq:frac-limit-Lq}
  		(-\Delta)^s u_\infty
  		=\frac{-\Delta u_\infty}{\Cemb}\in L^q(\Omega).
  	\end{equation}
  	Set
  	\[
  	f_{\gamma_k}:=\Sr(\gamma_k)|u_{\gamma_k,r}|^{r-2}u_{\gamma_k,r},
  	\qquad
  	w_k:=u_{\gamma_k,r}-u_\infty.
  	\]
  	Then
  	\begin{equation*}
  		-\Delta w_k-\gamma_k(-\Delta)^s w_k
  		=f_{\gamma_k}-(\Cemb-\gamma_k)(-\Delta)^s u_\infty.
  	\end{equation*}
  	Using~\eqref{eq:DPLSV-estimate} once more, together with
  	Corollary~\ref{cor:rhs},~\eqref{eq:frac-limit-Lq}, and the strong
  	$\mathcal{X}^{1,2}$-convergence in Theorem~\ref{thm:H1-convergence}, we obtain
  	\[
  	\begin{split}
  		\norm{u_{\gamma_k,r}-u_\infty}_{W^{2,q}(\Omega)}
  		\le C\bigl(&\norm{f_{\gamma_k}}_q
  		+(\Cemb-\gamma_k)\norm{(-\Delta)^s u_\infty}_q\\
  		&+\norm{u_{\gamma_k,r}-u_\infty}_2\bigr)\longrightarrow0.
  	\end{split}
  	\]
  	This proves~\eqref{eq:strong-W2q};~\eqref{eq:strong-C1} follows for $q>n$.
  	The last assertion follows from~\eqref{eq:plus-minus}.
  \end{proof}
  
  \begin{proof}[Proof of Theorem \ref{thm:intro-C1}]
  	Combining Proposition \ref{prop:uniform-W2q} and Proposition \ref{prop:strong-W2q} we get the desired conclusion. This closes the proof.
  \end{proof}
  
  \section{Proof of Theorems \ref{thm:intro-positivity} and \ref{thm:intro-supercritical}}\label{sec:Proof_1.5}
  In this last section, we complete the analysis of the bounded-domain problem. We first prove
Theorem \ref{thm:intro-positivity} by combining the convergence obtained in Sections~
\ref{sec:Proof_1.3} and \ref{sec:Proof_1.4} with the
small-$s$ spectral information of Lemma~\ref{lem:DPLSV-small-s} and the Hopf lemma. We then turn
to the Sobolev supercritical regime and prove Theorem~
\ref{thm:intro-supercritical} by means of a
Pohozaev identity.
\medskip

 We begin with the positivity of the normalized minimizers.
 For this part of the section, we assume that $\Omega$ is connected and
of class $C^{1,1}$, and fix $1<r\le2^*$.

 Let $\varphi_1$
denote the positive first Dirichlet eigenfunction of $-\Delta$, normalized in such a way that
$\|\varphi_1\|_{L^r(\Omega)}=1$. By the Hopf lemma,
  \begin{equation*}
  	\varphi_1>0\quad\text{in }\Omega,
  	\qquad
  	\partial_{\nu_{\rm in}}\varphi_1>0\quad\text{on }\partial\Omega,
  \end{equation*}
  where $\nu_{\rm in}$ denotes the interior unit normal.
  
  \begin{lemma}\label{lem:C1-stability}
  	Let $\phi\in C^1(\overline\Omega)$ satisfy
  	\[
  	\phi=0\quad\text{on }\partial\Omega,
  	\qquad
  	\phi>0\quad\text{in }\Omega,
  	\qquad
  	\inf_{\partial\Omega}\partial_{\nu_{\rm in}}\phi>0.
  	\]
  	Then there exists $\eta>0$ such that every
  	$v\in C^1(\overline\Omega)$ satisfying $v=0$ on $\partial\Omega$ and
  	\[
  	\norm{v-\phi}_{C^1(\overline\Omega)}<\eta
  	\]
  	is strictly positive in $\Omega$.
  \end{lemma}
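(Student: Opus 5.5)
We split $\Omega$ into a thin collar along the boundary and a compact interior set. On the collar we use the positive inward normal derivative; on the interior set we use a uniform lower bound for $\phi$. Since $\Omega$ is bounded and of class $C^{1,1}$ (in particular $C^1$), $\partial\Omega$ is compact and the interior normal $\nu_{\rm in}$ is continuous on $\partial\Omega$. Set $m:=\inf_{\partial\Omega}\partial_{\nu_{\rm in}}\phi>0$.

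\emph{Collar step.} Since $\nabla\phi$ is uniformly continuous on $\overline\Omega$ and $\nu_{\rm in}$ is continuous on the compact set $\partial\Omega$, we choose $\delta>0$ with the following properties. For every $x_0\in\partial\Omega$ and $t\in(0,\delta]$, the point $x_0+t\nu_{\rm in}(x_0)$ lies in $\Omega$, and so does the whole segment joining it to $x_0$. Moreover, $\nabla\phi(y)\cdot\nu_{\rm in}(x_0)\ge 3m/4$ for every $y\in\overline\Omega$ with $|y-x_0|\le\delta$. We also require that every $x\in\Omega$ with $\operatorname{dist}(x,\partial\Omega)<\delta$ can be written as $x=x_0+t\nu_{\rm in}(x_0)$ with $x_0\in\partial\Omega$ its nearest point and $t=\operatorname{dist}(x,\partial\Omega)$. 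This last property is where the $C^{1,1}$ regularity is really used: it guarantees a uniform interior ball condition, so nearest points exist and lie along the normal. For merely $C^1$ domains one would instead use a uniform cone/transversal direction $\nu(x_0)$, with the same argument.

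Now let $\|v-\phi\|_{C^1}<\eta\le m/4$. Then $\nabla v(y)\cdot\nu_{\rm in}(x_0)\ge m/2$ along the segment. Since $v(x_0)=0$, the fundamental theorem of calculus gives $v(x)\ge tm/2>0$ for every point $x=x_0+t\nu_{\rm in}(x_0)$ in the collar.

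\emph{Interior step.} The set $K:=\{x\in\Omega:\operatorname{dist}(x,\partial\Omega)\ge\delta\}$ is compact and contained in $\Omega$. Since $\phi$ is continuous and positive on $K$, we have $\kappa:=\min_K\phi>0$ (if $K$ is empty there is nothing to prove). Choosing $\eta:=\min\{m/4,\kappa/2\}$ gives $v\ge\phi-\eta\ge\kappa/2>0$ on $K$.

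Combining the two steps, $v>0$ in $\Omega$. The main obstacle is the uniformity of the collar step: obtaining a single $\delta$ that works for all boundary points. This follows from the compactness of $\partial\Omega$ together with the tubular-neighbourhood property of $C^{1,1}$ domains. Everything else is elementary.
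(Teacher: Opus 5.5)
Your proof is correct and follows the same route as the paper. Both arguments bound the inward normal derivative from below in a thin boundary layer, integrate along inward normal segments from the nearest boundary point, and use the positive minimum of $\phi$ on the compact interior set $\{\operatorname{dist}(\cdot,\partial\Omega)\ge\delta\}$, with $\eta$ the smaller of the two resulting thresholds. Freezing the direction $\nu_{\rm in}(x_0)$ on a ball around each boundary point is a clean way to make the paper's ``by continuity'' step precise. Note also that the representation $x=x_0+t\,\nu_{\rm in}(x_0)$ with $x_0$ a nearest point already holds for any $C^1$ boundary, by first-order optimality together with $B(x,t)\subset\Omega$, so $C^{1,1}$ is not actually needed at that step.
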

  
  \begin{proof}
  	Set
  	\[
  	a:=\min_{\partial\Omega}\partial_{\nu_{\rm in}}\phi>0.
  	\]
  	By continuity, there exists $\delta>0$ such that the derivative of $\phi$ in
  	the inward normal direction is at least $3a/4$ in the boundary layer
  	\[
  	\Omega_\delta:=\{x\in\Omega:\operatorname{dist}(x,\partial\Omega)<\delta\}.
  	\]
  	If $\norm{v-\phi}_{C^1}<a/4$, the corresponding derivative of $v$ is at
  	least $a/2$ in a possibly smaller boundary layer. Since $v=0$ on
  	$\partial\Omega$, integration along inward normal segments gives
  	\[
  	v(x)\ge\frac a2\operatorname{dist}(x,\partial\Omega)>0
  	\qquad\text{for }x\in\Omega_\delta.
  	\]
  	On the compact set
  	$K_\delta:=\{x\in\Omega:\operatorname{dist}(x,\partial\Omega)\ge\delta\}$,
  	we have $m_\delta:=\min_{K_\delta}\phi>0$. Requiring in addition
  	$\norm{v-\phi}_{C^0}<m_\delta/2$ yields $v>0$ also on $K_\delta$.
  \end{proof}
  Thanks to the above lemma, we can now provide the
    \begin{proof}[Proof of Theorem \ref{thm:intro-positivity}]
  	Let $\eta>0$ be given by Lemma~\ref{lem:C1-stability} with
  	$\phi=\varphi_1$. By Lemma~\ref{lem:DPLSV-small-s}, reducing $s_0$ if necessary,
  	we can find $s_1\in(0,s_0)$ such that
  	\begin{equation*}
  		\norm{\Phi_s-\varphi_1}_{C^1(\overline\Omega)}<\frac\eta2
  		\qquad\text{for every }s\in(0,s_1),
  	\end{equation*}
  	where $\Phi_s$ is the positive $L^r$-normalized generalized eigenfunction
associated with $C_{\rm emb}$.  	
  	We now fix $s\in(0,s_1)$, and  we argue by contradiction. If the assertion were false, there would exist a sequence
$\gamma_k\to C_{\rm emb}^{-}$ and normalized minimizers $u_{\gamma_k,r}$
that are neither strictly positive nor strictly negative in $\Omega$.  	

By Theorem~4.3 and the simplicity of $C_{\rm emb}$, after passing
to a subsequence there exists $\sigma\in\{-1,1\}$ such that  	\[
  	u_{\gamma_k,r}\to\sigma\Phi_s\qquad\text{strongly in }\mathcal{X}^{1,2}(\Omega).
  	\]
  	Proposition~\ref{prop:strong-W2q}, with any $q>n$, improves this convergence to
  	\begin{equation*}
  		u_{\gamma_k,r}\to\sigma\Phi_s
  		\qquad\text{strongly in }C^1(\overline\Omega).
  	\end{equation*}
  	Therefore, for $k$ sufficiently large,
  	\[
  	\norm{\sigma u_{\gamma_k,r}-\varphi_1}_{C^1(\overline\Omega)}
  	\le\norm{\sigma u_{\gamma_k,r}-\Phi_s}_{C^1(\overline\Omega)}
  	+\norm{\Phi_s-\varphi_1}_{C^1(\overline\Omega)}<\eta.
  	\]
  	Lemma~\ref{lem:C1-stability} gives $\sigma u_{\gamma_k,r}>0$ in $\Omega$, contradicting
our choice of $u_{\gamma_k,r}$. This proves the existence of
  	$\overline\gamma_{r,s}$ and hence the theorem.
  \end{proof}
  
%  \begin{corollary}\label{cor:critical}
%  	Assume the hypotheses of Theorem~\ref{thm:positivity},
%  	\textcolor{red}{and let
%$1<r\le2^*$, $r\neq2$}. For
%  	all $s>0$ sufficiently small and $\gamma<\Cemb(s)$ sufficiently close to
%  	$\Cemb(s)$, the problem
%  	\begin{equation}\label{eq:critical-problem}
%  		\begin{cases}
%  			-\Delta v - \gamma(-\Delta)^s v = |v|^{r-2}v&\text{in }\Omega,\\
%  			v=0&\text{in }\R^n\setminus\Omega,
%  		\end{cases}
%  	\end{equation}
%  	admits a positive weak solution.
%  \end{corollary}
%   \begin{proof}
%  	%By~\cite[Theorem~1.1]{BDVV6}, 
%  	We know that for $\gamma$ sufficiently close to $\Cemb(s)$
%  	the minimum $S_{r}(\gamma)$ is attained. Let $u_{\gamma,r}$ be the
%  	positive normalized minimizer given by Theorem~\ref{thm:positivity}. Then
%  	\[
%  	v_\gamma:=S_{r}(\gamma)^{1/(r-2)}u_{\gamma,r}
%  	\]
%  	is a positive weak solution of~\eqref{eq:critical-problem}. This closes the proof.
%  \end{proof}
We now turn to establish Theorem \ref{thm:intro-supercritical}. The proof is based on a Pohozaev-type identity adapted to the wrong-sign
operator. We first recall 
the identity in the form needed below. 
  \begin{proposition}\label{prop:Poho_Limitati}
  Let $\Omega \subset \mathbb{R}^n$ be an open, bounded and connected set with $C^2$-smooth boundary $\partial \Omega$. Let us denote by $\nu$ the outer unit normal to $\partial \Omega$. Let 
  $$f:\mathbb{R}\to \mathbb{R}$$ be a continuous function.
  Let $s \in (0,1/2)$.
  Let $u \in C^{2}(\overline{\Omega})\cap C(\mathbb{R}^n)$  be a solution to
  \begin{equation}\label{eq:Per_Poho_Limitati}
  	\begin{cases}
  		-\Delta u - \gamma(-\Delta)^s u =f(u)&\text{in }\Omega,\\
  		u=0&\text{in }\R^n\setminus\Omega.
  	\end{cases}
  \end{equation}
   Then the following identity holds true
  \begin{equation}\label{eq:Poho_Limitati}
  		\dfrac{n-2}{2} \int_{\Omega}|\nabla u|^2 - \gamma \, \dfrac{n-2s}{2} \, [u]^2_s - n \int_{\Omega}F(u) =-\dfrac{1}{2}\int_{\de \Omega}(x\cdot \nu) \, \left(\dfrac{\partial u}{\partial \nu}\right)^2\,d\sigma,
  \end{equation}
  \noindent where $F(t):=\int_{0}^{t}f(\tau)\, d\tau$.
  \end{proposition}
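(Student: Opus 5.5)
The identity splits into a local part and a nonlocal part, and I will obtain each from a known Pohozaev identity. I multiply the equation by $x\cdot\nabla u$ and integrate over $\Omega$; since $u\in C^2(\overline\Omega)$, the integrand is continuous up to the boundary. The local term is handled by the classical Rellich--Pohozaev computation, which uses $u=0$ on $\partial\Omega$, hence $\nabla u=(\partial_\nu u)\nu$ there:
\[
\int_\Omega(-\Delta u)(x\cdot\nabla u)\,dx=-\frac{n-2}{2}\int_\Omega|\nabla u|^2\,dx-\frac12\int_{\partial\Omega}(x\cdot\nu)\Bigl(\frac{\partial u}{\partial\nu}\Bigr)^2d\sigma.
\]
The right-hand side is also standard: $\int_\Omega f(u)\,x\cdot\nabla u=\int_\Omega x\cdot\nabla F(u)=-n\int_\Omega F(u)$, because $F(u)=0$ on $\partial\Omega$.

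For the nonlocal term I would invoke the fractional Pohozaev identity of Ros-Oton--Serra, in the normalization associated with \eqref{eq:Es}. It reads
\[
\int_\Omega(-\Delta)^su\,(x\cdot\nabla u)\,dx=\frac{2s-n}{2}\int_\Omega u(-\Delta)^su\,dx-\frac{\Gamma(1+s)^2}{2}\int_{\partial\Omega}\Bigl(\frac{u}{\delta^s}\Bigr)^2(x\cdot\nu)\,d\sigma,
\]
up to the normalizing constant, where $\delta$ is the distance to $\partial\Omega$. Here the assumption $s<1/2$ enters. Since $u\in C^1(\overline\Omega)$ and $u=0$ on $\partial\Omega$, we have $|u|\le C\delta$, so $u/\delta^s\le C\delta^{1-s}\to0$ at the boundary and the fractional boundary term vanishes. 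Using $\int u(-\Delta)^su=\frac12[u]_s^2$ (the paper's $\Es$ convention; I would track the factor so that the final coefficient matches $\frac{n-2s}{2}[u]_s^2$), the nonlocal contribution after multiplying by $-\gamma$ becomes $+\gamma\frac{n-2s}{2}\cdot(\text{energy})$. Summing the three pieces and changing sign gives \eqref{eq:Poho_Limitati}.

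\textbf{Main obstacle.} Ros-Oton--Serra prove their identity for solutions of $(-\Delta)^su=g$ with $u\in C^s(\mathbb R^n)$ and $u/\delta^s$ Hölder up to the boundary. Our $u$ instead solves a mixed equation, and $(-\Delta)^su$ need not be bounded near $\partial\Omega$, though for $u\in C^{1}$ vanishing outside it behaves like $\delta^{1-2s}$, which is integrable for $s<1/2$. I would avoid the structural hypothesis and redo their computation directly. Set $u_\lambda(x)=u(\lambda x)$, write $\int(-\Delta)^su\,(x\cdot\nabla u)=\frac{d}{d\lambda}\big|_{\lambda=1}\int u_\lambda(-\Delta)^su$, and use $\int u_\lambda(-\Delta)^su=\lambda^{(2s-n)/2}\int w_{\sqrt\lambda}(-\Delta)^sw_{1/\sqrt\lambda}$ with $w=u$. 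Differentiating under the integral requires only that $(-\Delta)^su\in L^1(\Omega)$ and that $u\in C^{0,1}(\mathbb R^n)$ (since $u\in C^1(\overline\Omega)$ and $u=0$ outside). The symmetric term then gives exactly $\frac{2s-n}{2}\int u(-\Delta)^su$, and the boundary singularity that produces the $u/\delta^s$ term is absent because $u$ is Lipschitz across $\partial\Omega$ and $2s<1$. Justifying this differentiation, via dominated convergence using the $\delta^{1-2s}$ bound, is where I expect most of the work to lie.
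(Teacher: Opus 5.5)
Your argument is correct, and it proves the identity directly, whereas the paper only cites it. The paper invokes the mixed local--nonlocal Pohozaev identity of \cite[Theorem~1.1]{ABiswas} and adds three remarks: the normalization constant is absorbed into $\gamma$; the sign of the fractional coefficient is irrelevant because the local and nonlocal contributions are computed separately; and the regularity hypotheses hold because the zero extension of $u$ is Lipschitz and $(-\Delta)^s u$ is bounded for $s<1/2$. You instead build the identity from the classical Rellich computation plus a Ros-Oton--Serra-type scaling argument for the nonlocal term. The paper's route is shorter, but the reader must trust that the cited proof is insensitive to the sign. Yours shows explicitly why no $u/\delta^s$ boundary term survives and where $s<1/2$ enters.

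Two corrections, neither affecting the strategy.

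\emph{Normalization.} With the paper's convention, $(-\Delta)^s u(x)=2\int_{\R^n}\frac{u(x)-u(z)}{|x-z|^{n+2s}}\,dz$. Hence $\int u\,(-\Delta)^s u=\Es(u,u)=[u]_s^2$, not $\frac12[u]_s^2$. This value is exactly what produces the coefficient $\frac{n-2s}{2}$; with $\frac12$ you would get $\frac{n-2s}{4}$.

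\emph{Boundedness.} $(-\Delta)^s u$ is in fact bounded on all of $\R^n$. Indeed $|2u(x)-u(x+y)-u(x-y)|\le\min\{2L|y|,4\|u\|_\infty\}$, where $L$ is the Lipschitz constant of the zero extension, and $|y|^{1-n-2s}$ is integrable near $0$ when $s<1/2$. A profile like $\delta^{1-2s}$ tends to $0$; it is not singular.

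Consequently the step you flag as the main obstacle is easy, and no analysis of $\delta^{1-2s}$ is needed:
\begin{itemize}
\item Set $J(\lambda):=\int_{\R^n}u_\lambda\,(-\Delta)^s u\,dx$. Dominated convergence, with majorant $L|x|\,\|(-\Delta)^s u\|_\infty$ on a fixed ball, gives $J'(1)=\int_\Omega (x\cdot\nabla u)(-\Delta)^s u\,dx$.
\item By your scaling formula, $I(\lambda):=\lambda^{\frac{n-2s}{2}}J(\lambda)=\Es(u_{\sqrt\lambda},u_{1/\sqrt\lambda})$. The integration by parts here is plain Fubini, because the double integral converges absolutely for Lipschitz compactly supported functions when $s<1/2$.
\item So $I$ is differentiable at $1$, and $I(\lambda)=I(1/\lambda)$ by the symmetry of $\Es$. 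Hence $I'(1)=0$.
\item This gives $\int_\Omega(x\cdot\nabla u)(-\Delta)^s u\,dx=-\frac{n-2s}{2}[u]_s^2$, with no boundary term.
\end{itemize}
You should state this symmetry $I(\lambda)=I(1/\lambda)$ explicitly, since it is the actual reason the ``symmetric term'' contributes nothing.
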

The identity is the same as the one proved 
in \cite[Theorem~1.1]{ABiswas}. We only
need to keep track of two minor points. First, our fractional Laplacian differs
from the normalization used in \cite{ABiswas} by a positive multiplicative constant,
which is absorbed into $\gamma$. Second, although
\cite{ABiswas} states the equation
with a nonnegative coefficient in front of the fractional term, the proof of
the Pohozaev identity treats the local and nonlocal contributions separately,
and the sign of this coefficient does not enter the computation once the
required regularity is available. In the present formulation this regularity is
immediate: since $u\in C^2(\overline\Omega)$ and $u=0$ in
$\mathbb R^n\setminus\Omega$, its zero extension is globally Lipschitz, while
for $s<1/2$ the fractional term is bounded up to the boundary. Therefore, after adjusting the coefficient to account for the different
normalization, the computation in \cite{ABiswas} applies with a negative fractional
coefficient and yields the identity above. 
\begin{proof}[Proof of Theorem \ref{thm:intro-supercritical}]
 Up to a translation of coordinates, we may assume that $\Omega$ is
star-shaped with respect to the origin, so that
$x\cdot\nu(x)\ge0$ for every $x\in\partial\Omega$.

 	Let us start noticing that, due to the assumption $p>\tfrac{n+2}{n-2}$, the following hold true:
 	\begin{equation}\label{eq:Coeff_positivo}
 		\dfrac{n-2}{2}-\dfrac{n}{p+1}>0,
 	\end{equation}
 	\noindent and 
 	\begin{equation}\label{eq:Coeff_positivo2}
 		\dfrac{n-2s}{2}-\dfrac{n}{p+1}>0.
 	\end{equation}
 	Assume now that there exists a non-trivial solution to \eqref{eq:Non_Esistenza_Limitati} and let us test \eqref{eq:Non_Esistenza_Limitati} with $u$ itself: this gives
 	\begin{equation}\label{eq:Testando_con_u_sopracritico}
 		\int_{\Omega}|\nabla u|^2 -\gamma \, [u]^{2}_{s} = \int_{\Omega}|u|^{p+1}.
 	\end{equation}
 	Plugging the latter into \eqref{eq:Poho_Limitati}, we get
 	\begin{equation*}
 		\begin{aligned}
 			\dfrac{n-2}{2} \int_{\Omega}|\nabla u|^2 &- \gamma \, \dfrac{n-2s}{2} \, [u]^2_s - \dfrac{n}{p+1}\int_{\Omega}|\nabla u|^2 + \dfrac{\gamma \, n}{p+1}[u]^{2}_{s} \\
 			& =-\dfrac{1}{2}\int_{\de \Omega}(x\cdot \nu) \, \left(\dfrac{\partial u}{\partial \nu}\right)^2\,d\sigma,
 		\end{aligned}
 	\end{equation*}
 	Using the assumption on $\gamma$, the fact that $[u]_s>0$ for a nontrivial
solution, and the definition of $C_{\rm emb}$ in \eqref{eq:Cemb}, we obtain 	\begin{equation}
 		\begin{aligned}
0
&<\left[
 C_{\rm emb}\left(\frac{n-2}{2}-\frac{n}{p+1}\right)
 -\gamma\left(\frac{n-2s}{2}-\frac{n}{p+1}\right)
 \right][u]_s^2\\
&\le
 \left(\frac{n-2}{2}-\frac{n}{p+1}\right)
 \int_\Omega |\nabla u|^2\,dx
 -\gamma\left(\frac{n-2s}{2}-\frac{n}{p+1}\right)[u]_s^2\\
&=-\frac12\int_{\partial\Omega}(x\cdot\nu)
 \left(\frac{\partial u}{\partial\nu}\right)^2\,d\sigma
 \le0.
\end{aligned}
 	\end{equation}
 	This contradiction proves the theorem.
 	 \end{proof}


\begin{thebibliography}{99}
 
\bibitem{AC}
N. Abatangelo, M. Cozzi,
{\em An elliptic boundary value problem with fractional nonlinearity},
SIAM J. Math. Anal. {\bf 53}(3), (2021), 3577--3601. 
%
%\bibitem{ALI20}
%	N. Alibaud, F. del Teso, J. Endal, E. R. Jakobsen, \emph{The Liouville theorem and linear operators satisfying the maximum principle},
%	J. Math. Pures Appl. (9) {\bf 142}, (2020), 229--242.


\bibitem{AntoCozzi}
C.A. Antonini, M. Cozzi,
{\em Global gradient regularity and a Hopf lemma for quasilinear operators of mixed local-nonlocal type},
J. Differential Equations {\bf 425}, (2025), 342--382. 
%\url{https://doi.org/10.1016/j.jde.2025.01.030}

\bibitem{BaDPQ}
B. Barrios, L.M. Del Pezzo, A. Quaas,
{\em Mixed local and nonlocal laplacian without standard critical exponent for Lane-Emden equation}, preprint. \url{https://arxiv.org/abs/2507.12258}

%\bibitem{RE11}
%S. Bhowmick, S. Ghosh, V. Kumar, \emph{Superlinear problems involving nonlinear superposition operators of mixed fractional order}, 
%Proc. Roy. Soc. Edinburgh Sect. A. Published online 2026: 1--26. doi:10.1017/prm.2026.10124.


\bibitem{RE12}
S. Bhowmick, S. Ghosh, V. Kumar, \emph{Infinitely many solutions for nonlinear
superposition operators of mixed fractional order involving critical exponent}, 
Discrete Contin. Dyn. Syst. Ser. S {\bf 31}, (2027), 264--287.


%\bibitem{RE13}
%S. Bhowmick, S. Ghosh, V. Kumar, R. Lakshmi, \emph{Regularity of superposition operators
%of mixed fractional order}, preprint, \url{https://arxiv.org/abs/2606.01449}

\bibitem{BDVV-CPDE22}
S. Biagi, S. Dipierro, E. Valdinoci, E. Vecchi,
{\em Mixed local and nonlocal elliptic operators: regularity and maximum principles}, 
Comm. Partial Differential Equations {\bf 47} (3) (2022), 585--629.


%\bibitem{BDVV2-2021}
%S. Biagi, S. Dipierro, E. Valdinoci, E. Vecchi,
%{\em Semilinear elliptic equations involving mixed local
%and nonlocal operators},
% Proc. Roy. Soc. Edinburgh Sect. A {\bf 151}(5), (2021), 1611--1641. 


%\bibitem{BDVV3}
%S. Biagi, S. Dipierro, E. Valdinoci, E. Vecchi,
%{\em A Faber-Krahn inequality for mixed local and nonlocal operators}, J. Anal. Math. {\bf 150}(2), (2023), 405--448.
 
%\bibitem{BDVV4}
%S. Biagi, S. Dipierro, E. Valdinoci, E. Vecchi,
%{\em A Hong-Krahn-Szeg\"{o} inequality for mixed local and nonlocal operators}, Math. Eng. {\bf 5} (1) (2023), Paper No. 014, 25 pp.
 
\bibitem{BDVV5}
S. Biagi, S. Dipierro, E. Valdinoci, E. Vecchi,
{\em A Brezis-Nirenberg type result for mixed local and nonlocal operators}, 
 NoDEA Nonlinear Differential Equations Appl. {\bf 32}, (2025), art. 62.

 \bibitem{BDVV6}
S. Biagi, S. Dipierro, E. Valdinoci and E. Vecchi,
{\em On a Sobolev critical problem for the superposition of a local and nonlocal operator with the ``wrong sign''},
Bull. Lond. Math. Soc. {\bf 58}(9), (2026), e70477. 

\bibitem{BEMV}
S. Biagi, F. Esposito, L. Montoro, E. Vecchi,
{\em On mixed local-nonlocal problems with Hardy potential},
published online (2025), Proc. Roy. Soc. Edinburgh Sect. A. 
\url{https://doi.org/10.1017/prm.2025.28}

%\bibitem{BMV}
%S. Biagi, D. Mugnai, E. Vecchi,
%{\em A Brezis-Oswald approach for mixed local and nonlocal operators},
%Commun. Contemp. Math. 2250057 (2022), 28 pp.

%\bibitem{BMV2}
%S. Biagi, D. Mugnai and E. Vecchi,
%{\em Necessary condition in a Brezis-Oswald-type problem for mixed local and nonlocal operators}, Appl. Math. Lett. {\bf 132}, (2022), 108177.
% 
\bibitem{BiPuVe}
S. Biagi, F. Punzo, E. Vecchi,
{\em Blow-up of solutions to semilinear parabolic equations driven by mixed local-nonlocal operators with large initial data}, preprint.
\url{https://arxiv.org/abs/2605.07911}

\bibitem{BiagiVecchi} 
S. Biagi, E. Vecchi,
{\em Multiplicity of positive solutions for mixed local-nonlocal singular critical problems}, 
Calc. Var. Partial Differ. Equ. {\bf 63}, (2024), Article 221.

\bibitem{BiagiVecchi2} 
S. Biagi, E. Vecchi,
{\em On the existence of a second positive solution to mixed local-nonlocal concave-convex critical problems}, 
Nonlinear Anal. {\bf 256}, (2025), 113795.
%\url{https://doi.org/10.1016/j.na.2025.113795}
%

\bibitem{ABiswas}
A. Biswas,
{\em The Pohozaev identity for mixed local-nonlocal operators},
J. Math. Anal. Appl. {\bf 557}(1), (2026), 130270.

%preprint.
%\url{https://arxiv.org/abs/2510.15507}
%
%
\bibitem{BFV}
M. Bonforte, A. Figalli, J.L. V\'{a}zquez,
{\em Sharp boundary behaviour of solutions to semilinear nonlocal elliptic equations},
Calc. Var.  Partial Differ. Equ. {\bf 57}, (2018), Article 57.

%\bibitem{BCP}
%J.-M. Bony, P. Courrège, P. Priouret, 
%{\em Semi-groupes de Feller sur une variété à bord compacte et problèmes aux limites intégro-différentiels du second ordre donnant lieu au principe du maximum},
%Ann. Inst. Fourier (Grenoble) {\bf 18}(2), (1968), 369--521.


%\bibitem{Brezis}
%H. Brezis,
%\emph{Functional analysis, Sobolev spaces and partial differential equations}, 
% U\-ni\-ver\-si\-text, Springer, New York, 2011. 

%\bibitem{BrezisLieb}
%H. Brezis, E. Lieb, 
%{\em A relation between pointwise convergence of functions and convergence of integrals}, 
%Proc. Amer. Math. Soc. {\bf 88}, (1983), 486--490.

%\bibitem{BrezisKato}
%H. Brezis, T. Kato, 
%{\em Remarks on the Schr\"{o}dinger operator with singular complex potentials}, 
%J. Math. Pures Appl. {\bf 58}, (1979), 137--151.

\bibitem{BrNir}
H. Brezis, L. Nirenberg,
{\em Positive solutions of nonlinear elliptic equations involving critical Sobolev exponents}, 
Comm. Pure Appl. Math. \textbf{36} (1983), no. 4, 437--477.

%\bibitem{BySo}
%S.-S. Byun, K. Song, 
%{\em Mixed local and nonlocal equations with measure data},
%Calc. Var. Partial Differential Equations  {\bf 62}, (2023), Article 14.

%\bibitem{CDV22}
%X. Cabr\'e, S. Dipierro, E. Valdinoci, 
%{\em The Bernstein Technique for Integro-Differential
%Equations}, Arch. Rational Mech. Anal. {\bf 243}, (2022), 1597--1652.

\bibitem{CaGiSp}
L.A. Caffarelli, B. Gidas, J. Spruck, 
{\em Asymptotic symmetry and local behavior of
semilinear elliptic equations with critical Sobolev growth}, 
Comm. Pure Appl. Math. {\bf 42}(3), (1989), 271--297.

\bibitem{Cancelier}
C. Cancelier, 
{\em Problèmes aux limites pseudo-différentiels donnant lieu au principe du maximum},
Comm. Partial Differential Equations {\bf 11}(15), (1986), 1677--1726.
%

\bibitem{CCMV}
N. Cangiotti, M. Caponi, A. Maione, E. Vitillaro, 
{\em Schr\"{o}dinger-Maxwell equations driven by mixed local-nonlocal operators}, 
Fract. Calc. Appl. Anal. {\bf 27}, (2024), 677--705.


\bibitem{CKSV}
 Z.-Q. Chen, P. Kim, R. Song, Z. Vondra\v{c}ek, 
 {\em Boundary Harnack principle for $\Delta + \Delta^{\alpha/2}$}, 
 Trans. Amer. Math. Soc. {\bf 364}(8), (2012), 4169--4205. 

\bibitem{COURR}
	P. Courr\`ege, \emph{G\'en\'erateur infinit\'esimal d'un semi-groupe de convolution sur $\R^n$, et formule de L\'evy-Khinchine},
	Bull. Sci. Math. (2) {\bf 88}, (1964), 3--30.

\bibitem{DPPQ}
L.M. Del Pezzo, B. Pichucho, A. Quaas,
{\em The Serrin exponent for Lane-Emden inequalities driven by sums of fractional $p$-Laplacians}, preprint. \url{https://arxiv.org/abs/2610.30415}

 \bibitem{DeFMin}
 C. De Filippis, G. Mingione, 
 {\em Gradient regularity in mixed local and nonlocal problems}, 
 Math. Ann. {\bf 388}, (2024), 261--328.


%

%\bibitem{daSilva}
%J.V. da Silva, A. Fiscella, V.A. Blanco Viloria,
%{\em Mixed local-nonlocal quasilinear problems with critical nonlinearities},
%J. Differential Equations  {\bf 408}, (2024), 494--536.

\bibitem{DPV}
E. Di Nezza, G. Palatucci, E. Valdinoci,
{\em Hitchhiker's guide to the fractional Sobolev spaces}, 
Bull. Sci. Math. {\bf 136}, (2012), 521--573.


%\bibitem{DiFaZh}
%M. Ding, Y. Fang, C. Zhang, C. 
%{\em Local behavior of the mixed local and nonlocal problems with nonstandard growth}, 
%J. London Math. Soc. {\bf 109}(6), (2024), Article e12947.

%\bibitem{DPLV1}
%S. Dipierro, E. Proietti Lippi, E. Valdinoci, 
%{\em Linear theory for a mixed operator with Neumann conditions}, Asymptotic Analysis {\bf 128}(4), (2022), 571--594.

\bibitem{DPLV2}
S. Dipierro, E. Proietti Lippi, E. Valdinoci, 
{\em (Non)local logistic equations with Neumann conditions}, Ann. Inst. H. Poincaré Anal. Non Linéaire (2022), 
DOI: 10.4171/AIHPC/57.

\bibitem{RE2}
S. Dipierro, K. Perera, C. Sportelli, E. Valdinoci, \emph{An existence theory for superposition operators of mixed order subject to jumping nonlinearities}, Nonlinearity {\bf{37}}, (2024), no. 5, Paper
No. 055018, 27 pp. 

\bibitem{RE22}
S. Dipierro, K. Perera, C. Sportelli, E. Valdinoci, \emph{An existence theory for nonlinear superposition operators of mixed fractional order}, Commun. Contemp. Math. {\bf{27}}, (2025), no. 8, Paper No. 55000, 39 pp.


\bibitem{RE5}
S. Dipierro, E.  Proietti Lippi, C. Sportelli, E. Valdinoci, \emph{A general theory for the
$(s, p)$-superposition of nonlinear fractional operators}, Nonlinear Anal. Real World Appl. {\bf 82}, (2025), Paper
No. 104251, 24 pp.


\bibitem{RE6}
S. Dipierro, E. Proietti Lippi, C. Sportelli, E. Valdinoci, \emph{
Logistic diffusion equations governed by the superposition of operators of mixed fractional order}, 
Ann. Mat. Pura Appl. (4) {\bf 205}, (2026), no. 2, 539--589.


\bibitem{RE7}
S. Dipierro, E. Proietti Lippi, C. Sportelli, E. Valdinoci, \emph{Maximum principles and
spectral analysis for the superposition of operators of fractional order}, 
Matematica {\bf 5} (2026), no. 2, Paper No. 35, 31 pp.


\bibitem{emBE}
S. Dipierro, E. Proietti Lippi, C. Sportelli, E. Valdinoci, 
{\em Optimal embedding results for fractional Sobolev spaces},
Math. Z. {\bf 313}, (2026), no. 3, Paper No. 49.

\bibitem{RE8}
S. Dipierro, E. Proietti Lippi, C. Sportelli, E. Valdinoci, \emph{Nonlocal eigenvalue
problems and superposition operators}, 
Discrete Contin. Dyn. Syst. {\bf 58}, (2027), 417--448.

\bibitem{DV-niche}
S. Dipierro, E. Valdinoci,
{\em Description of an ecological niche for a mixed local/nonlocal dispersal: an evolution equation and a new Neumann condition arising from the superposition of Brownian and L\'{e}vy processes},
Phys. A. {\bf 575}, (2021), 126052.


%\bibitem{LLAVE09} 
%R. de la Llave, E. Valdinoci, \emph{A generalization of Aubry-Mather theory to partial differential equations and pseudo-differential equations}, Ann. Inst. H. Poincar\'e C Anal. Non Lin\'eaire {\bf 26}, (2009), no. 4, 1309--1344.
	
	
\bibitem{FerrariVerbitsky}
F.~Ferrari and I.~E.~Verbitsky,
\emph{Radial fractional Laplace operators and Hessian inequalities},
J. Differential Equations \textbf{253}(1), (2012), 244--272.	

\bibitem{GarainKinnunen}
P. Garain, J. Kinnunen,
{\em On the regularity theory for mixed local and nonlocal quasilinear elliptic equations}, 
Trans. Amer. Math. Soc. {\bf 375}(8), (2022), 5393--5423.

\bibitem{GarainLindgren}
P. Garain, E. Lindgren, 
{\em Higher Hölder regularity for mixed local and nonlocal degenerate elliptic equations}, 
Calc. Var. Partial Differ. Equ.{\bf 62}, (2023), Article 67.

%\bibitem{GARR} 
%M. G. Garroni, J. L. Menaldi, {\em
%Second order elliptic integro-differential problems},
%Chapman \& Hall/CRC Research Notes in Mathematics,  Boca Raton, FL, 2002. xvi+221 pp.

\bibitem{GidasSpruck}
B. Gidas, J. Spruck, 
{\em Global and local behavior of positive solutions of nonlinear
elliptic equations}, 
Comm. Pure Appl. Math. {\bf 34}(4), (1981), 525--598.

\bibitem{GMV}
G. Giovannardi, D. Mugnai, E. Vecchi,
{\em An Ahmad-Lazer-Paul-type result for indefinite mixed local-nonlocal problems}, 
J. Math. Anal. Appl. {\bf 527}(2), (2023), 127442.


\bibitem{GuoLiXie}
Y. Guo, C. Li, J. Xie,
{\em Liouville Theorems for the Lane-Emden Equation Involving a Mixed Local-Nonlocal Operator}, preprint.
\url{https://arxiv.org/abs/2606.16940}

%\bibitem{JA06}
%	E. R. Jakobsen, K. H. Karlsen, \emph{A ``maximum principle for semicontinuous functions'' applicable to integro-partial differential equations},
%	NoDEA Nonlinear Differential Equations Appl. {\bf 13}, (2006), no. 2, 137--165.

\bibitem{MMV} 
A. Maione, D. Mugnai, E. Vecchi,
{\em Variational methods for nonpositive mixed local--nonlocal operators}, 
Fract. Calc. Appl. Anal. {\bf 26}, (2023), 943--961.

\bibitem{MMR}
D. Mazzoleni, R. Moraschi, B. Ruffini,
{\em Existence and Regularity in the Small-Mass Regime for a Hartree--Ohta-Kawasaki Shape Optimization Problem}, preprint.
\url{https://arxiv.org/abs/2604.13895}

\bibitem{MiPo}
E. Mitidieri, S.I. Pohozaev,
{\em Towards a Unified Approach to Nonexistence of Solutions for a Class of Differential Inequalities}, 
Milan J. Math. {\bf 72}, (2004), 129--162.

%\bibitem{LeoniSob}
%G. Leoni,
%{\em A First Course in Sobolev Spaces},
%Grad. Stud. Math. 105,
%American Mathematical Society, Providence, RI, 2009, xvi+607 pp.

%%\bibitem{LeoniFract}
%%G. Leoni,
%%\emph{A First Course in Fractional Sobolev Spaces}, 
%%Grad. Stud. Math. 229,
%%American Mathematical Society, Providence, RI, 2023, xv+586 pp.

\bibitem{PESP:2024}
K. Perera, C. Sportelli, \emph{A multiplicity result for critical elliptic problems involving differences of local and nonlocal operators}, Topol. Methods Nonlinear Anal. {\bf 63}, (2024), no. 2, 717--731.

\bibitem{RSPoho}
X. Ros-Oton,  J. Serra,
{\em Nonexistence Results for Nonlocal Equations with Critical and Supercritical Nonlinearities},
Comm. Partial Differential Equations \textbf{40} (2015), no. 1, 115--133.

%\bibitem{RE10}
%E. Proietti Lippi, C. Sportelli, \emph{Ground state solution for the Choquard equation under the superposition of operators of mixed fractional order}, Fract. Calc. Appl. Anal. {\bf 29}, (2026), no. 2, 708--742.

%\bibitem{SeVa}
%R. Servadei, E. Valdinoci,
%{\em The Brezis-Nirenberg result for the fractional Laplacian}, 
%Trans. Amer. Math. Soc. {\bf 367}(1), (2015), 67--102.

%\bibitem{SILVE06} 
%L. Silvestre, \emph{H\"older estimates for solutions of integro-differential equations like the fractional Laplace},
%	Indiana Univ. Math. J. {\bf 55}, (2006), no. 3, 1155--1174.

%\bibitem{Stampacchia}
%G. Stampacchia,
%{\em Le probl\`{e}me de Dirichlet pour les \'{e}quations elliptiques du second ordre \`{a} coefficients discontinus}, 
%Ann. Inst. Fourier (Grenoble) {\bf 15}, (1965), 189--258.

\bibitem{SVWZ} 
X. Su, E. Valdinoci, Y. Wei, J. Zhang,
{\em Regularity results for solutions of mixed local and nonlocal elliptic equations}, Math. Z. {\bf 302}, (2022), 1855--1878.

\bibitem{SVWZ2} 
X. Su, E. Valdinoci, Y. Wei, J. Zhang,
{\em On Some Regularity Properties of Mixed Local and Nonlocal Elliptic Equations}, 
J. Differential Equations {\bf 416}, (2025), 576--613.

%\bibitem{Talenti}
%G. Talenti, 
%{\em Elliptic equations and rearrangements}, 
%Ann. Scuola Norm. Sup. Pisa Cl. Sci. {\bf 3}, (1976), 697--718.

%\bibitem{Talenti2}
%G. Talenti, 
%{\em Best constant in Sobolev inequality}, 
%Ann. Mat. Pura Appl. {\bf 110}, (1976), 353--372.

%\bibitem{Talenti3}
%G. Talenti, Inequalities in rearrangement invariant function spaces, in {\it Nonlinear analysis, function spaces and applications, Vol.\ 5 (Prague, 1994)}, 177--230, Prometheus, Prague.

 
 \end{thebibliography}
\end{document}